\title[Rational homology of spaces of embeddings]{On the rational homology of high dimensional analogues of spaces of long knots}
\author{Gregory Arone}
\address{Department of Mathematics \\ University of Virginia \\ Charlottesville, VA 22904 \\ USA}
\email{zga2m@virginia.edu}
\thanks{Both authors gratefully acknowledge NSF support via collaborative grant
DMS 0967649 and via Midwest Topology Network grant DMS 0844249}
\thanks{The first author was supported by the Fulbright fellowship}
\author{Victor Turchin}
\address{Department of Mathematics\\ Kansas State University \\ Manhattan, KS 66506 \\ USA}
\email{turchin@ksu.edu}
\subjclass[2010]{57R70, 18D50, 18G55}
\date{}                                           
\newcommand{\calA}{\mathcal{A}}
\newcommand{\calC}{\mathcal{C}}
\newcommand{\calD}{\mathcal{D}}
\newcommand{\calO}{\mathcal{O}}
\newcommand{\calF}{\mathcal{F}}
\newcommand{\calP}{\mathcal{P}}
\newcommand{\balls}{\mathtt{B}}
\newcommand{\R}{{\mathbb R}}
\newcommand{\Z}{{\mathbb Z}}
\newcommand{\Q}{{\mathbb Q}}
\newcommand{\Koszul}{\operatorname{K}}
\newcommand{\holim}{\operatorname{holim}\,}
\newcommand{\hocolim}{\operatorname{hocolim}\,}
\newcommand{\sEmb}{\operatorname{sEmb}}
\newcommand{\Emb}{\operatorname{Emb}}
\newcommand{\Ebar}{\overline{\Emb}}
\newcommand{\Ebarc}{\Ebar_{\mathrm{c}}}
\newcommand{\Imm}{\operatorname{Imm}}
\newcommand{\Top}{\operatorname{Top}}
\newcommand{\one}{{\mathtt 1}}
\newcommand{\Com}{{\mathrm{Com}}}
\newcommand{\Epi}{\Omega}
\newcommand{\sur}{\operatorname{Sur}}
\newcommand{\map}{\operatorname{Map}}
\newcommand{\ET}{\operatorname{T}}
\newcommand{\hRmod}{\operatorname{hRmod}}
\newcommand{\hWBimod}{\operatorname{hInfBim}}
\newcommand{\chains}{\mathtt{C}}
\newcommand{\ce}{\operatorname{cr}}
\newcommand{\Ch}{\mathrm{Ch}}
\newcommand{\inj}{\operatorname{Inj}}
\newcommand{\Config}[2]{\mathrm{C}(#1, #2)}
\newcommand{\tildeHH}{\widetilde{\HH}}
\newcommand{\HH}{\operatorname{H}}
\newcommand{\ind}{\operatorname{ind}}
\newcommand{\st}{{\mathrm{st}}}
\newcommand{\const}{{\mathrm{c}}}
\newcommand{\mfld}{{\mathcal{M}}}
\newcommand{\smooth}{\mathrm{sm}}
\newcommand{\framed}{\mathrm{fr}}
\newcommand{\Po}{\operatorname{Po}}
\newcommand{\Ebarmn}{{\overline{\mathrm{Emb}}}_c(\R^m,\R^n)}
\numberwithin{equation}{section}
\theoremstyle{plain}
\newtheorem{theorem}{Theorem}[section]
\newtheorem{proposition}[theorem]{Proposition}
\newtheorem{lemma}[theorem]{Lemma}
\newtheorem{corollary}[theorem]{Corollary}
\theoremstyle{definition}
\newtheorem{definition}[theorem]{Definition}
\newtheorem{example}[theorem]{Example}
\newtheorem{examples}[theorem]{Examples}
\theoremstyle{remark}
\newtheorem{remark}[theorem]{Remark}
\newtheorem*{remark*}{Remark}
\newtheorem*{remarks*}{Remarks}
\begin{document}

\begin{abstract}
We study high-dimensional analogues of spaces of long knots. These are spaces of compactly-supported embeddings (modulo immersions) of $\R^m$ into $\R^n$.  We view the space of embeddings as the value of a certain functor at $\R^m$, and we apply manifold calculus to this functor. Our first result says that the Taylor tower of this functor can be expressed as the space of maps between infinitesimal bimodules over the little disks operad. We then show that the formality of the little disks operad has implications for the homological behavior of the Taylor tower. Our second result says that when $2m+1<n$, the singular chain complex of these spaces of embeddings is rationally equivalent to a direct sum of certain finite chain complexes, which we describe rather explicitly. 
\end{abstract}

\maketitle

\tableofcontents

\section*{Introduction}
Let $\R^n$ be a Euclidean space and $\R^m\subset \R^n$ a subspace. Let $\Emb_\const(\R^m, \R^n)$ be the space of smooth embeddings of $\R^m$ into $\R^n$ that agree with the inclusion outside a bounded set. When $m=1$, this space is sometimes called the space of long knots in $\R^n$.
%

Similarly, let $\Imm_\const(\R^m, \R^n)$ be the space of smooth immersions of $\R^m$ into $\R^n$ that agree with the inclusion outside a bounded set. It follows from the Smale-Hirsh theory \cite{Hirsch} that there is a homotopy equivalence (assuming that $n\ge m+1$) $\Imm_\const(\R^m, \R^n)\simeq \Omega^m \inj(\R^m, \R^n)$ where $\Omega^m$ denotes $m$-fold loop space, and $\inj(\R^m, \R^n)$ is the Stiefel manifold of linear isometric injections of $\R^m$ into $\R^n$. Thus, the homotopy type of $\Imm_\const(\R^m, \R^n)$ is (in some sense) well-understood, and we view it as ``the easy part of $\Emb_\const(\R^m, \R^n)$''.

Let $\Ebarc(\R^m, \R^n)$ be the homotopy fiber of the inclusion map $\Emb_\const(\R^m, \R^n)\longrightarrow \Imm_\const(\R^m, \R^n)$. This space is the subject of this paper.
We view the space $\Ebarc(\R^m, \R^n)$ as a value of a contravariant functor
$$U\mapsto \Ebarc(U, \R^n),$$
where $U$ ranges over a certain category of open subsets of $\R^m$. Our first goal is to describe the Taylor tower of this functor in terms of operads. 
Our first theorem says that the Taylor tower can be described as a mapping space between {\it infinitesimal bimodules} over the little disks operad $\balls_m$. The definition of infinitesimal bimodules is reviewed in Section~\ref{section: operads modules}. Every operad is an infinitesimal bimodule over itself. The inclusion $\R^m\hookrightarrow \R^n$ induces a map of operads $\balls_m\longrightarrow \balls_n$, and in particular it endows $\balls_n$ with the structure of an infinitesimal bimodule over $\balls_m$. The following is our first main result (it is contained in Theorem~\ref{theorem: operadic}):
\begin{theorem}\label{theorem: first main}
There is an equivalence
$$\ET_\infty\Ebar_\const(\R^m, \R^n)\simeq \underset{\balls_m}{\hWBimod}(\balls_m, \balls_n).$$
\end{theorem}
Here $\hWBimod(-, -)$ denotes the derived mapping space between two infinitesimal bimodules over an operad. $\ET_\infty \Ebar_\const(\R^m, \R^n)$ is the limit of the Taylor tower of the functor $U\mapsto \Ebar_\const(U, \R^n)$. Taylor tower is taken in the sense of manifold calculus, or embedding calculus. More precisely, we need a variation of manifold calculus designed for functors with bounded support. It is similar to calculus of manifolds with boundary, developed in~\cite[Section 10]{WeissEmb} and~\cite[Section 9]{DeBrito-Weiss}. 

A deep convergence result of Goodwillie, Klein and Weiss implies that the natural map $$\Ebar_\const(\R^m, \R^n)\longrightarrow \ET_\infty \Ebar_\const(\R^m, \R^n)$$ is a weak homotopy equivalence if $m+3 \le n$. Combining this with Theorem~\ref{theorem: first main}, we conclude that the following natural map is an equivalence when $m+3\le n$
$$\Ebar_\const(\R^m, \R^n)\longrightarrow  \underset{\balls_m}{\hWBimod}(\balls_m, \balls_n).$$

The case $m=1$ of Theorem~\ref{theorem: first main} is related to Sinha's cosimplicial model for the space of long knots~\cite{Sinha}. Indeed, the homotopy totalization of Sinha's cosimplicial space can be interpreted as the mapping space 
$$\underset{\operatorname{Ass}}{\hWBimod}(\operatorname{Ass}, \widetilde\balls_n)$$
where $\operatorname{Ass}$ is the associative operad, which is equivalent to $\balls_1$, and $\widetilde\balls_n$ is an operad equivalent to $\balls_n$ that admits a map from the associative operad.

We also prove (or reprove) an analogous, but simpler statement about the homotopy fiber of the inclusion $\Emb(M, \R^n)\longrightarrow \Imm(M, \R^n)$, where $\Emb(M, \R^n)$ and $\Imm(M, \R^n)$ are spaces of all smooth embeddings and immersions respectively. We denote this homotopy fiber by $\Ebar(M, \R^n)$. We describe the Taylor tower of $\Ebar(M, \R^n)$ in the case when $M$ is an open subset of $\R^m$.  We show that the Taylor tower is equivalent to the space of maps between {\it right} modules over the operad $\balls_m$ (Theorem~\ref{theorem: operadic}).

\begin{remark} \label{rem: difference} In the first draft of the paper we proved a different version of Theorems~\ref{theorem: first main} and~\ref{theorem: operadic}. That version was based on the discretization of the little disks operad. The current form of these theorems was stated in the original version as a conjecture. It was suggested by one of the referees that we prove the theorems in their current form. 

In the meantime, a proof of the theorems in their current form was given by the second author in~\cite{Turchin13} by a different approach. In particular, that paper uses the Fulton-McPherson operad instead of the little disks operad. 

Also in the meantime, a result similar to the part of Theorem~\ref{theorem: operadic} that deals with right modules was proved by de Brito and Weiss in~\cite{DeBrito-Weiss}. Similar results were also obtained by D. Pryor in his Ph.D. thesis~\cite{Pryor}. The paper of de Brito - Weiss influenced the current version of this paper. Our proof strategy is similar to theirs. But we note that our statement and theirs differ somewhat. On one hand, de Brito and Weiss prove a more general statement than ours about the Taylor tower of $\Emb(M, \R^n)$, in that it applies to all manifolds $M$, and not just open subsets of $\R^m$. The ``price'' for this is that their result is stated in terms of modules over the {\it framed} little disks operads, while we work with the unframed version. For our purposes it is important to use the unframed disks operad.

Readers who are interested in the discretized version of the theorems can find them in the first version of this paper on Arxiv.
\end{remark}

There is a homological analogue of Theorem~\ref{theorem: first main}. Let $\chains(X)$ denote the normalized singular chain complex of $X$. We view $\chains(\Ebar_\const(\R^m, \R^n))$ as a value of a functor $U\mapsto \chains(\Ebar_\const(U, \R^n))$. The homological analogue of Theorem~\ref{theorem: first main} says that there is an equivalence
\begin{equation}\label{eq: homological}
\ET_\infty \chains\left(\Ebar_\const(\R^m, \R^n)\right) \simeq  \underset{\chains(\balls_m)}{\hWBimod}(\chains(\balls_m), \chains(\balls_n)).
\end{equation}
The proof of this assertion is essentially the same as of Theorem~\ref{theorem: first main}. However, the homological functor has considerably weaker convergence properties. The natural map
$$\chains\left(\Ebar_\const(\R^m, \R^n)\right) \longrightarrow \ET_\infty \chains\left(\Ebar_\const(\R^m, \R^n)\right) $$
is only known to be an equivalence when $2m+1<n$ \cite{WeissHomol}.

Our next step is to use the formality of the little disks operad (\cite{Kontsevich, LV}) to simplify the description of the homological Taylor tower over the reals (and therefore also over the rationals). This is similar in spirit to what was done in~\cite{LTV} for spaces of long knots and in~\cite{ALV} for the spaces $\Ebar(M, \R^n)$. 
 The formality theorem implies that if one works over $\R$, then the infinitesimal bimodule $\chains(\balls_n)$ in~\eqref{eq: homological} can be replaced with $\HH(\balls_n; \R)$. Here the action of $\chains(\balls_m)$ on $\HH(\balls_n)$ factors through the commutative operad. It follows that one can present the right hand side of~\eqref{eq: homological} as a mapping object between infinitesimal bimodules over the commutative operad. It turns out that infinitesimal bimodules over the commutative operad are the same thing as right modules over the category $\Gamma$ of of pointed finite sets (here by ``right modules over'' we mean ``contravariant functors whose domain is'') . Let $\Epi$ be the category of unpointed sets and surjective functions between them. A well-known result of Pirashvili says that there is an equivalence of categories between right $\Gamma$-modules and right $\Epi$-modules~\cite{PirashviliDold}. It follows that the Taylor tower of $\chains\left(\Ebar_\const(\R^m, \R^n)\right)\otimes \R$ can be described as a mapping complex between right $\Epi$-modules. By a counting argument, the same conclusion holds over $\Q$ as over $\R$. In a further simplification, we show that $\Ch$ can be replaced with $\HH$ in the source variable as well as the target. The following is our second main result (Theorem~\ref{theorem: main theorem})
\begin{theorem}\label{theorem: second main}
Assume that $n> 2m+1$. There is a weak equivalence of chain complexes
\begin{equation}\label{eq: homology decomposition}
\chains^\Q(\Ebarc(\R^m, \R^n)) \simeq\underset{\Epi}{\hRmod}\left(\tildeHH(S^{m-}), \hat\HH(\balls_n;\Q)\right).
\end{equation}
\end{theorem}
Here $\chains^\Q(X)=\chains(X)\otimes \Q$, $\tildeHH(S^{m-})$ is the right $\Epi$-module defined by the fomula $n\mapsto \tildeHH(S^{mn})$, and $\hat\HH(\balls_n;\Q)$ is the cross-effect of $\HH(\balls_n; \Q)$ as defined by Pirashvili. $\hRmod$ stands for the derived mapping complex of right modules.

\begin{remark*} If one works integrally, then the equivalence~\eqref{eq: homology decomposition} does not hold, but one still can show that there is a spectral sequence starting with the homology of the right hand side and converging to the homology of the left hand side. We hope to come back to this in another paper.
\end{remark*}

We view Theorem~\ref{theorem: second main} as giving a rather explicit algebraic model for the rational homology of $\Ebarmn$. Let $\HH_j(X)$ be the $j$-th homology group of $X$, considered as a chain complex concentrated in dimension $j$. There are obvious isomorphisms of $\Epi$-modules
$$\hat\HH(\balls_n)\cong \bigoplus_{t=0}^\infty \hat\HH_{(n-1)t}(\balls_n)\cong \prod_{t=0}^\infty \hat\HH_{(n-1)t}(\balls_n)$$
and
$$\widetilde\HH(S^{m-})\cong \bigoplus_{s=0}^\infty \widetilde\HH_{ms}(S^{m-})\cong \prod_{s=0}^\infty  \widetilde\HH_{ms}(S^{m-}).$$
Here we have used the well-known fact that $\hat\HH_j(\balls_n(k))$ is non-zero only if $j$ is a multiple of $n-1$ and the obvious fact that $\widetilde \HH_i(S^{m-})$ is non-zero only if $i$ is a multiple of $m$. It follows that the right hand side of~\eqref{eq: homology decomposition} splits as a product of mapping spaces between right $\Epi$-modules with values in chain complexes concentrated in a single degree. More precisely, there is a weak equivalence of chain complexes
\begin{equation}\label{eq: basic decomposition for homology}
\underset{\Epi}{\hRmod}\left(\tildeHH(S^{m-}), \hat\HH(\balls_n;\Q)\right)\simeq \prod_{s,t\ge 0} \underset{\Epi}{\hRmod}\left(\tildeHH_{ms}(S^{m-}), \hat\HH_{(n-1)t}(\balls_n;\Q)\right).\end{equation}

In fact, one can show that if $n>2m+1$ then the direct product on the right side can be replaced with direct sum (Remark~\ref{remark: relevant degrees}). 
Also, the decomposition can be reduced even further. For example, we show that the terms on the right hand side of~\eqref{eq: basic decomposition for homology} are
 non-zero only if $s\le 2t$.

Isomorphism \eqref{eq: basic decomposition for homology} produces a double splitting in the rational homology of $\Ebarc(\R^m, \R^n)$. In the case $m=1$, this double splitting is equivalent to the one studied in~\cite{Turchin10}.

In Sections~\ref{section: Koszul spectral sequence}-\ref{section: Koszul complex} we  analyze the mapping complexes
$$
\underset{\Epi}{\hRmod}\left(\tildeHH_{ms}(S^{m-}), \hat\HH_{(n-1)t}(\balls_n;\Q)\right)$$ by filtering the category $\Epi$ by cardinality. Analysing this filtration leads us to a certain explicit finite complex that is weakly equivalent to this mapping complex. We call it the Koszul complex, because it can be viewed as a kind of Koszul resolution.

In Section~\ref{section: forests} we give an explicit description of the Koszul complex in terms of certain spaces of forests.  It is quite interesting, although not really surprising, that the graph-complexes that we obtain at this point look similar to those obtained from the Bott-Taubes generalized construction used to study the De Rham cohomology of higher dimensional knot spaces~\cite{Catt,Sakai}.

It also is worth noting that the right $\Epi$-modules $\tildeHH(S^{m-})$ and  $\hat\HH(\balls_n;\Q)$ are, up to shifts of degree, almost independent of $m$ and $n$. More precisely, they only depend on the parity of $m$ and $n$. It follows that the total group $\HH(\Ebarc(\R^m, \R^n);\Q)$ depends, in some sense, only on the parities of $m$ and $n$. For all $m$ and $n$ of a fixed parity (and satisfying $n>2m+1$), the total group is built of the same ingredients. But the topological dimensions of the different ingredients depend on $m$ and $n$.

We also prove a companion result to Theorem~\ref{theorem: second main} for spaces of the form $\Ebar(M, \R^n)$ (Proposition~\ref{prop: getting there}). We view it as a reformulation of the main result of~\cite{ALV}
\begin{theorem}\label{theorem: gamma}
Suppose  that $M$ is an open subset of $\R^m$ and $2m+1 < n$. There is an equivalence of chain complexes
$$\chains^\Q(\Ebar(M,\R^n))\simeq \underset{\Com}{\hRmod}\left(\chains^\Q(M^-),\HH\left(\balls_n;\Q\right)\right).$$
\end{theorem}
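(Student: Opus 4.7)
The proof assembles three ingredients that have already been set up in the introduction. The plan is as follows.

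First, I invoke the convergence of the Goodwillie-Weiss embedding tower. Under the hypothesis $n\ge 2m+1$ (in fact $n-m\ge 3$ suffices for rational purposes), the canonical map $\Ebar(M,\R^n)\longrightarrow T_\infty\Ebar(M,\R^n)$ is a weak equivalence, so after applying rational chains I may replace the left-hand side of the claimed equivalence by $T_\infty\chains^\Q(\Ebar(M,\R^n))$. Combining this with the right-module description~\eqref{eq: right module model} (tensored with $\Q$) identifies this Taylor tower with
$$\underset{\chains(\balls_m)}{\hRmod}\bigl(\chains(\sEmb(\bullet,M)),\chains(\balls_n(\bullet));\Q\bigr).$$

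Next, I apply Kontsevich's formality theorem for the little $n$-disks operad, which gives a zig-zag of quasi-isomorphisms of operads in rational chain complexes between $\chains(\balls_n;\Q)$ and $\HH(\balls_n;\Q)$. Following the argument of~\cite{ALV}, this formality equivalence is compatible with the right $\chains(\balls_m)$-module structure coming from the inclusion $\R^m\hookrightarrow\R^n$, where on the target side the $\chains(\balls_m)$-action factors through the augmentation $\chains(\balls_m)\to\Com$. Hence I may replace $\chains(\balls_n(\bullet))$ with $\HH(\balls_n(\bullet);\Q)$ in the derived hom above.

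Finally, I apply Proposition~\ref{prop: change of operads isomorphism} to convert this $\chains(\balls_m)$-module mapping complex into a $\Com$-module mapping complex: the source $\chains(\sEmb(\bullet,M))$ is replaced by $\chains(M^\bullet)$ (reflecting at the $\Com$-module level the fact that, up to homotopy, $\sEmb(i\times D^m,M)\simeq\Config{i}{M}\hookrightarrow M^i$ once the $\balls_m$-action is forgotten), and the target $\HH(\balls_n(\bullet);\Q)$ is retained. Chaining the three equivalences together yields the desired
$$\chains^\Q(\Ebar(M,\R^n))\simeq \underset{\Com}{\hRmod}\bigl(\chains(M^\bullet),\HH(\balls_n(\bullet);\Q)\bigr).$$

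The main obstacle is the second step: making the formality quasi-isomorphism interact correctly with the right-module structure so that one is really entitled to perform the substitution $\chains(\balls_n)\rightsquigarrow\HH(\balls_n;\Q)$ inside the derived hom while keeping the $\chains(\balls_m)$-action under control. The embedding-tower convergence is standard, and Proposition~\ref{prop: change of operads isomorphism} is cited, so the delicate content is really in arranging the formality equivalence as a module equivalence over the small operad---this is where the stable range hypothesis $n\ge 2m+1$ enters in an essential way (to guarantee that the inclusion $\balls_m\hookrightarrow\balls_n$ admits a compatible formality enhancement).
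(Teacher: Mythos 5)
Your proposal follows the paper's intended route exactly: Taylor tower convergence, Kontsevich relative formality (replacing $\chains(\balls_n)$ by $\HH(\balls_n;\Q)$ as a module over the smaller operad, which is where $n\ge 2m+1$ enters via the Lambrechts--Voli\'c relative formality theorem), and then the change-of-operads step of Proposition~\ref{prop: change of operads isomorphism}. The one technical caveat, which the paper itself flags right after stating~\eqref{eq: right module model}, is that the right-module model over the topological operad $\balls_m$ is only stated conjecturally; the paper's actual argument runs through the discretized operad ${}^\delta\balls_m$ (Corollary~\ref{cor: back to operads}, Proposition~\ref{prop: first reduction}, and Proposition~\ref{prop: getting there}), so strictly speaking your first step should invoke the discretized model rather than~\eqref{eq: right module model} verbatim.
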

Here $\chains^\Q(M^-)$ is the right $\Com$-module determined by the formula $k\mapsto \chains^\Q(M^k)$. The theorem implies that in high enough codimension, $\HH(\Ebar(M,\R^n); \Q)$ is a homotopy functor of $M$. In particular, Theorem~\ref{theorem: gamma} has the following consequence, which also was proved in~\cite{ALV}.
\begin{corollary}
Let $M_1$, $M_2$ smooth manifolds that embed into $\R^m$. Suppose that $2m+1<n$. Also suppose that $M_1$ and $M_2$ are related by a chain of maps inducing an isomorphism in rational homology. Then there is an isomorphism
$$\HH(\Ebar(M_1,\R^n); \Q)\cong\HH(\Ebar(M_2, \R^n); \Q).$$
\end{corollary}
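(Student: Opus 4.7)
The plan is to deduce the corollary directly from the preceding theorem, which identifies $\chains^\Q(\Ebar(M,\R^n))$ with the derived $\Com$-module mapping complex $\underset{\Com}{\hRmod}\left(\chains(M^\bullet),\HH\left(\balls_n(\bullet);\Q\right)\right)$. Since the target $\HH(\balls_n(\bullet);\Q)$ does not depend on $M$, all that is needed is homotopy-invariance of this mapping complex in its first variable with respect to objectwise rational quasi-isomorphisms of right $\Com$-modules.

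First I would reduce to the case of a single map $f\colon M_1\to M_2$ inducing an isomorphism on $\HH(-;\Q)$, since a general zig-zag connecting $M_1$ to $M_2$ is handled by composing the resulting isomorphisms (and reversing those of backwards-pointing arrows, which will also be quasi-isomorphisms by the same argument applied in the opposite direction). The assignment $M\mapsto M^\bullet$ is functorial in $M$, and the $\Com$-module structure maps (diagonals, projections, and permutations of factors) are natural in $M$; hence $f$ induces a map of right $\Com$-modules in rational chain complexes
\[
f_*\colon \chains^\Q(M_1^\bullet)\longrightarrow \chains^\Q(M_2^\bullet).
\]

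Next I would verify that $f_*$ is an objectwise quasi-isomorphism. By the Künneth theorem over the field $\Q$, there is a natural equivalence $\chains^\Q(M^i)\simeq \chains^\Q(M)^{\otimes i}$, and the tensor product over $\Q$ of quasi-isomorphisms of chain complexes is again a quasi-isomorphism. Therefore $(f^i)_*$ is a quasi-isomorphism for every $i\ge 0$, which means $f_*$ is a weak equivalence in the projective (objectwise) model structure on right $\Com$-modules in chain complexes over $\Q$.

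Finally, apply the functor $\underset{\Com}{\hRmod}(-,\HH(\balls_n(\bullet);\Q))$. Because this is a derived mapping complex, computed as the strict hom from a cofibrant replacement of the source to a fibrant replacement of the target, it sends objectwise weak equivalences in the source to weak equivalences of chain complexes. Combining with the theorem, we obtain a zig-zag of quasi-isomorphisms $\chains^\Q(\Ebar(M_1,\R^n))\simeq \chains^\Q(\Ebar(M_2,\R^n))$, and passing to homology yields the stated isomorphism $\HH(\Ebar(M_1,\R^n);\Q)\cong \HH(\Ebar(M_2,\R^n);\Q)$. The only delicate point is the homotopy-invariance of $\hRmod$, but this is a formal consequence of having a model structure with objectwise weak equivalences on right $\Com$-modules — all the genuine content (Kontsevich formality and the description of the Taylor tower as a mapping complex of modules) has already been absorbed into the preceding theorem.
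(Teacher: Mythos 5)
Your proposal is correct and follows essentially the same route as the paper's proof: both rely on the preceding theorem to reduce the claim to showing that $\chains^\Q(M_1^\bullet)$ and $\chains^\Q(M_2^\bullet)$ are weakly equivalent right $\Com$-modules, and then invoke the homotopy invariance of $\hRmod$. The paper states this in two sentences; you have simply filled in the Künneth and model-categorical details, which are correct.
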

\begin{proof}
Replacing $M_1$ and $M_2$ with tubular neighborhoods, we may assume that they are open subsets of $\R^m$. We can do it because inclusion into a tubular neighborhood induces an equivalence on $\Ebar(-, \R^n)$. If $M_1$ and $M_2$ are related by a chain of real homology equivalences, then the right $\Com$-modules $\chains^\Q(M_1^-)$ and $\chains^\Q(M_2^-)$ are weakly equivalent (in the category of right $\Com$-modules with values in rational chain complexes). By the theorem, the homotopy type of the right $\Com$-module $\chains^\Q(M^-)$ determines $\HH(\Ebar(M,\R^n); \Q)$.
\end{proof}
\subsubsection*{A section by section outline of the paper} In Section~\ref{section: complexes prelims} we review some preliminaries about chain complexes. The main goal of this section is to prove that a certain explicit model for Postnikov sections in the category of chain complexes is an enriched functor. This is used later in Section~\ref{section: applying formality}. In Sections~\ref{section: operads modules} and~\ref{section: operads as categories} we recall some basics about operads and their modules. In particular, we review the concept of an infinitesimal bimodule over an operad, which is perhaps not very well-known. We show that the category of right modules over an operad, as well as the category of infinitesimal bimodules, are equivalent to certain categories of diagrams. In Sections~\ref{section: little disks} we apply the general theory to obtain a convenient description of the categories of right modules and of infinitesimal bimodules over the little disks operad. In Section~\ref{section: module maps} we show how the Taylor towers for  $\chains(\Ebar(M, \R^n))$ and $\chains(\Ebarmn)$ can be described in terms of maps between, respectively, right modules and infinitesimal bimodules over the little disks operad. In Section~\ref{section: applying formality} we start working over the reals/rationals, and we show how Kontsevich's theorem on the formality of the little disks operad can be used to drastically simplify our models for $\chains^\Q(\Emb(M, \R^n))$ and $\chains^\Q(\Ebarmn)$. In Section~\ref{section: change of operads} we show that the model for $\chains^\Q(\Ebarmn)$ can be simplified even further, and rewritten in terms of maps between right $\Epi$-modules. This enables us to prove Theorem~\ref{theorem: main theorem}.

In the easy Section~\ref{section: splitting} we derive the equivalence~\eqref{eq: basic decomposition for homology}. In Section~\ref{section: Koszul spectral sequence} we show how filtering the category $\Epi$ by cardinality gives rise to a spectral sequence (which we call the Koszul spectral sequence) for calculating the homology groups of the complex of maps between right $\Epi$-modules. After some preparatory work in Sections~\ref{section: Koszul dual} and~\ref{section: configurations spaces revisited} we show,  in Section~\ref{section: Koszul complex}, that in the cases of interest to us the first term of the Koszul spectral sequence consists of a single chain complex. We call these chain complexes Koszul complexes. Thus when $2m+2\le n$ the chain complex $\chains^\Q(\Ebarmn)$ is equivalent to a direct sum of Koszul complexes. We describe the Koszul complexes explicitly in Section~\ref{section: forests} as complexes generated by certain types of forests.
\subsubsection*{Acknowledgements} We would like to warmly and sincerely thank the referees,  who undoubtedly invested much time and effort in reading the paper. They made a number of valuable suggestions, spread over several reports, that led to significant improvements (see, for example, Remark~\ref{rem: difference} above). 


The second author is grateful to the University of Virgina for its hospitality
during several visits that he made while working on this project.

\section{Chain complexes and Postnikov sections}\label{section: complexes prelims}
In this paper, $\Ch$ denotes the category of non-negatively graded chain complexes. Sometimes we need to consider the category of complexes of modules over a commutative ring with unit. When we want to emphasize the ground ring, we write $\Ch^R$ for the category of chain complexes of $R$-modules. 

It is well-known that $\Ch$ is a closed symmetric monoidal category, and it also has a Quillen model structure compatible with the monoidal structure (the projective model structure). If $A$ and $B$ are objects of $\Ch$, then by $\hom(A, B)$ we denote the internal mapping object from $A$ to $B$. Thus $\hom(A, B)$ is itself an object of $\Ch$. In degree zero, $\hom(A, B)$ is the group of chain homomorphisms from $A$ to $B$, and $\HH_0(\hom(A, B))$ is the group of chain homotopy classes of chain homomorphisms.

Let $\chains\colon \Top\longrightarrow \Ch$ be the normalized singular chains  functor. 
It is well-known that $\chains$ is a lax symmetric monoidal functor (the original reference is probably~\cite[Theorem 5.4]{EM}). The functor $\chains$ defines a tensoring and cotensoring of the category $\chains$ over $\Top$. Thus if $X$ is a space and $A$ is a chain complex, we define $A\otimes X:=A\otimes \chains(X)$ and $\map(X, A)=\hom(\chains(X), A)$. 

\subsubsection*{Postnikov sections} We will use a small model for Postnikov sections in $\Ch$, that was also used in \cite{ALV}. 
\begin{definition}
Let $P$ be a chain complex. Define $\Po_n(P)$ to be the following chain complex
$$\Po_n(P)_i=\left\{
\begin{array}{cc} 
P_i & i\le n \\
d(P_{n+1}) & i = n+1 \\
0 & i> n+1
\end{array}\right.$$
Here $d(P_{n+1})$ denotes the $n$-dimensional boundaries in $P$. The differential in $\Po_n(P)$ is the same as in $P$ in dimensions $\le n$ and is given by the inclusion $d(P_{n+1})\hookrightarrow P_{n}$ in dimension $n+1$.
\end{definition}
There is an obvious chain homomorphism $P \longrightarrow \Po_n(P)$, natural in $P$. It induces an isomorphism on $\HH_i$ for $i\le n$ (and it is a chain level isomorphism in dimensions $i<n$). On the other hand $\HH_i(\Po_n(P))=0$ for $i>n$. Thus $\Po_n(P)$ is a model for the $n$-th Postnikov section of $P$. Clearly there are compatible chain homomorphism $\Po_n(P)\longrightarrow \Po_{n-1}(P)$, inducing isomorphism on homology in dimensions $<n$. These homomorphisms are fibrations in $\Ch$ (because they are surjective). Together, they give a model for the Postnikov tower of $P$. Let $k_n(P)=\ker(\Po_n(P) \longrightarrow \Po_{n-1}(P))$. $k_n(P)$ is a chain complex concentrated in degrees $n$ and $n+1$. The unique non-trivial boundary homomorphism in $k_n(P)$ is the inclusion of the $n$-dimensional boundaries of $P$ into the cycles. Finally, let $\HH_n(P)$ be the $n$-th homology of $P$, considered as a chain complex concentrated in dimension $n$. We have a zig-zag of chain homomorphisms, natural in $P$.
$$P\longrightarrow \Po_n(P) \longleftarrow k_n(P) \stackrel{\simeq}{\longrightarrow}\HH_n(P).$$
We need the following simple observation about these functors. It is equivalent to saying that all these functors are {\em enriched} over $\Ch$, and the natural transformations between them are enriched transformations. 
\begin{lemma}\label{lemma: postnikov enriched}
Let $X$, $P$ and $Q$ be non-negatively graded chain complexes. Suppose we have a chain homomorphism $X\otimes P \longrightarrow Q$. Then there are unique vertical homomorphisms (determined by the first one) that make the following diagram commute
$$\begin{array}{ccccccc}
X\otimes P& \longrightarrow &X\otimes \Po_n(P) &\longleftarrow &X\otimes k_n(P)& {\longrightarrow}& X\otimes\HH_n(P) \\
\downarrow & & \downarrow & & \downarrow & & \downarrow \\
Q& \longrightarrow &\Po_n(Q) &\longleftarrow &k_n(Q)& {\longrightarrow}& \HH_n(Q)
\end{array}$$
\end{lemma}
\begin{proof}
We begin by constructing a chain map $X\otimes \Po_n(P)\longrightarrow \Po_n(Q)$. The homomorphism $P\longrightarrow \Po_n(P)$ is surjective, and therefore the homomorphism $X\otimes P\longrightarrow X\otimes \Po_n(P)$ is surjective, by right exactness of tensor product. We need to show that the kernel of this homomorphism goes to zero in $\Po_n(Q)$ under the composed map $X\otimes P\longrightarrow Q\longrightarrow \Po_n(Q)$. The map $X\otimes P\longrightarrow X\otimes \Po_n(P)$ is a direct sum of maps of the form $X_i\otimes P_j\longrightarrow X_i\otimes \Po_n(P)_j$, so its kernel is a direct sum of kernels. For $j\le n$, the above map is an isomorphism, so the kernel is trivial. For $j>n+1$, $X_i\otimes P_j$ has dimension $> n+1$, so it goes to zero in $\Po_n(Q)$. It remains to consider summands of the form $X_i\otimes P_{n+1}$. Again, if $i>0$, the summand goes to zero in $\Po_n(Q)$ for dimensional reasons. It remains to consider the summand $X_0\otimes P_{n+1}$. We need to show that any element in the kernel of the homomorphism $X_0 \otimes P_{n+1} \longrightarrow X_0 \otimes d(P_{n+1})$ goes to zero in $\Po_n(Q)$. Any element of  $X_0 \otimes P_{n+1}$ can be written in the form $\Sigma x_0^\alpha \otimes p_{n+1}^\alpha$. The assumption that it is in the kernel means that $\Sigma x_0^\alpha \otimes d(p_{n+1}^\alpha)=0$. But since the elements $x_0^\alpha$ are in dimension zero, it follows that 
$$ d\left(\Sigma x_0^\alpha \otimes p_{n+1}^\alpha\right)= \Sigma x_0^\alpha \otimes d(p_{n+1}^\alpha)=0.$$
So every element of the kernel is an $n+1$-dimensional cycle in $X\otimes P$. It follows that it goes to an $n+1$-cycle in $Q$, and therefore it goes to zero in $\Po_n(Q)$.

Next, we need to show the existence of a unique map $X\otimes k_n(P) \longrightarrow k_n(Q)$ that makes the second square commute. Since the homomorphism $k_n(Q) \longrightarrow \Po_n(Q)$ is a monomorphism, we need to show that the image of the composed homomorphism
\[
X\otimes k_n(P) \longrightarrow X\otimes \Po_n(P) \longrightarrow \Po_n(Q)
\]
is contained in the image of $k_n(Q)$. Since $\Po_n(Q)$ is zero above dimension $n+1$, and $X\otimes k_n(P)$ is zero in dimensions below $n$, we only need to consider dimensions $n$ and $n+1$. The homomorphism $k_n(Q) \longrightarrow \Po_n(Q)$ is an isomorphism in dimension $n+1$, so it remains to check dimension $n$. Let $Z_n(P)$ be the group of $n$-dimensional cycles of $P$. The $n$-dimensional part of $X\otimes k_n(P)$ is $X_0 \otimes Z_n(P)$. We need to show that the image of $X_0\otimes Z_n(P)$ in $\Po_n(Q)$ is contained in the group of cycles $Z_n(Q)$. For this it is enough to show that its image in $X\otimes \Po_n(P)$ consists of cycles. Let $\Sigma x_0^\alpha\otimes p_n^\alpha$ be an element of $X_0\otimes Z_n(P)$. Then its differential in $X\otimes \Po_n(P)$ is $$\Sigma d(x_0^\alpha) \otimes p_n^\alpha + x_0\otimes d(p_n^\alpha)$$
which is clearly zero. So $\Sigma x_0^\alpha\otimes p_n^\alpha$ is a cycle in $X\otimes \Po_n(P)$.

Lastly, we need to show that there is a unique homomorphism $X\otimes \HH_n(P) \longrightarrow \HH_n(Q)$ that makes the rightmost square commute. By right exactness of tensor product, the homomorphism $X\otimes k_n(P) \longrightarrow X\otimes \HH_n(P)$ is surjective, so we need to show that the kernel of this homomorphism goes to zero in $\HH_n(Q)$. Since $\HH_n(Q)$ is concentrated in dimension $n$, we only need to check dimension $n$. The homomorphism from $k_n(P)$ to $\HH_n(P)$ is, in dimension $n$, the surjection from the $n$-cycles of $P$ to the $n$-th homology of $P$. Its kernel is the group of boundaries $d(P_{n+1})$. Using the right exactness of tensor product one more time, we conclude that any element in the kernel of $X\otimes k_n(P) \longrightarrow X\otimes \HH_n(P)$ in dimension $n$ can be written in the form $\Sigma x_0^\alpha \otimes d(p_{n+1}^\alpha)$. By the usual calculation, this is the same as $d\left(\Sigma x_0^\alpha \otimes p_{n+1}^\alpha\right)$, so it is a boundary in $X\otimes k_n(P)$. Therefore it goes to a boundary in $k_n(Q)$ and to zero in $\HH_n(Q)$.
\end{proof}
\begin{corollary}
Let $\calC$ be a category enriched over $\Ch$. Let $F\colon \calC \longrightarrow \Ch$ be an enriched functor. Then there is a chain of enriched natural transformations between enriched functors.
\[
F \longrightarrow \Po_n(F) \longleftarrow k_n(F) \longrightarrow \HH_n(F).
\]
\end{corollary}
\begin{proof}
Let $i, j$ be objects of $\calC$. Let $\calC(i, j)$ be the chain complex of morphisms from $i$ to $j$. To say that $F$ is an enriched functor means that for all $i, j$ there are chain homomorphisms
$$ \calC(i, j) \otimes F(i) \longrightarrow F(j).$$
The homomorphisms are associative and unital. By Lemma~\ref{lemma: postnikov enriched}, there are analogous maps where $F$ is replaced with $\Po_n(F)$, $k_n(F)$ and $\HH_n(F)$, that are compatible with the natural transformations between these functors. The uniqueness part of the lemma guarantees that the induced maps are associative and unital.
\end{proof}
\begin{corollary}\label{cor: formal}
With the same notation as in the previous corollary, suppose that $F$ takes values in chain complexes that are homologically concentrated in degree $n$. Then $F$ is related to $\HH_n(F)$ by a chain of enriched natural equivalences (i.e., quasi-isomorphisms).
\end{corollary}
\begin{proof}
It is clear that if $F(i)$ is a chain complex whose homology is concentrated in dimension $n$, then all the natural transformations
$$F(i)\longrightarrow \Po_n(F(i)) \longleftarrow k_n(F(i)) \longrightarrow \HH_n(F(i))$$
are quasi-isomorphisms.
\end{proof}
In the terminology of~\cite{ALV} we can say that an enriched functor that takes values in complexes of homological dimension $n$ is {\em formal} in a natural and enriched sense.

\section{Operads, modules and infinitesimal bimodules}\label{section: operads modules}
\subsubsection{Definition of operads} In this section we will review the (very well-known) notions of an operad and a right module over an operad. We also will review the concept of an infinitesimal bimodule over an operad, which is less well-known.

For a general introduction to the theory of operads, given from a modern perspective, we suggest the recent book of Loday and Vallette~\cite{LodayVallette}. 

\begin{definition}
Let $\Sigma$ be the category of finite sets and isomorphisms between them. Let $\calC$ be any category. A {\em symmetric sequence} $P$ in $\calC$ is a functor $P\colon\Sigma\to \calC$.
\end{definition}
We will write $P(n)$ for $P(\{1,\ldots,n\})$ (and $P(0)$ for $P(\emptyset)$). In practice, a symmetric sequence is determined by the sequence $P(0), P(1), \ldots$ of objects in $\calC$ together with an action of the symmetric group $\Sigma_n$ on $P(n)$ for each $n$.
 \begin{definition} Let $(\calC, \otimes, \one)$ be a symmetric monoidal category.
An {\em operad} $O$ in $\calC$ is a symmetric sequence $O$ in $\calC$ equipped with a {\em unit map} $\eta\colon\one\to O(1)$ and {\em partial composition maps}
$$-\circ_a-\colon O(A)\otimes O(B)\longrightarrow O(A\cup_aB)$$
for each pair of sets $A$ and $B$ and each $a\in A$ where $A\cup_a B:=(A\setminus\{a\})\coprod B$. The composition maps must be natural with respect to isomorphisms of $(A, a)$ and of $B$, and must satisfy certain axioms that say that
\begin{enumerate}
\item the composition is associative in the sense that for all sets $A, B$ and $C$, and elements $a\in A$, $b\in B$, the two compositions $(-\circ_a-)\circ_b -$ and $-\circ_a(-\circ_b-)$ define the same map
$$O(A)\otimes O(B) \otimes O(C) \longrightarrow O(A\cup_a B\cup _b C).$$ \label{OperadAssociative}
\item for two distinct elements $a, a'\in A$, the operations $-\circ_a-$ and $-\circ_{a'}-$ commute.\label{OperadCommutative}
\item $\eta$ acts as both a right unit and a left unit with respect to the composition maps.
\end{enumerate}
\end{definition}

The axioms are spelled out fully in~\cite[Definition 2.2]{Ching} (for example).  A little below, we will spell out the analogous axioms for a module over an operad. 
\begin{remark}\label{rem: composition}
Let $O$ be an operad, and let $\alpha\colon A\to B$ be a function between sets. The operad structure on $O$ gives rise to a map
\[
O(B)\otimes\bigotimes_{b\in B} O(\alpha^{-1}(b)) \longrightarrow O(A).
\]
The map is defined by applying the composition operation $\circ_b$ for each $b\in B$. The commutativity hypothesis in the definition of operad ensures that the map is independent of the order in which the composition operations are performed. 
\end{remark}

\subsubsection{Modules} Let $\calC$ be a symmetric monoidal category. Let $\calD$ be a category tensored over $\calC$. In practice, we will be most interested in the case when $\calD$ and $\calC$ are both the category of chain complexes. But it will be convenient to have a more general set-up.
\begin{definition} Let $O$ be an operad in $\calC$. A {\em right module} over $O$ with values in $\calD$ is a symmetric sequence $M$ in $\calD$ together with partial composition maps
$$-\circ_a -\colon M(A)\otimes O(B)\longrightarrow M(A\cup_a B)$$
where $A$ and $B$ are finite sets and $a\in A$. The composition maps are required to be natural in isomorphisms of $(A, a)$ and $B$, and satisfy the following axioms
\begin{enumerate}
\item For all finite sets $A, B$ and $C$ and for all $a\in A$ and $b\in B$ the following diagram commutes
$$\begin{CD}
M(A)\otimes O(B) \otimes O(C) @>1\otimes(-\circ_b-)>> M(A)\otimes O(B\cup_b C) \\
@V(-\circ_a-)\otimes 1VV  @VV-\circ_a-V \\
M(A\cup_a B)\otimes O(C) @>>-\circ_b -> M(A\cup_a B\cup_b C)
\end{CD}.$$
\item For all finite sets $A$, $B$ and $B'$ and all $a, a'\in A$ where $a\ne a'$, the following diagram commutes
$$\begin{CD}
M(A)\otimes O(B) \otimes O(B')  @>(-\otimes_{a'}- )\otimes 1 >> M(A\cup_{a'} B')\otimes O(B) \\
@V(-\otimes_{a}- )\otimes 1VV  @VV-\otimes_{a}-V \\
M(A\cup_a B)\otimes O(B') @>> -\otimes_{a'} -> M(A\cup_a B\cup_{a'} B')
\end{CD}$$
(more precisely, the top arrow in this square needs to be preceded by the map switching the second and third factors).
\item For all finite sets $A$ and for all $a\in A$ the following morphism is the identity
$$M(A)=M(A)\otimes \one \stackrel{1\otimes \eta}{\longrightarrow} M(A)\otimes O(1) \stackrel{-\circ_a-}{\longrightarrow} M(A).$$
Here we have used the identification of $\{1\}$ with $\{a\}$.
\end{enumerate}
\end{definition}
\begin{remark}\label{rem: right-module}
Let $M$ be a right module over $O$. Let $\alpha\colon A\to B$ be a function between sets. One obtains a map
\[
M(B)\otimes\bigotimes_{b\in B} O(\alpha^{-1}(b)) \longrightarrow M(A).
\]
The map is defined by applying the composition operation $\circ_b$ for each $b\in B$. The commutativity hypothesis in the definition of a right module ensures that the map is independent of the order in which the composition operations are performed. Compare with Remark~\ref{rem: composition}. 
\end{remark}

%
%

The definitions above are well known. Now we will introduce a few definitions that are not so standard. As before, let $O$ be an operad in a symmetric monoidal category $\calC$, and let $\calD$ be a category tensored over $\calC$.
\begin{definition}
An {\em infinitesimal left module} over $O$ with values in $\calD$ is a symmetric sequence $M$ in $\calD$ together with partial composition maps, defined for all finite sets $A, B$ and all $a\in A$
$$-\circ_a-\colon O(A)\otimes M(B)\longrightarrow M(A\cup_a B).$$
The composition maps are required to be natural in isomorphisms of $(A, a)$ and $B$, and satisfy the following axioms
\begin{enumerate}
\item For all finite sets $A, B$ and $C$ and for all $a\in A$ and $b\in B$ the following diagram commutes
$$\begin{CD}
O(A)\otimes O(B) \otimes M(C) @>1\otimes(-\circ_b-)>> O(A)\otimes M(B\cup_b C) \\
@V(-\circ_a-)\otimes 1VV  @VV-\circ_a-V \\
O(A\cup_a B)\otimes M(C) @>>-\circ_b -> M(A\cup_a B\cup_b C)
\end{CD}.$$
\item For all finite sets $A$ the following morphism is the identity
$$M(A)=\one \otimes M(A) \stackrel{\eta\otimes 1}{\longrightarrow} O(1)\otimes M(A) \stackrel{-\circ_1-}{\longrightarrow} M(A).$$
\end{enumerate}
\end{definition}

\begin{remark}
The concept of an infinitesimal left module is different from the usual concept of a left module over an operad that is more commonly found in literature. A left module is usually defined to be a symmetric sequence $M$, equipped with maps of the following kind. Let $\alpha\colon A\to B$ be a function of sets. Then for a left module there would be a morphism
$$O(B)\otimes \bigotimes_{b\in B}M(\alpha^{-1}(b))\longrightarrow M(A).$$
The structure of an infinitesimal left module does not give rise to such a map (contrast with Remark~\ref{rem: right-module}). Neither does a left module structure automatically give rise to an infinitesimal left module structure. \end{remark}

Finally, we are ready for the key definition of this section. Let $O$ be an operad in $\calC$ and let $\calD$ be a category tensored over $\calC$
\begin{definition}  \label{def: infinitesimal bimodule}
An {\em infinitesimal bimodule} over $O$ with values in $\calD$ is a symmetric sequence $M$ in $\calD$ endowed with the structure of a right module over $O$ and of an infinitesimal left module over $O$. The right and left composition maps are required to satisfy the following axioms.
\begin{enumerate}
\item For all finite sets $A, B$, and $C$ and for all $a\in A, b\in B$, the following diagram commutes.
$$\begin{CD}
O(A)\otimes M(B) \otimes O(C) @>(-\circ_a-)\otimes 1>> M(A\cup_a B)\otimes O(C) \\
@V 1\otimes (-\circ_b-)VV @VV-\circ_b - V \\
O(A)\otimes M(B\cup_b C) @>> -\cup_a - > M(A\cup_a B\cup_b C)
\end{CD}. $$
\item For all finite sets $A, B, B'$ and for all distinct elements $a, a'\in A$, the following diagram commutes
$$\begin{CD}
O(A) \otimes O(B) \otimes M(B') @> (-\circ_{a'}-)\otimes 1 >> M(A\cup_{a'} B')\otimes O(B) \\
@V(-\circ_a -)\otimes 1 VV @VV-\circ_a- V \\
O(A\cup_a B)\otimes M(B') @>>-\circ_{a'}- > M(A\cup_a B \cup_{a'} B') \end{CD}.
$$
More precisely, the top map in the above diagram needs to be preceded by switching the second and third factors.
\end{enumerate}
\end{definition}
\begin{example}
Let $f\colon O\to P$ be a morphism of operads. Then $f$ endows $P$ with the structure of an infinitesimal bimodule over O. 
In particular, every operad is an infinitesimal bimodule over itself.
\end{example}

\section{Operads as categories, modules as functors}\label{section: operads as categories}
It is well known that the category of right modules over an operad is equivalent to a certain category of diagrams. In this section we review this construction and introduce an analogous one for the category of infinitesimal bimodules (by contrast, the category of honest bimodules, or even of left modules over an operad is not equivalent to a category of diagrams).

The category $\calF(O)$ of the following definition is equivalent to the category $\mathsf{cat}(O)$ defined in~\cite[5.4.1]{LodayVallette}. It is sometimes called the PROP associated with $O$.
Let $\calF$ be the category of finite sets and functions between them. For two finite sets $A$ and $B$, $F(A, B)$ denotes the set of functions from $A$ to $B$.
\begin{definition}
Let $O$ be an operad in a closed symmetric monoidal category $\calC$. $\calF(O)$ is a category enriched over $\calC$. The objects of  $\calF(O)$ are finite sets. For two finite sets $A, B$,  $\map_{\calF(O)}(A,B)$ is the object of $\calC$, defined by the following formula
$$\map_{\calF(O)}(A,B)=\coprod_{\alpha\in F(A, B)}\bigotimes_{b\in B} O(\alpha^{-1}(b)).$$

To define composition in $\calF(O)$ we need to describe maps
$$\map_{\calF(O)}(B,C)\otimes \map_{\calF(O)}(A,B)\longrightarrow \map_{\calF(O)}(A,C).$$ 
Since we assume that $\calC$ is a closed symmetric monoidal category, $\otimes$ distributes over coproducts in $\calC$. Therefore, it is enough to define composition for each summand of $\map_{\calF(O)}(A,B)$ with every summand of $\map_{\calF(O)}(B,C)$. Let $\alpha\colon A\to B$ and $\beta\colon B\to C$ be functions. The composition law in $\calF(O)$ is determined by  morphisms of the following form
\[\bigotimes_{c\in C} O(\beta^{-1}(c))\otimes \bigotimes_{b\in B} O(\alpha^{-1}(b)) \longrightarrow  \bigotimes_{c\in C} O((\beta\alpha)^{-1}(c)).\]
The morphism is defined by applying the composition operation $\cup_b$, for each $b\in B$. It is the product of morphisms of type described in Remark~\ref{rem: composition}. The associativity hypothesis in the definition of operad guarantees that composition in $\calF(O)$ is associative.

To define the unit morphisms in $\calF(O)$, note that $\map_{\calF(O)}(A,A)$ has a direct summand isomorphic to $O(1)^{\otimes A}$, corresponding to the identity function on $A$. The identity morphism of $A$ in $\calF(O)$ is defined by the map $\one \stackrel{\cong}{\longrightarrow} \one^{\otimes A} \stackrel{\eta^{\otimes A}}{\longrightarrow} O(1)^{\otimes A}$. The unicity hypothesis for operads guarantees that these behave as identity morphisms.
\end{definition}
\begin{example}\label{example: Com finite sets} Let $\Com$ be the commutative operad. Unless noted otherwise, $\Com$ denotes the commutative operad in the category of topological spaces, but we will use the same name for the commutative operad in any category. The value of the operad $\Com$ at every set $A$ is the unit object. All its structure maps are the identity morphism on the unit object. In the category of topological spaces, the unit object is also the final object $*$, so $\Com$ is the final object in the category of operads in $\Top$.
It is easy to see that $\calF(\Com)$ is the category of finite sets and functions between them. We will denote this category simply by $\calF$.
\end{example}
The point for us of the category $\calF(O)$ is that right $O$-modules are the same thing as contravariant functors from $\calF(O)$. The following lemma is elementary and well-known.
\begin{lemma}\label{lemma: right module}
Let $O$ be an operad in a closed symmetric monoidal category $\calC$. Let $\calD$ be a category enriched, tensored and cotensored over $\calC$. Then the category of right modules over $O$ with values in $\calD$ is equivalent to the category of enriched contravariant functors from $\calF(O)$ to $\calD$.
\end{lemma}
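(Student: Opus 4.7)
The plan is to construct mutually inverse (enriched) functors between the two categories and verify that the right-module axioms correspond bijectively with the functoriality conditions in $\calF(O)$.

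In one direction, given a right $O$-module $M$ with values in $\calD$, I define an enriched functor $\hat M \colon \calF(O)^{op}\to\calD$ by $\hat M(A):=M(A)$ on objects. On morphism objects I need to give
$$\map_{\calF(O)}(A,B)\otimes \hat M(B)=\coprod_{\alpha\colon A\to B}\bigl(O(F_\alpha)\otimes M(B)\bigr)\longrightarrow M(A),$$
which by the coproduct-tensor distributivity in the closed symmetric monoidal $\calC$ amounts to specifying one map for each $\alpha$. On the summand indexed by $\alpha\colon A\to B$, I use the composite
$$M(B)\otimes O(F_\alpha)\xrightarrow{\cong} M(T_B\cup_B F_\alpha)\longrightarrow M(T_A)=M(A),$$
where the first arrow is the isomorphism~\eqref{eq: fusioniso1} and the second is the image under the functor $M\colon\Trees(A)\to\calD$ (determined by the right $O$-module structure) of the canonical collapse morphism $T_B\cup_B F_\alpha\to T_A$ from Lemma~\ref{lemma: composition}. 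The unit axiom for $\hat M$ is immediate from the unit axiom for the right module, and associativity of $\hat M$ with respect to composition in $\calF(O)$ follows from the fact that $\calF(O)$-composition of $\alpha\colon A\to B$ and $\beta\colon B\to C$ is induced by the collapse $F_\beta\cup_B F_\alpha\to F_{\beta\circ\alpha}$, together with the associativity of collapses of internal edges in $T_C\cup_C F_\beta\cup_B F_\alpha$ and the functoriality of $M$ on $\Trees(A)$.

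In the other direction, given an enriched contravariant functor $N\colon\calF(O)^{op}\to\calD$, I set $M(A):=N(A)$ and recover partial compositions as follows. For finite sets $A$, $B$ and $a\in A$, let $\alpha_{a,B}\colon A\cup_a B\to A$ be the function that is the identity on $A\setminus\{a\}$ and sends every element of $B$ to $a$. Under the canonical identification
$$O(F_{\alpha_{a,B}})\cong O(B)\otimes\bigotimes_{a'\in A\setminus\{a\}}O(1),$$
the unit $\eta\colon\one\to O(1)$ gives a morphism $O(B)\to O(F_{\alpha_{a,B}})\hookrightarrow\map_{\calF(O)}(A\cup_a B,A)$. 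Precomposing the enrichment
$$\map_{\calF(O)}(A\cup_a B,A)\otimes N(A)\longrightarrow N(A\cup_a B)$$
with this gives the desired map $-\circ_a-\colon M(A)\otimes O(B)\to M(A\cup_a B)$. The $\Sigma_n$-action on $M(n)$ is recovered from the automorphisms of $\{1,\ldots,n\}$ in $\calF(O)$, which by definition lie in the summand indexed by the corresponding bijection and are obtained from $\eta^{\otimes n}$.

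The main task is the verification that these two constructions are mutually inverse, and that they send axioms to axioms. The essential point is that every morphism of $\calF(O)$ is generated, under composition and the monoidal structure, by the partial-composition-type morphisms $\alpha_{a,B}$ together with isomorphisms, so the enriched functor $\hat M$ is determined by its values on these generators — and those values are precisely the partial composition maps of the right module. The three right-module axioms correspond, respectively, to: associativity of composition in $\calF(O)$ across the root of a forest (axiom~(1)); commutativity of $-\circ_a-$ and $-\circ_{a'}-$ for $a\ne a'$ (axiom~(2)), which corresponds to the fact that in $\calF(O)$ the morphisms $\alpha_{a,B}$ and $\alpha_{a',B'}$ commute up to the canonical forest isomorphism; and the unit axiom (axiom~(3)), matching the unit morphism in $\calF(O)$. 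The hard part, such as it is, is to check these correspondences rigorously, but this is essentially the same bookkeeping that shows $O$ itself defines a functor on $\Forests(A)$ — once one packages it through the isomorphisms~\eqref{eq: fusioniso1}, \eqref{eq: fusioniso2} and Lemma~\ref{lemma: composition}, the axioms on each side become the same diagram, and the equivalence follows.
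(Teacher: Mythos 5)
Your proposal is correct and follows essentially the same approach as the paper: in the forward direction you use the isomorphism~\eqref{eq: fusioniso1} together with the collapse $T_B\cup_B F_\alpha\to T_A$, and in the backward direction you recover $-\circ_a-$ by inserting units along the canonical quotient map $A\cup_a B\to A$ (the paper's map $q$) and applying the enriched functoriality. The remaining axiom-chasing you defer to is the same as what the paper leaves implicit.
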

\begin{proof}[Sketch of proof]
Let $M$ be a right module over $\calO$. By remark~\ref{rem: right-module}, given a function $\alpha\colon A\to B$, there is a map
\begin{equation}\label{eq: summand}
M(B)\otimes \bigotimes_{b\in B} O(\alpha^{-1}(b)) \longrightarrow M(A).
\end{equation}
Taking sum over all functions from $A$ to $B$, one obtains a map
\begin{equation}\label{eq: functor}
M(B)\otimes \map_{\calF(O)}(A,B) \longrightarrow M(A).
\end{equation}
This makes $M$ into a contravariant functor from $\calF(O)$. The associativity hypothesis in the definition of a right module is equivalent to compatibility of these maps with composition in $\calF(O)$.

Conversely, suppose $M$ is an enriched contravariant functor from $\calF(O)$ to $\calD$. This means that there are maps as in~\eqref{eq: functor}. In particular, for a map $\alpha\colon A\to B$ we get a map as in~\eqref{eq: summand}. Now let $a\in A$ and let $\alpha\colon A\cup_a B\to A$ be the map that sends $B$ to $\{a\}$. We get a map
\[
M(A)\otimes O(1)^{\otimes A\setminus\{a\}} \otimes O(B)\longrightarrow M(A\cup_a B).
\]
Composing with the identity $\mathsf{1}\cong \mathsf{1}^{\otimes A\setminus \{a\}} \to O(1)^{\otimes A\setminus\{a\}}$, we obtain a map
\[
M(A) \otimes O(B)\longrightarrow M(A\cup_a B).
\]
This map endows $M$ with the structure of a right module. It is easy to check that the assumption that $M$ is a functor implies that that the composition maps are associative, commutative and unital, as required.

\end{proof}
\begin{corollary}\label{cor: right com modules are F modules}
The category of right $\Com$-modules with values in $\calD$ is equivalent to the category of contravariant functors from $\calF$ (the category of finite sets) to $\calD$.
\end{corollary}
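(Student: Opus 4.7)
The plan is to obtain this corollary as a direct specialization of Lemma~\ref{lemma: right module} combined with the identification $\calF(\Com) = \calF$ established in Example~\ref{example: Com finite sets}. So no new ideas are required, only a careful bookkeeping of what happens when $O = \Com$.

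First I would invoke Lemma~\ref{lemma: right module} with $O = \Com$ and with the ambient symmetric monoidal category $\calC$ taken to be (unpointed) spaces, or equivalently sets if we think of $\Com$ as the set-valued operad whose value on every finite set is a single point. This gives an equivalence between right $\Com$-modules with values in $\calD$ and enriched contravariant functors from $\calF(\Com)$ to $\calD$. Here the hypothesis that $\calD$ is enriched, tensored, and cotensored over $\calC$ is harmless because the enrichment over sets imposes no additional structure — an enriched functor from a set-enriched category is just an ordinary functor.

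Second, I would appeal to Example~\ref{example: Com finite sets}, which computes $\calF(\Com)$ explicitly: since $\Com(A) = *$ for every finite set $A$, the morphism object $\map_{\calF(\Com)}(A,B) = \coprod_{\alpha\colon A\to B}\Com(F_\alpha)$ collapses to the set of all functions from $A$ to $B$, and the composition law described in the proof of Lemma~\ref{lemma: right module} reduces to ordinary composition of functions. Hence $\calF(\Com)$ is the category $\calF$ of finite sets and all functions between them.

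Combining these two observations yields the claimed equivalence between right $\Com$-modules with values in $\calD$ and contravariant functors $\calF \to \calD$. There is no real obstacle here; the only mild subtlety worth checking is that the unit morphism and composition described abstractly in the definition of $\calF(O)$ really do reduce to identity and ordinary function composition when $O = \Com$, but this is immediate from the fact that the unit $\eta\colon \one \to \Com(1)$ and the operadic composition maps in $\Com$ are (uniquely) the canonical maps of the terminal operad.
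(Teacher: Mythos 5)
Your proposal is correct and follows exactly the route the paper intends: the corollary is stated immediately after Lemma~\ref{lemma: right module} and Example~\ref{example: Com finite sets}, and is meant to be read as the specialization $O=\Com$ of the lemma combined with the identification $\calF(\Com)\cong\calF$ from the example. Nothing is missing.
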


Now we will introduce a variation of the category $\calF(O)$ that will do for infinitesimal bimodules over $O$ what $\calF(O)$ did for right modules. To begin with, let $\Gamma$ be the category of {\em pointed} finite sets and pointed functions. We will adopt the convention that all the pointed sets have the same basepoint $*$. So if $S$ is an object of $\Gamma$ then the elements of $S$ are $*, s_1, s_2,$ etc. For pointed sets $S, T$, let $\Gamma(S, T)$ be the set of pointed functions from $S$ to $T$. We define the category $\Gamma(O)$ as a restriction of $\calF(O)$ in an obvious way.
\begin{definition}
Let $O$ be an operad in a closed symmetric monoidal category $\calC$. $\Gamma(O)$ is a category enriched over $\calC$. The objects of  $\Gamma(O)$ are pointed finite sets. For two pointed finite sets $S, T$,  $\map_{\Gamma(O)}(S, T)$ is the object of $\calC$, defined by the following formula
$$\map_{\Gamma(O)}(S, T)=\bigoplus_{\alpha\in\Gamma(S, T)}\bigotimes_{t\in T} O(\alpha^{-1}(t)).$$
Composition in $\Gamma(O)$ is defined in the same way as in $\calF(O)$.
\end{definition}
\begin{example}\label{example: Com pointed sets} Recall that $\Com$ is the commutative operad in $\Top$. It is easy to see that $\Gamma(\Com)$ is just $\Gamma$. Compare with Remark~\ref{example: Com finite sets}.
\end{example}
For our purposes, it is important to define a certain twisted version of $\Gamma$.
\begin{definition}\label{def: twisted}
Let $O$ be an operad in $\calC$. We define a category $\widetilde\Gamma(O)$. The objects of $\widetilde\Gamma(O)$ are the same as of $\Gamma(O)$ (i.e., pointed finite sets). Moreover, for every two objects $S, T$, $\map_{\widetilde \Gamma(O)}(S, T)=\map_{\Gamma(O)}(S, T)$. The identity morphisms are defined in the same way. But the composition law is different. We proceed to describe the composition in $\widetilde\Gamma(O)$. Let $\alpha\colon S\to T$ and $\beta\colon T\to U$ be pointed functions. We need to define a morphism
\[
\bigotimes_{u\in U} O(\beta^{-1}(u))\otimes \bigotimes_{t\in T} O(\alpha^{-1}(t))\longrightarrow \bigotimes_{u\in U} O((\beta\alpha)^{-1}(u)).
\]
Recall that the composition in $\Gamma(O)$ is defined by performing the operations $\circ_t$, for all $t\in T$. Without loss of generality, suppose that we first perform the operation $\circ_*$. Note that the operation $\circ_*$ can be written as a map 
$$\circ_*\colon O(\beta^{-1}(*))\otimes O(\alpha^{-1}(*))\longrightarrow O(\alpha^{-1}(*)\vee \beta^{-1}(*)).$$
In $\widetilde\Gamma(O)$, this map is replaced with the composed map 
$$O(\beta^{-1}(*))\otimes O(\alpha^{-1}(*))\longrightarrow O(\alpha^{-1}(*))\otimes O(\beta^{-1}(*))\stackrel{\circ_*}{\longrightarrow} O(\alpha^{-1}(*)\vee \beta^{-1}(*)).$$
Here the first map is switching the order, and the second map is the map $\circ_*$. Note that the roles of $\alpha^{-1}(*)$ and $\beta^{-1}(*)$ have been reversed: in $\Gamma(O)$ the output of $O(\alpha^{-1}(*))$ is fed into the $*$-input of $O(\beta^{-1}(*))$, while in $\widetilde\Gamma(O)$ it is the other way around. Here we have been helped by the convention that all pointed sets share a common basepoint $*$.
 
The remaining operations  $\circ_t$ for $t\in T\setminus\{*\}$ are performed in $\widetilde \Gamma(O)$ in the same way as in $\Gamma(O)$.

%

It is tedious, but straightforward to check that the composition in $\widetilde \Gamma(O)$ is associative and unital.
\end{definition}
\begin{example}\label{example: gamma(com)=gamma}
In the case when $O=\Com$, $O(A)=*$ for all $A$. In this case the maps 
$$O(\beta^{-1}(*))\otimes O(\alpha^{-1}(*)) \longrightarrow O(\beta^{-1}(*)\vee \alpha^{-1}(*)).$$ 
and
$$O(\beta^{-1}(*))\otimes O(\alpha^{-1}(*)) \longrightarrow O(\alpha^{-1}(*))\otimes O(\beta^{-1}(*)) \stackrel{\circ_*}{\longrightarrow} O(\beta^{-1}(*)\vee \alpha^{-1}(*))$$
are clearly the same map $*\to  *$. Therefore $\Gamma(\Com)=\widetilde\Gamma(\Com)=\Gamma$. But in general, the categories $\Gamma(O)$ and $\widetilde\Gamma(O)$ are not equivalent. In particular, they are different when $O$ is the unframed little disks operad, which is the example that is important to us (we will get to it in the next section).

We suspect that $\Gamma(O)$ and $\widetilde\Gamma(O)$ are equivalent whenever $O$ is a cyclic operad. 
\end{example}

The following proposition is the main result of this section. It is analogous to Lemma~\ref{lemma: right module}.
\begin{proposition}\label{prop: infinitesimal bimodule}
Let $O$ be an operad in a closed symmetric monoidal category $\calC$. Let $\calD$ be a category enriched, tensored and cotensored over $\calC$. Then the category of infinitesimal bimodules over $O$ with values in $\calD$ is equivalent to the category of enriched contravariant functors from $\widetilde\Gamma(O)$ to $\calD$.
\end{proposition}
\begin{proof}
Let $M$ be an infinitesimal bimodule over $\calO$. We will associate with $M$ an enriched contravariant functor $\widetilde M\colon\widetilde\Gamma(O)\longrightarrow\calD$. It is defined on objects of $\widetilde\Gamma(O)$ by the formula
$$\widetilde M(S):= M(S\setminus\{*\}).$$

To describe the action of $\widetilde M$ on morphisms, we need to construct morphisms in $\calD$
$$\widetilde M(T)\otimes \map_{\widetilde\Gamma(O)}(S, T) \longrightarrow \widetilde M(S)$$
that are associative and unital with respect to composition in $\widetilde\Gamma(O)$. Since we assumed that $\calD$
is enriched, tensored and cotensored over $\calC$, it follows that tensor product distributes over coproducts, and so our task is equivalent to constructing morphisms
\[
\widetilde M(T) \otimes \bigotimes_{t\in T} O(\alpha^{-1}(t)) \longrightarrow \widetilde M(S)
\]
where, as usual, $\alpha\colon S\to T$ is a pointed function. The map is defined as the composite
\[
M(T\setminus\{*\}) \otimes \bigotimes_{t\in T} O(\alpha^{-1}(t)) \longrightarrow O(\alpha^{-1}(*))\otimes M(T\setminus\{*\}) \otimes \bigotimes_{t\in T\setminus \{*\}} O(\alpha^{-1}(t)) \longrightarrow M(S\setminus\{*\}).
\]
Here the first map is just  changing the order. For the second one, we use the map $$\cup_*\colon O(\alpha^{-1}(*))\otimes M(T\setminus\{*\})\to O(\alpha^{-1}(*)\cup_* T\setminus\{*\})$$ which comes from the infinitesimal left module structure on $M$. We also use the right module structure on $M$ to mulply $M(T\setminus\{*\})$ with $\bigotimes_{T\setminus\{*\}} O(\alpha^{-1}(t))$ on the right. It is routine to check that the axioms for infinitesimal bimodule imply that $\widetilde M$ is a well-defined functor. 

Conversely, suppose that $\widetilde M\colon \widetilde\Gamma(O)\to \calD$ is an enriched functor. We need to associate with it an infinitesimal bimodule $M$. Objectwise, $M$ is defined by $M(A)=\widetilde M(A_+)$, where $A_+=A\coprod\{*\}$. Note that adjoining a basepoint defines a functor $\calF(O)\to \widetilde\Gamma(O)$. This makes $M$ a contravariant functor from $\calF(O)$ to $\calD$. By Lemma~\ref{lemma: right module}, this endows $M$ with the structure of a right module over $O$. It remains to define the infinitesimal left module structure. We can phrase it as follows. Let $S$ be a pointed set and $B$ unpointed. We need to define maps $\circ_*$
\[
O(S)\otimes M(B)\longrightarrow M(S\cup_* B).
\]
Let $\alpha\colon S\coprod B\to B_+$ be the pointed map that sends $S$ to the basepoint and is the identity on $B$. Since $\widetilde M$ is a functor on $\widetilde \Gamma(O)$, we get a map
\[
\widetilde M(B_+)\otimes O(S)\longrightarrow \widetilde M(S\coprod B).\]
But this is the same thing as a map
\[
O(S)\otimes M(B) \longrightarrow M(S\cup_* B).\]
And this map defines the infinitesimal left module structure on $M$. Once again, it is straightforward to verify that $M$ is an infinitesimal bimodule, and that we defined a bijective corresponednce between infinitesimal bimodules and contravariant functors on $\widetilde \Gamma(O)$.

\end{proof}
The following corollary follows from Proposition~\ref{prop: infinitesimal bimodule} together with Example~\ref{example: gamma(com)=gamma}
\begin{corollary}\label{cor: weak com bimods are gamma mods}
The category of infinitesimal $\Com$-bimodules with values in $\calD$ is equivalent to the category of contravariant functors from $\Gamma$ (the category of pointed finite sets) to $\calD$.
\end{corollary}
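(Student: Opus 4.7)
The plan is to read the corollary as the specialization of Proposition~\ref{prop: weak bimodule} to $O=\Com$, combined with the identification of $\widetilde\Gamma(\Com)$ with the usual category $\Gamma$ of pointed finite sets. There is no real obstacle here; the content is already packaged in that proposition together with Example~\ref{example: gamma(com)=gamma} and the basepoint-swap equivalence $\widetilde\Gamma\simeq\Gamma$ from the Remark preceding the proposition.

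First I would instantiate Proposition~\ref{prop: weak bimodule} with $O=\Com$. This yields an equivalence between the category of weak $\Com$-bimodules with values in $\calD$ and the category of enriched contravariant functors from $\widetilde\Gamma(\Com)$ to $\calD$. Since $\Com(A)=\one$ for every finite set $A$, each forest gives $\Com(F_{\tilde\alpha})=\one$ and each hom object $\map_{\widetilde\Gamma(\Com)}(A,B)$ is a coproduct of copies of $\one$ indexed by pointed functions $A_-\coprod\{b_0\}\to B_-\coprod\{a_0\}$. Consequently the $\calC$-enrichment degenerates to a set-level structure, and enriched functors out of $\widetilde\Gamma(\Com)$ become ordinary contravariant functors out of the underlying ordinary category; by Example~\ref{example: gamma(com)=gamma} this underlying category is exactly $\widetilde\Gamma$.

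Finally I would compose with the canonical equivalence $\widetilde\Gamma\simeq\Gamma$ from the Remark preceding Proposition~\ref{prop: weak bimodule}, given on objects by the identity and on morphisms by pre- and post-composing with the basepoint-swap bijections $A\cong A_-\coprod\{b_0\}$ and $B_-\coprod\{a_0\}\cong B$. Composing the three equivalences gives the asserted identification of weak $\Com$-bimodules in $\calD$ with contravariant functors $\Gamma\to\calD$. The only bookkeeping worth double-checking is that identity morphisms and composites line up under the addition, removal, and swap of basepoints, but this is built into the definitions of $\widetilde\Gamma(\Com)$ and of the comparison $\widetilde\Gamma\simeq\Gamma$ and requires no new input.
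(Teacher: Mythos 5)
Your argument matches the paper's proof exactly: the corollary is stated in the paper as an immediate consequence of Proposition~\ref{prop: weak bimodule} together with Example~\ref{example: gamma(com)=gamma}, which is precisely the specialization-to-$\Com$ and $\widetilde\Gamma(\Com)\cong\widetilde\Gamma\simeq\Gamma$ route you take. The extra remarks on the enrichment collapsing to a set-level structure are a harmless elaboration of why $\widetilde\Gamma(\Com)$ is an ordinary category.
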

\begin{example}
Let $X$ be a pointed topological space. Such an $X$ gives rise to a contravariant functor from $\Gamma$ to $\Top$
$$S\mapsto \map_*(S, X).$$
By the above corollary, this contravariant functor gives rise to an infinitesimal $\Com$-bimodule. Indeed, as a symmetric sequence it is given by the formula
$$A\mapsto X^{A}$$
(here $A$ lives in unpointed sets). 
The right module structure is given by the contravariant fuctoriality in the variable $A$. The infinitesimal left module structure is given by basepoint inclusion.
\end{example}

\section{The little disks operad} \label{section: little disks}
In this section we apply the theory of previous section to the little disks operad. Inevitably, we have to begin with a few definitions.

\begin{definition}\label{def: standard embeddings}
Let $\R^m$ be a Euclidean space. A {\em standard isomorphism} of $\R^m$ is a self homeomorphism of $\R^m$ that is the composition of a translation and a multiplication by a positive scalar.

Let $A$ be a connected subspace of $\R^m$. A map $f\colon A\to \R^m$ is called a {\em standard embedding} if $f$ equals the inclusion followed by a standard isomorphism of $\R^m$.

More generally, if $X$ is another subset of $\R^m$, then a standard embedding of $A$ into $X$ is a standard embedding of $A$ into $\R^m$ whose image lies in $X$.

Even more generally, we define the following category $\mfld$ of spaces and standard embeddings between them. An object of $\mfld$ is a disjoint union of open subsets of $\R^m$. A morphism from $A$ to $X$ is an embedding $f \colon A\hookrightarrow X$ having the property that the restriction of $f$ to each connected component of $A$ is a standard embedding into a component of $X$. We call such maps standard embeddings of $A$ into $X$. The space of standard embedding of $A$ into $X$ will be denoted $\sEmb(A, X)$. It is easy to see that a composition of standard embeddings is again a standard embedding, and so we have a topologically enriched category.

\end{definition}
\begin{definition}
Let $D^m$ be the unit open ball in $\R^m$. The $m$-dimensional little disks operad, denoted by $\balls_m$, is defined as follows. As a symmetric sequence, $\balls_m(k):=\sEmb(k\times D^m, D^m)$. As an operad, $\balls_m$ is the {\it endomorphism operad} of the unit disc $D^m$ in the category $\mfld$ (Definition~\ref{def: standard embeddings}) which is viewed as a symmetric monoidal category with disjoint union as symmetric product and empty set as unit.
\end{definition}
The operad $\balls_m$ is sometimes called the {\it unframed} little disks operad. We also will have some use for the framed little disks operad. There are slight variations in the literature regarding the definition of the framed little disks operad. Here is our definition.
\begin{definition}
Let $D^m$ be the unit ball in $\R^m$ and let $M$ be an open subset of $\R^m$. A {\it framed} embedding of $D^m$ into $M$ is an embedding that is the composition of an orthogonal transformation and a standard embedding. A framed embedding of a disjoint union of some copies of $D^m$ into $M$ is an embedding that resricts to a framed embedding on each copy of $D^m$. Let $\sEmb_\framed(A\times D^m, M)$ denote the space of framed embeddings of $A\times D^m$ into $M$, where $A$ is, as usual, a finite set. 
\end{definition}
\begin{definition}
The framed $m$-dimensional little disks operad is the operad whose $k$-th space is $\sEmb_\framed(k\times D^m, D^m)$. The operad structure maps are given by compositions, as usual. The framed little disks operad will be denoted by $\balls_m^\framed$. We also introduce the operad $\balls_m^\smooth$, whose $k$-th space is the space of all smooth embeddings $\Emb(k\times D^m, D^m)$.
\end{definition}
\begin{remark}
There are maps of operads $$\balls_m\longrightarrow \balls^\framed_m \longrightarrow \balls^\smooth_m.$$ Both maps are levelwise inclusions, and the second map is a levelwise  homotopy equivalence.
\end{remark}
Our main goal in this section is to describe the right modules, and especially the infinitesimal bimodules over $\balls_m$, using the theory of the previous section. Our first task is to describe explicitly the categories $\calF(\balls_m)$ (we also describe $\calF(\balls^\framed_m)$ and $\calF(\balls^\smooth_m)$ along the way) and especially $\widetilde\Gamma(\balls_m)$ (but not $\widetilde\Gamma(\balls^\framed_m)$ or $\widetilde\Gamma(\balls^\smooth_m)$). We begin with the category $\calF(\balls_m)$ and the associated category of right modules over $\balls_m$. The following lemma is an immediate consequence of the definitions.
\begin{lemma}
The category $\calF(\balls_m)$ can be identified with the full subcategory of $\mfld$ whose objects are disjoint unions of copies of $D^m$. More explicitly, it is the topological category whose objects are finite sets (or, equivalently, finite disjoint unions of copies of $D^m$) and where the space of morphisms from $A$ to $B$ is $\sEmb(A\times D^m, B\times D^m)$. 

Similarly, the category $\calF(\balls^\framed_m)$ (resp. $\calF(\balls^\smooth_m)$) is the topological category whose objects are finite sets (or, equivalently, finite disjoint unions of copies of $D^m$) and where the space of morphisms from $A$ to $B$ is $\sEmb_\framed(A\times D^m, B\times D^m)$ (resp. $\Emb(A\times D^m, B\times D^m)$). 
\end{lemma}
We will refer to $\calF(\balls_m)$ as the category of disjoint unions of standard balls and standard embeddings between them. The following lemma is a special case of Lemma~\ref{lemma: right module}.
It certainly is well known, but we include it here as a warm-up for the analogous statements about infinitesimal bimodules.
\begin{lemma}\label{lem: right}
The category of right modules over $\balls_m$ with values in $\Top$ is equivalent to the category of contravariant topological functors from $\calF(\balls_m)$ to $\Top$. There are analogous statements for right modules over $\balls^\framed_m$ and $\balls^\smooth_m$.
\end{lemma}
\begin{example}
Let $M$ be an open subspace of $\R^m$. We associate with $M$ a right module $\sEmb(-, M)$ defined by $\sEmb(-, M)(A):=\sEmb(A\times D^m, M)$. Obviously, this defines a contravariant functor from $\calF(\balls_m)$ to $\Top$ and thus a right module over the little disks operad. Similarly define right modules $\sEmb_\framed(-, M)$ and $\Emb(-, M)$ over $\balls_m^\framed$ and $\balls_m^\smooth$ respectively.
\end{example}
Our next, and main, task is perform a similar analysis of infinitesimal bimodules over $\balls_m$. Note that we do not consider infinitesimal bimodules over $\balls_m^\framed$ in this paper. 

We need to describe the category $\widetilde\Gamma(\balls_m)$. It follows easily from the definition that  $\widetilde\Gamma(\balls_m)$ is a category whose objects are pointed finite sets and where the space of morphisms from $S$ to $T$ (where $S$ and $T$ are pointed finite sets) is the space of standard embeddings of $S\times D^m$ into $T\times D^m$ that take the basepoint component into the basepoint component. However, composition in $\widetilde\Gamma(\balls_m)$ is not the standard one. 
We would like to present an alternative description of the category~$\widetilde\Gamma(\balls)$, in which composition is given by ordinary composition of functions. We achieve this by replacing the basepoint component $*\times D^m$ with an ``antiball''. 

Let $\overline{D^m}$ be the closed unit ball in $\R^m$. We refer to the complement $R^m\setminus \overline{D^m}$ as the antiball. For a pointed set $S$, define
\[
S\boxtimes D^m=(S\setminus\{*\})\times D^m \coprod (R^m\setminus \overline{D^m}).
\]
Thus $S\boxtimes D^m$ is the disjoint union of $|S|-1$ balls and one antiball.

The following proposition is the main result of this seciton.
\begin{proposition}\label{prop: description}
The category $\widetilde\Gamma(\balls_m)$ is equivalent to the following category. Its objects are pointed finite sets. Given two pointed sets $S, T$, the space of morphisms from $S$ to $T$ is \[
\sEmb(S\boxtimes D^m, T\boxtimes D^m).
\] 
Composition in this category is just ordinary composition of standard embeddings.
\end{proposition}
\begin{proof}
We need to show that two categories are equivalent. By definition, they have the same set of objects: finite pointed sets. We will construct homeomorphisms between corresponding mapping spaces, and then check that the homeomorphisms preserve composition.

Let $S, T$ be pointed sets. By definition, the space of maps from $S$ to $T$ in $\widetilde \Gamma(\balls_m)$ is
\[
\coprod_{\alpha\in \Gamma(S, T)} \prod_{t\in T} \sEmb(\alpha^{-1}(t)\times D^m, D^m).
\]
To compare this space with $\sEmb(S\boxtimes D^m, T\boxtimes D^m)$, note that there are no standard morphisms from the antiball to the unit ball. Therefore we may write this space as
\[
\coprod_{\alpha\in \Gamma(S, T)} \sEmb\left(\alpha^{-1}(*)\boxtimes D^m, \R^m\setminus\overline{D^m}\right)\times\prod_{t\in T\setminus\{*\}} \sEmb(\alpha^{-1}(t)\times D^m, D^m).
\]
Therefore, we need to establish a homeomorphism, where $U$ is a pointed set ($U=\alpha^{-1}(*)$ in the above formula)
\begin{equation}\label{eq: embeddings}
\sEmb(U\times D^m, D^m)\cong \sEmb(U\boxtimes D^m, \R^m\setminus \overline{D^m}).
\end{equation}
Let $\alpha\colon D^m \hookrightarrow D^m$ be a standard embedding. There exists a unique standard isomorphism $\bar{\alpha}\colon \R^m\to \R^m$, such that $\alpha$ is the restriction of $\bar{\alpha}$ to the unit ball $D^m$. $\bar{\alpha}$ has an inverse. By a slight abuse of notation, we denote this inverse simply by $\alpha^{-1}$. We will also denote by $\alpha^{-1}$ the restriction of $\alpha^{-1}$ to any subset of $\R^m$. Note that  ${\alpha}^{-1}$ takes the antiball into the antiball. 

Now let $\alpha\colon U\times D^m\hookrightarrow D^m$ be a standard embedding of a union of balls. Here $U$ is a pointed finite set. Let's say that $U=\{*, u_1, \ldots, u_n\}$. Let $\alpha_*, \alpha_1, \ldots, \alpha_n$ be the restrictions of $\alpha$ to $*\times D^m$, $u_1\times D^m$, etc. Then it is easy to see that $(\alpha_*^{-1}, \alpha_*^{-1}\alpha_1, \ldots, \alpha_*^{-1}\alpha_n)$ defines a standard embedding of $U\boxtimes D^m\hookrightarrow \R^m\setminus\overline{D^m}$. The assignment $$(\alpha_*, \alpha_1, \ldots, \alpha_n)\mapsto (\alpha_*^{-1}, \alpha_*^{-1}\alpha_1, \ldots, \alpha_*^{-1}\alpha_n)$$ defines the homemorphism~\eqref{eq: embeddings}

It remains to show that the homeomorphism takes composition in $\widetilde\Gamma(\balls_m)$ to ordinary composition of embeddings. To do this, we will first describe explicitly the composition law in $\widetilde \Gamma(\balls_m)$.

Let $\alpha\colon S\times D^m \to T\times D^m$ and $\beta\colon T\times D^m \to U\times D^m$ be standard embeddings (taking the basepoint component into the basement component), considered as morphisms in $\widetilde\Gamma(\balls_m)$. Let $\beta\tilde\circ\alpha\colon S\times D^m \to U\times D^m$ be the composition of $\beta$ and $\alpha$ in $\widetilde\Gamma(\balls_m)$, while $\beta\alpha$ denotes the standard composition. In what follows, let $D^m_*=\{*\}\times D^m$ be the basepoint component of $S\times D^m, T\times D^m$, and $U\times D^m$ (keep in mind that $*$ is the common basepoint of all pointed sets). To describe $\beta\tilde\circ\alpha$ in terms of standard compositions of embeddings, we will write $S\times D^m$ as a disjoint union of several subsets: Let $S_*=D^m_*$, $S_1=\alpha^{-1}(D^m_*)\setminus D^m_*$, $S_2=(\beta\alpha)^{-1}(D^m_*)\setminus \alpha^{-1}(D^m_*)$, and $S_3=(\beta\alpha)^{-1}(U\times D^m\setminus D^m_*).$ Clearly, $S\times D^m$ is the disjoint union of $S_*$, $S_1$, $S_2$, and $S_3$. Let $\alpha_*, \alpha_1, \alpha_2$, and $\alpha_3$ be the restrictions of $\alpha$ to $S_*, S_1, S_2$, and $S_3$ respectively. We write $\alpha=(\alpha_*, \alpha_1, \alpha_2, \alpha_3)$.

Similarly, we partition $T\times D^m$ as follows. Let $T_*=D^m_*$, $T_1=\beta^{-1}(D^m_*)\setminus D^m_*$, and $T_2=\beta^{-1}(U\times D^m\setminus D^m_*)$. Clearly, $T\times D^m=T_*\coprod T_1\coprod T_2$. Let $\beta_*, \beta_1, \beta_2$ be the restrictions of $\beta$ to $T_*, T_1,$ and $T_2$ respectively. Unraveling the definition of $\widetilde\Gamma(\balls_m)$ (definition~\ref{def: twisted}), one finds that 
$$\beta\tilde\circ \alpha=(\alpha_*\beta_*, \alpha_1, \alpha_*\beta_1\alpha_2,\beta_2\alpha_3)$$ where the four components denote the restriction of $\beta\tilde\circ\alpha$ to $D^m_*, S_1, S_2$, and $S_3$ respectively.

Now let us examine the images of $\alpha, \beta,$ and $\beta\tilde\circ\alpha$ in $\sEmb(S\boxtimes D^m, T\boxtimes D^m)$, $\sEmb(T\boxtimes D^m, U\boxtimes D^m)$, and $\sEmb(S\boxtimes D^m, U\boxtimes D^m)$ respectively. By definition, $\alpha=(\alpha_*, \alpha_1, \alpha_2, \alpha_3)$ goes to $(\alpha_*^{-1}, \alpha_*^{-1}\alpha_1, \alpha_2, \alpha_3)$. Similarly $\beta=(\beta_*, \beta_1, \beta_2)$ goes to $(\beta_*^{-1}, \beta_*^{-1}\beta_1, \beta_2)$. Finally, $\beta\tilde\circ \alpha$ goes to $$(\beta_*^{-1}\alpha_*^{-1}, \beta_*^{-1}\alpha_*^{-1}\alpha_1, \beta_*^{-1}\beta_1\alpha_2,\beta_2\alpha_3).$$
Clearly, the image of  $\beta\tilde\circ \alpha$ is the composition of the image of $\beta$ and the image of $\alpha$, which completes the proof.

\end{proof}

\begin{examples}\label{ex: balls}
We conclude the section with some examples of contravariant functors on $\widetilde\Gamma(\balls_m)$, or equivalently of infinitesimal bimodules over $\balls_m$.

 As with every operad, $\balls_m$ is an infinitesimal bimodule over itself. By Proposition~\ref{prop: infinitesimal bimodule}, there is a corresponding contravariant functor from $\widetilde\Gamma(\balls_m)$ to $\Top$. It is defined on objects by the formula
$$T\mapsto \sEmb\left((T\setminus\{*\})\times D^m, D^m\right).$$ To understand the functoriality with respect to morphisms in $\widetilde\Gamma(\balls_m)$, let us first notice that $\sEmb\left((T\setminus\{*\})\times D^m, D^m\right)$ can be identified with the subspace of $\sEmb\left(T\boxtimes D^m, \R^m\right)$ consisting of those standard embeddings that restrict to the inclusion on the antiball. Let $f$ be such an embedding, and let
$$\alpha\colon S\boxtimes D^m \to T\boxtimes D^m$$ be a morphism in $\widetilde\Gamma(\balls_m)$. Then $f\circ \alpha$ is a standard embedding of $S\boxtimes D^m$ into $\R^m$. $f\circ\alpha$ may not restrict to the inclusion on $\R^m\setminus \overline{D^m}$. Let $\alpha_*\colon \R^m\setminus \overline{D^m}\longrightarrow \R^m\setminus \overline{D^m}$ be the restriction of $\alpha$ to the antiball. As in the proof of Proposition~\ref{prop: description}, let $\alpha_*^{-1}$ be the inverse of the isomorphism of $\R^m$ determined by $\alpha$. It is easy to see that $\alpha_*^{-1} f\alpha$ is an embedding of $S\boxtimes D^m$ into $\R^m$ that restricts to the identity on the antiball. The functoriality of $\sEmb\left((T\setminus\{*\})\times D^m, D^m\right)$ is defined by the formula $f\mapsto \alpha_*^{-1} f\alpha$. It is easy to check that this is the correct definition.

A perhaps even more natural example of a contravariant functor from $\widetilde\Gamma(\balls_m)$ to $\Top$ (and thus of an infinitesimal bimodule over $\balls_m$) is the functor 
$$S\mapsto \sEmb\left(S\boxtimes D^m, \R^m\right).$$
This is obviously a contravariant functor $\widetilde\Gamma(\balls_m)\longrightarrow \Top$. In fact, it is weakly equivalent to the previous functor.
\begin{lemma}\label{lemma: equivalent infinitesimal bimodules}
There is a natural transformation of functors
\begin{equation}\label{eq: projection}
\sEmb\left(S\boxtimes D^m, \R^m\right)\longrightarrow \sEmb\left(S\setminus\{*\}\times D^m, D^m\right)
\end{equation}
which is a homotopy equivalence for each $S$.
\end{lemma}
\begin{proof}
As before, let us identify $\sEmb\left((S\setminus\{*\})\times D^m, D^m\right)$ with the subspace of $$\sEmb\left(S\boxtimes D^m, \R^m\right)$$ consisting of those embeddings that restrict to the inclusion on $\R^m\setminus \overline D^m$. For an $f\in \sEmb\left(S\boxtimes D^m, \R^m\right),$ let us write $f=(f_*, f_1)$, where $f_*$ is the restriction of $f$ to the antiball, and $f_1$ is the restriction of $f$ to $(S\setminus\{*\})\times D^m$.
We define the map~\eqref{eq: projection} by the formula $f\mapsto f_*^{-1}f_1$.
 It is easy to check that it is natural with respect to morphisms in $\widetilde\Gamma(\balls_m)$, and that it is a homotopy equivalence. In fact, it is a fiber bundle with contractible fibers (the fiber is the space of standard isomorphisms of $\R^m$). 
\end{proof}
\begin{remark}\label{remark: equivalent infinitesimal bimodules}
The lemma can be interpreted as follows. The functor
$$S\mapsto \sEmb\left(S\boxtimes D^m, \R^m\right)$$ is equivalent to $\balls_m$ as an infinitesimal bimodule over $\balls_m$. If $\R^m\subset \R^n$ then it follows that $ \sEmb\left(S\boxtimes D^n, \R^n\right)$ is equivalent to $\balls_n$ as an infinitesimal bimodule over $\balls_n$, and therefore also over $\balls_m$.
\end{remark}

These infinitesimal bimodules are important to us, because they give a model for the functor $U\mapsto \Ebarc(U, \R^n)$. We will clarify this point in the next section.
\end{examples}

\section{Taylor tower as the space of module maps}\label{section: module maps}
We will now recall the basic setup of M. Weiss's embedding calculus (also known as manifold calculus)~\cite{WeissEmb, DeBrito-Weiss}. 
For a topological space $X$, let $\calO(X)$ be the poset/category of open subsets of $X$.
Let $M$ be an $m$-dimensional manifold. Manifold calculus is concerned with presheaves on $M$. In other words, with contravariant functors from $\calO(M)$ to $\Top$ (or more generally, to a Quillen model category). Following Weiss, we say that a functor 
$$F\colon \calO(M)^{\operatorname{op}}\longrightarrow \calD$$
is {\it good} if (a) $F$ takes isotopy equivalences to weak homotopy equivalences and (b) for any sequence $U_0\subset U_1 \subset \cdots$ of open subsets of $M$, whose union is $U$, the following natural map is an equivalence
\[
F(U)\longrightarrow \underset{i}{\holim} F(U_i).
\]
We need to consider a ``compactly supported'' version of manifold calculus for subsets of $\R^m$, for the study of functors that are invariant with respect to isotopies with bounded support. This is similar to calculus for manifolds with boundary (see~\cite[Section 10]{WeissEmb} and~\cite[Section 9]{DeBrito-Weiss}).
\begin{definition}
Let $\widetilde\calO(\R^m)$ be the poset/category of open subsets of $\R^m$ whose complement is bounded.
\end{definition}
We say that a morphism $U\hookrightarrow V$ in $\widetilde\calO(\R^m)$ is an isotopy equivalence if there is a smooth embedding $V\hookrightarrow U$, that coincides with the identity outside a bounded subset of $V$, such that both compositions are isotopic to the identity via an isotopy that is constant outside a bounded subset of $U$ or $V$, as appropriate. As before, we say that a functor $F\colon \widetilde\calO(\R^m)^{\operatorname{op}}\longrightarrow \calD$ is good if it converts isotopy equivalences to weak homotopy equivalences and filtered unions to homotopy limits.

Let $F$ be  a good contravariant functor from either $\calO(M)$ or $\widetilde\calO(\R^m)$ to a model category $\calD$. We say that $F$ is polynomial of degree $k$, if it takes strongly co-cartesian $k+1$-dimensional cubical diagrams to homotopy cartesian cubical diagrams (this is equivalent to~\cite[Definition 2.2]{WeissEmb}).

Weiss proves that good functors can be approximated by polynomial functors.
Let $F$ be a good contravariant functor either from $\calO(M)$ or $\widetilde\calO(\R^m)$ to $\calD$. For each $k\ge 0$, there is a polynomial functor of degree $k$, which we denote $\ET_kF$, and a natural transformation $F\to \ET_kF$ that is initial (in the homotopy category of functors) among maps from $F$ to a polynomial functor of degree $k$. Moreover, the natural transformation $F\to \ET_kF$ induces an equivalence when evaluated on certain open subsets of $M$ or $\R^m$, and this property characterizes $\ET_kF$.
\begin{definition}
 Let $M$ be a smooth $m$-dimensional manifold. For each $k\ge 0$, define $\calO_k(M)\subset \calO(M)$ to be the subposet consisting of images of smooth embeddings $A\times D^m\hookrightarrow M$, where $A$ is a set with at most $k$ elements. Also define $\widetilde\calO_k(\R^m)\subset \widetilde\calO(\R^m)$ to be the subposet consisting of images of compactly supported smooth embeddings $S\boxtimes D^m\hookrightarrow \R^m$. Here $S$ is a pointed set with at most $k$ non-basepoint elements. Recall that $S\boxtimes D^m$ is the disjoint union of $|S|-1$ balls and one antiball. ``Compactly supported'' means that the embedding $S\boxtimes D^m\hookrightarrow \R^m$ is required to agree with the identity on the antiball outside a bounded set.
\end{definition}
Weiss proves the following theorem. The case of $\calO(M)$ follows from~\cite[Theorems 5.1 and 6.1]{WeissEmb}. The case of $\widetilde\calO(\R^m)$ is equivalent to the case of manifolds with boundary, discussed in~\cite[Section 10]{WeissEmb} and~\cite[Section 9]{DeBrito-Weiss}. 
\begin{theorem}\label{theorem: characterization}
Let $F$ be a good contravariant functor from $\calO(M)$ (resp. $\widetilde\calO(\R^m)$). Then the map $F(U)\longrightarrow \ET_k F(U)$ is a weak equivalence for $U\in\calO_k(M)$ (resp. $U\in\widetilde\calO_k(\R^m)$). Moreover, $\ET_kF$ is uniquely characterized (up to natural equivalence) as the degree $k$ polynomial functor with this property.
\end{theorem}
In fact, $\ET_kF(M)$ (resp. $\ET_kF(\R^m)$) is defined as the homotopy limit of $F(U)$, as $U$ ranges over $\calO_k(M)$ (resp. $\widetilde\calO_k(\R^m)$). In other words, $\ET_kF$ is obtained by restricting $F$ from $\calO(M)$ to $\calO_k(M)$ and then taking derived right Kan extension back to $\calO(M)$.

We will show that with some additional assumptions on $F$ and $M$ one can obtain an ``operadic'' formula for $\ET_kF(M)$. More specifically, we will show that if $M$ is an open subset of $\R^m$, and $F$ is ``context free'', one can express $\ET_kF$ in terms of modules over the little disks operad. 
\begin{remark} Such a reduction can be achieved more generally, whenever $M$ is parallelizable. To achieve this, it might be convenient to replace the little disks operad with the Fulton-McPherson operad, as in~\cite{Turchin13}.
\end{remark}

Fix a dimension $m$. Recall that $\mfld$ is the topologically enriched category whose objects are disjoint union of open subsets of $\R^m$ and whose morphisms are spaces of standard embeddings. We will need a ``compactly supported'' version of this category.


\begin{definition}
Let $\widetilde\mfld$ be the following topologically enriched category. An object of $\widetilde\mfld$ is a pair $(U, U_0)$, where $U$ is a disjoint union of open subsets of $\R^m$, and $U_0$ is a connected component of $U$ that  is the complement of a compact subset of $\R^m$. We refer to $U_0$ as the marked component of $U$. Let $(U, U_0)$ and $(V, V_0)$ be two objects of $\widetilde\mfld$. Morphisms from $(U, U_0)$ to $(V, V_0)$ in $\widetilde\mfld$ are standard embeddings from $U$ to $V$ that take $U_0$ into $V_0$. We denote the space of morphisms from $(U, U_0)$ to $(V, V_0)$ by $\sEmb_0(U, V)$.
\end{definition}
The category $\widetilde\mfld$ is analogous to category $\mathrm{Man}^\partial$ of manifolds with prescribed boundary considered in~\cite[Section 9]{DeBrito-Weiss}. 

Suppose $M$ is an open subset of $\R^m$. Then there are obvious ``inclusion'' functors $\calO(M)\longrightarrow \mfld$ and $\widetilde\calO(\R^m)\longrightarrow \widetilde\mfld$.
\begin{definition}
Let $F$ be a good contravariant functor from $\calO(M)$ (resp. from $\widetilde\calO(\R^m)$) to a topologically enriched model category $\calD$. We say that $F$ is {\em context-free} if it factors (up to natural equivalence) through the category $\mfld$ (resp. $\widetilde\mfld$). 
\end{definition}
To be more explicit, for example in the case when the domain of $F$ is $\calO(M)$, we require that there is a continuous functor $F'\colon \mfld^{\operatorname{op}} \longrightarrow \calD$ such that $F$ is weakly equivalent to the composed functor
\[
\calO(M)^{\operatorname{op}}\longrightarrow\mfld^{\operatorname{op}}\stackrel{F'}{\longrightarrow} \calD.
\]
If $F$ is isomorphic to the composed functor, then we say that $F$ is {\it strictly context-free}.
\begin{remark}
Our definition of ``context-free'' differs slightly from the definition given by de Brito-Weiss~\cite{DeBrito-Weiss} or Turchin~\cite{Turchin13}. Roughly speaking the difference is that those works require a context-free functor to be defined on the category of all smooth manifolds and codimension-zero embeddings, while we are working with what amounts to a category of manifolds with a chosen trivialization of the tangent bundle, and smooth embeddings that respect the trivialization.
\end{remark}
\begin{example}\label{rem: ebar}
Suppose that $M$ is an open subset of $\R^m$, and $\R^m$ is a linear subspace of $\R^n$. Then one may define a functor 
$$\Ebar(-, \R^n)\colon \calO(M)^{\operatorname{op}}\longrightarrow \Top.$$
Recall that $\Ebar(U, \R^n)$ is the homotopy fiber of the map $\Emb(U, \R^n)\longrightarrow \Imm(U, \R^n).$ The definition requires the presence of a basepoint in $\Imm(U, \R^n)$ (or in $\Emb(U, \R^n)$, if one wants the functor to take values in pointed spaces). The basepoint is provided by the inclusion of $M$ into $\R^n$, via $\R^m$. This means that the functor is not strictly context-free, because the basepoint of $\Imm(U, \R^n)$ depends on the inclusion of $U$ into $M$. However, one can show that it is context-free using Smale-Hirsch theory. Let $\inj(\R^m, \R^n)$ be the space of injective linear maps from $\R^m$ to $\R^n$. There is a natural map, given by differentiation
$$\Imm(U, \R^n) \longrightarrow \map(U, \inj(\R^m, \R^n)).$$
By Smale-Hirsch theory, this map is an equivalence if $m+1\le n$. Furthermore, let $\widetilde\inj(\R^m, \R^n)$ be the quotient of $\inj(\R^m, \R^n)$ by the group of positive reals acting by multiplication. We have a homotopy equivalence
$$\Imm(U, \R^n) \stackrel{\simeq}{\longrightarrow} \map(U, \widetilde\inj(\R^m, \R^n)).$$
The space $\map(U, \widetilde\inj(\R^m, \R^n))$ has a canonical basepoint: it is the constant map that sends $U$ to the equivalence class of the fixed linear inclusion of $\R^m$ into $\R^n$. Moreover, for a standard embedding $U \hookrightarrow V$, the induced map
$$\map(V, \widetilde\inj(\R^m, \R^n)) \longrightarrow \map(U, \widetilde\inj(\R^m, \R^n))$$
preserves the basepoint. Therefore the functor
$$U\mapsto  \map(U, \widetilde\inj(\R^m, \R^n))$$
is a context-free functor with values in pointed spaces. Using it as our model for the immersions functor, we obtain that the functor $\Ebar(-, \R^n)$ is context-free, as a functor to spaces (but not as a functor to pointed spaces).

For another example, consider the functor
$$\Ebar_\st(-, \R^n)\colon \widetilde\calO(\R^m) \longrightarrow \Top.$$ Here $\Ebar_\st(U, \R^n)$ is defined to be the homotopy fiber of the map $\Emb_\st(U, \R^n)\longrightarrow \Imm_\st(U, R^n),$ where $\Emb_\st(U, \R^n)$ is the space of smooth embeddings of $U$ into $\R^n$ that agree, outside a bounded set, with a standard embedding into $\R^m$, followed by the inclusion $\R^m\hookrightarrow \R^n$. The space of immersions $\Imm_\st(U, R^n)$ is defined analogously. Again, this functor is context free, although not in the strict sense. This is because $\Imm_\st(U, \R^n)$ is naturally equivalent to the space of pointed maps $$\map_*(U, \widetilde\inj(\R^m, \R^n)).$$ Here by ``pointed'' we mean maps that send everything outside a bounded subset of $U$ to the basepoint in $\widetilde\inj(\R^m, \R^n)$. The space of pointed maps has a canonical basepoint. It is easy to see that the functor $\map_*(-, \widetilde\inj(\R^m, \R^n))$ can be extended to a functor from $\widetilde\mfld$ to pointed spaces, and therefore is context-free. The functor $\Ebar_\st(-, \R^n)$ is equivalent to the homotopy fiber of the map $\Emb_\st(-, \R^n) \longrightarrow \map_*(-, \widetilde\inj(\R^m, \R^n))$, and thus it also is context-free.
\end{example}

Recall that $\calF(\balls_m)$ is the full subcategory of $\mfld$ whose objects are finite disjoint unions of copies of the unit ball in $\R^m$. For $k\ge 0$, let $\calF_{\le k}(\balls_m)\subset \calF(\balls_m)$ be the subcategory of unions of at most $k$ balls. Similarly $\widetilde\Gamma(\balls_m)$ is the full subcategory of $\widetilde\mfld$ whose objects are a finite union of standard balls and one antiball. The antiball will be the marked component. Let $\widetilde\Gamma_{\le k}(\balls_m)$ be the full subcategory consisting of unions of at most $k$ balls and the antiball.

Suppose $F\colon \calO(M)\longrightarrow \Top$ is a context-free contravariant functor. Then $F$ gives rise to a contravariant functor from $\calF(\balls_m)$ to $\Top$, which we denote with the same letter $F$. By Lemma~\ref{lem: right}, $F$ can be thought of as a right module over the operad $\balls_m$. Similarly, a context-free contravariant functor from $\widetilde\calO(\R^m)$ to $\Top$ can be identified with an infinitesimal bimodule over $\balls_m$ by Propositions~\ref{prop: infinitesimal bimodule} and~\ref{prop: description}.

Given an operad $O$ and right modules $P$ and $Q$ over $O$, let $\underset{O}{\hRmod}(P, Q)$ denote the derived space of morphisms from $P$ to $Q$. It is the space of morphisms from a cofibrant replacement of $P$ to a fibrant replacement of $Q$ (we refer here to the projective model structure on the category of modules, in which weak equivalences and fibrations are defined degree-wise). Let $\underset{O}{\hRmod_{\le k}}(P, Q)$ denote the derived morphism space of modules truncated at $k$. It can be thought of as the space of derived natural transformations of functors on the category $\calF_{\le k}(O)$ - the full subcategory of $\calF(O)$ consisting of sets of cardinality at most $k$. Similarly, if $P$ and $Q$ are infinitesimal bimodules over $O$, we let $\underset{O}{\hWBimod}(P, Q)$ denote the derived space of infinitesimal bimodule maps from $P$ to $Q$, and let $\underset{O}{\hWBimod_{\le k}}(P, Q)$ denote the derived mapping space of $k$-truncated infinitesimal bimodules. Again, note that it can be thought of as a space of derived natural transformations between functors on $\widetilde\Gamma_{\le k}(\balls_m)$.

%
%

The following theorem is similar to one of de Brito-Weiss~\cite{DeBrito-Weiss}, and the proof is similar too. The difference is that they work with all manifolds and therefor their result involves the framed little disks operad. We restrict ourselves to the category of manifolds with a chosen trivialization, and therefore our statement is phrased in terms of the unframed little disks operad.
\begin{proposition}\label{prop: context-free}
Let $M$ be an open subset of $\R^m$. Suppose $F$ is a good contravariant functor from $\calO(M)$ to a model category $\calD$ that is enriched, tensored and cotensored over topological spaces. Moreover, suppose that $F$ is context-free, so we may think of $F$ as a right module over $\balls_m$. Then there is a natural equivalence for all $k\le \infty$
\[
\ET_kF(U)\simeq \underset{\balls_m}{\hRmod_{\le k}}\left(\sEmb(-, U), F(-)\right).
\]
Similarly, if $F$ is a good, contravariant and context-free functor from $\widetilde\calO(\R^m)$ to $\calD$, then there is an equivalence for $k\le \infty$
\[
\ET_kF(U)\simeq \underset{\balls_m}{\hWBimod_{\le k}}\left(\sEmb(-, U), F(-)\right).
\]
\end{proposition}
\begin{proof}
First we consider the case when $F$ is a good functor on $\calO(M)$. Let $V$ be a fixed object of $\calF_{\le k}(\balls_m)$, and suppose $V$ is the disjoint union of $i$ balls. Consider the covariant functor from $\mfld$ to $\Top$ given by the formula $U\mapsto \sEmb(V, U)$. Notice that it is naturally equivalent to the functor that associates to $U$ the configuration space of ordered $i$-tuples of points in $U$. It follows that it is an isotopy functor in the sense that if $U \longrightarrow U_1$ is a standard embedding that happens to be an isotopy equivalence, then the induced map $ \Emb(V, U)\longrightarrow  \Emb(V, U_1)$ is a homotopy equivalence. It also takes filtered unions to filtered homotopy colimits. Furthermore, it is easy to show that this functor of $U$ takes $i+1$-dimensional strongly cocartesian cubical diagrams to homotopy cocartesian cubical diagrams. It follows that for any space $Y$, the functor $U\mapsto \map(\Emb(V, U), Y)$ is a good functor that is polynomial of degree $k$. Recall that the derived space of module maps is a derived space of natural transformations. By a well-known construction, our space of derived natural transformation can be presented as a homotopy inverse limit of mapping spaces, which in our case have the form $ \map(\Emb(V, U), Y)$. Therefore our construction of $\ET_kF(U)$ is a homotopy limit of good functors of degree $k$. It follows that (our construction of) $\ET_kF$ is itself a good functor that is polynomial of degree $k$. Furthermore, there is a tautological transformation from $F$ to our construction of $\ET_kF$. Note that if $U$ happens to be a union of at most $k$ balls, or in other words an object of $\calF_{\le k}(\balls_m)$, then $\Emb(-, U)$ is a representable contravariant functor from $\calF_{\le k}(\balls_m)$ to $\Top$. By enriched Yoneda lemma, the transformation $F(U)\longrightarrow \ET_kF(U)$ induces an equivalence when $U$ is the union of at most $k$ balls. It now follows from Theorem~\ref{theorem: characterization} that our construction of $\ET_kF$ is indeed the $k$-th Taylor approximation of $F$.

The case when $F$ is a functor on $\widetilde\calO(\R^m)$ is done similarly. The point to note is this. Suppose that $V$ is an object of $\widetilde\mfld_k$. In other words, $V$ is the disjoint union of $i$ balls and an antiball. Let $U$ be an object of $\widetilde\calO(\R^m)$. Then $\sEmb(V, U)$ is still naturally equivalent to the configuration space of $i$-tuples of points in $U$. The rest of the argument proceeds in the same way as before.
\end{proof}
We are now ready to state the main result of this section, which is a description of the Taylor tower of the functors $\Ebar(M, \R^n)$ and $\Ebar_\const(\R^m, \R^n)$ in terms of module maps over $\balls_m$.

As before, fix a linear inclusion $\R^m\hookrightarrow \R^n$, and assume that $m< n$. This induces a map of operads $\balls_m\to \balls_n$, and in particular endows $\balls_n$ with the structure of an infinitesimal bimodule over $\balls_m$ and also of a right module over $\balls_m$. 

Let $M$ be an open subset of $\R^m$. Recall that $\sEmb(-, M)$ is a right module over $\balls_m$.
%

\begin{theorem}\label{theorem: operadic}
Let $M$ be an open subset of $\R^m$ and suppose $\R^m$ is a linear subspace of $\R^n$, with $m<n$. For all $k\le \infty$, there are equivalences
\[
\ET_k\Ebar(M, \R^n)\simeq \underset{\balls_m}{\hRmod_{\le k}}(\sEmb(-, M), \balls_n)
\]
\[
\ET_k\Ebar_\const(\R^m, \R^n)\simeq \underset{\balls_m}{\hWBimod_{\le k}}(\balls_m, \balls_n).
\]
\end{theorem}
\begin{proof}
Let us first consider the case of $\Ebar(M, \R^n)$. This case is a nearly immediate consequence of Proposition~\ref{prop: context-free}. We saw in Example~\ref{rem: ebar} that the functor $\Ebar(-, \R^n)$ is context-free. It is not hard to see that $\Ebar(-, \R^n)$ is equivalent to $\sEmb(-, \R^n)$ as a right module over $\balls_m$. In fact, this claim is proved as~\cite[Proposition 7.1]{ALV}. On the other hand, $\sEmb(-, \R^n)$ is equivalent to $\balls_n$ as a right module over $\balls_m$ (remark~\ref{remark: equivalent infinitesimal bimodules}).

Now let us consider the case of $\Ebar_\const(\R^m, \R^n)$. This case is proved similarly, starting with Proposition~\ref{prop: context-free}. The functor $U\mapsto \Ebar_c(U, \R^n)$ is not strictly context-free - it does not extend from $\widetilde\calO(\R^m)$ to $\widetilde \mfld$. On the other hand, there is a natural inclusion of functors on $\widetilde\calO(\R^m)$
$$
\Ebar_\const(-, \R^n) \longrightarrow \Ebar_\st(-, \R^n)
$$
where $\Ebar_\st(-, \R^n)$ is the functor introducted in Example~\ref{rem: ebar}. It is easy to show that this inclusion is a weak equivalence. In fact, for each $U$ the space $\Ebar_\st(U, \R^n)$ deformation retracts onto $\Ebar_\const(U, \R^n)$. As explained in Example~\ref{rem: ebar}, the functor $\Ebar_\st(-, \R^n)$ is context-free. It is easy to check that there is an equivalence of functors on $\widetilde\Gamma(\balls_m)$
$$
\Ebar_\st(-, \R^n) \simeq \sEmb(-, \R^n).
$$
Or in other words, $\Ebar_\st(-, \R^n)$ and $\sEmb(-, \R^n)$ are equivalent as infinitesimal bimodules over $\balls_m$. On the other hand, $\sEmb(-, \R^n)$ is equivalent to $\balls_n$ as an infinitesimal bimodule over $\balls_m$ by Remark~\ref{remark: equivalent infinitesimal bimodules}. The theorem follows.

\end{proof}
\begin{remark}
Another proof of the second part of the theorem is given in~\cite{Turchin13}, using a different approach. In that paper, the Fulton-McPherson operad is used in place of the little disks operad.
\end{remark}
\begin{remark}
The operad $\balls_1$ is weakly equivalent to the associative operad. It is not difficult to show that infinitesimal bimodules over the associative operad (considered as a non-$\Sigma$-operad) are the same thing as cosimplicial spaces. This explains why there is a cosimplicial model for the Taylor tower of the space of long knots~\cite{Sinha}.
\end{remark}

In subsequent parts of the paper we will need  the following homological version of Theorem~\ref{theorem: operadic}.
\begin{proposition}\label{prop: chainlevel}
Let $M$ be an open subset of $\R^m$ and suppose $\R^m$ is a linear subspace of $\R^n$, with $m<n$. For all $k\le\infty$, there are equivalences
\[
\ET_k\chains(\Ebar(M, \R^n))\simeq \underset{\chains(\balls_m)}{\hRmod_{\le k}}(\chains(\sEmb(-, M)), \chains(\balls_n))
\]
\[
\ET_k\chains(\Ebar_\const(\R^m, \R^n))\simeq \underset{\chains(\balls_m)}{\hWBimod_{\le k}}(\chains(\balls_m), \chains(\balls_n)).
\]
\end{proposition}
Proposition~\ref{prop: chainlevel} is proved in the same way as Theorem~\ref{theorem: operadic}. Notice that the right hand sides of the formulas can be written as 
\[
\ET_k\chains(\Ebar(M, \R^n))\simeq \underset{\balls_m}{\hRmod_{\le k}}(\sEmb(-, M), \chains(\balls_n))
\]
and
\[
\ET_k\chains(\Ebar_\const(\R^m, \R^n))\simeq \underset{\balls_m}{\hWBimod_{\le k}}(\balls_m, \chains(\balls_n)).
\]

\section{Applying the formality theorem}\label{section: applying formality}
As usual, let $M$ be an open subspace of $\R^m$ and  let us suppose that $\R^m$ is a subspace of another Euclidean space $\R^n$ such that $n\ge 2m+1$. 
We saw in Proposition~\ref{prop: chainlevel} that the Taylor towers of $\chains(\Ebar(M, \R^n))$ and $\chains(\Ebar_\const(\R^m, \R^n))$ can be expressed in terms of maps of modules over the operad $\chains(\balls_m)$. In this section we use Kontsevich's formality theorem, very much in the spirit of~\cite{ALV}, to deduce that if one works over the reals then $\chains(\balls_n)$ can be replaced with $\HH(\balls_n)$ in the formulas of Proposition~\ref{prop: chainlevel} (and ultimately, the same conslusion holds over the rationals). 

Recall that $\Com$ is the commutative operad in any given symmetric monoidal category. Thus $\Com(i)$ is isomorphic to the unit object in every arity $i$. When we need to emphasize the background category, we write $\Com^{\Top}$ or $\Com^\Ch$. There is a trivial map of operads $\balls_m\longrightarrow \Com^{\Top}$, which induces a map of operads $\chains(\balls_m)\longrightarrow \Com^{\Ch}$. For $n\ge 2$, since all the spaces in the operad $\balls_n$ are connected, there is an isomorphism of operads $\Com^{\Ch} \cong \HH_0(\balls_n)$. Finally note that there is a map of operads $\HH_0(\balls_n)\longrightarrow \HH(\balls_n)$. Together these maps endow
$\HH(\balls_n)$ with the structure of an infinitesimal bimodule (and in particular a right module) over $\Com^{\Ch}$, and over $\chains(\balls_m)$. This structure is referred to in the statement of the following proposition.
\begin{proposition}\label{prop: first reduction}
Suppose that $2m+1\le n$. There are equivalences
$$
\ET_k\left(\chains^\Q(\Ebar(M, \R^n)) \right)\simeq \underset{\chains(\balls_m)}{\hRmod_{\le k}}(\chains(\sEmb(-, M)), \HH(\balls_n;\Q))$$
and
$$
\ET_k \left(\chains^\Q(\Ebarc(\R^m, \R^n)) \right) \simeq \underset{\chains(\balls_m)}{\hWBimod_{\le k}}(\chains(\balls_m), \HH(\balls_n;\Q)).
$$
\end{proposition}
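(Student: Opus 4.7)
The plan is to start from the equivalences \eqref{eq: Rmod model} and \eqref{eq: WBimod model}, which already express $\ET_k\chains(\Ebar(M,\R^n))$ and $\ET_k\chains(\Ebar_c(\R^m,\R^n))$ as derived mapping objects of right modules and weak bimodules over $^\delta\balls_m$ with target $\chains(\balls_n)$. Tensoring the source and target with $\Q$ and invoking the fact that $\hRmod_{\le k}$ and $\hWBimod_{\le k}$ send quasi-isomorphisms in the target to quasi-isomorphisms, it suffices to produce a zigzag of quasi-isomorphisms between $\chains^\Q(\balls_n)$ and $\HH(\balls_n;\Q)$ in the categories of right $^\delta\balls_m$-modules and of weak $^\delta\balls_m$-bimodules, compatible with the ``stupid'' $^\delta\balls_m$-action on $\HH(\balls_n;\Q)$ described just before the proposition.

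The key input is Kontsevich's \emph{relative} formality theorem, proved over $\Q$ in the form we need by Lambrechts-Voli\'c. It says that when $n\ge 2m+1$, the morphism of chain operads $\chains^\Q(\balls_m)\to \chains^\Q(\balls_n)$ induced by the linear inclusion $\R^m\hookrightarrow \R^n$ is quasi-isomorphic, through a zigzag of quasi-isomorphisms of operad maps, to $\HH(\balls_m;\Q)\to \HH(\balls_n;\Q)$. In particular this exhibits $\chains^\Q(\balls_n)$ and $\HH(\balls_n;\Q)$ as quasi-isomorphic as weak bimodules (and hence as right modules) over $\chains^\Q(\balls_m)$. This is the one place where the hypothesis $n\ge 2m+1$ enters.

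To move from $\chains^\Q(\balls_m)$-modules to $^\delta\balls_m$-modules, I would pull back the zigzag along the canonical map of operads $^\delta\balls_m\to \chains^\Q(\balls_m)$ (which embeds underlying sets of points as $0$-chains). Restriction of scalars along a morphism of operads preserves quasi-isomorphisms of modules, so the zigzag descends and yields the required weak equivalences in $\hRmod_{\le k}$ and $\hWBimod_{\le k}$ over $^\delta\balls_m$. Moreover, since for $n\ge 2$ one has $\HH_0(\balls_n;\Q)\cong \HH_0(\Com;\Q)$, the pulled-back $^\delta\balls_m$-action on $\HH(\balls_n;\Q)$ factors through $\Com$ and agrees with the stupid action; thus the target appearing in the statement is the correct one. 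Substituting the replacement into \eqref{eq: Rmod model} and \eqref{eq: WBimod model} proves both equivalences.

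The main obstacle is twofold. First, the relative formality zigzag must be shown to be compatible with the full \emph{weak bimodule} structure, not only with the operad-map structure, because the second equivalence of the proposition lives in $\hWBimod_{\le k}$; verifying that the maps in the zigzag intertwine both the left and right partial compositions of Definition~\ref{def: weak bimodule} is the delicate check. Second, one needs the homotopy invariance of $\hRmod_{\le k}$ and $\hWBimod_{\le k}$ under target quasi-isomorphisms, which presupposes a suitable model structure on truncated right modules and truncated weak bimodules over $^\delta\balls_m$; this is essentially formal, but requires tracking cofibrant and fibrant replacements as in \cite{ALV}.
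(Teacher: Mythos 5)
Your proposal starts from the right place (equations~\eqref{eq: Rmod model} and \eqref{eq: WBimod model}) and correctly identifies the relative formality theorem as the key input, but there are two genuine gaps.

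First, Kontsevich's relative formality theorem (in the Lambrechts--Voli\'c form) is a theorem over $\R$, not over $\Q$: the construction of the intermediate operad uses real integration over configuration spaces. One cannot simply invoke it ``over $\Q$.'' The paper first establishes the equivalence with $\R$ coefficients, and then descends to $\Q$ by a separate finiteness argument (in the spirit of \cite[Proposition~8.3]{ALV}), which you have omitted entirely.

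Second, and more importantly, the formality theorem gives a \emph{zigzag}
$$\chains^\R(\balls_m)\longleftarrow \calD_m \longrightarrow \HH(\balls_m;\R),$$
together with a compatible zigzag of weak bimodules $\chains^\R(\balls_n)\leftarrow \calD_n\rightarrow \HH(\balls_n;\R)$. This does \emph{not} directly exhibit $\chains^\R(\balls_n)$ and $\HH(\balls_n;\R)$ as quasi-isomorphic weak bimodules over the single operad $\chains^\R(\balls_m)$, contrary to what you assert; the middle terms live over $\calD_m$, and there is no evident operad map $^\delta\balls_m\to\calD_m$ along which to restrict. The actual argument in the paper is different: one observes that because $m<n$ the operad map $\HH(\balls_m;\R)\to\HH(\balls_n;\R)$ kills everything in positive degree, so $\HH(\balls_n)$ splits as a direct sum of modules concentrated in single homological degrees; one then transports this splitting via derived left Kan extension along $\calD_m\to\chains^\R(\balls_m)$ to conclude that $\chains^\R(\balls_n)$ itself splits (as a module over $\chains^\R(\balls_m)$, hence over $^\delta\balls_m$) into pieces concentrated in single degrees, which forces it to be equivalent to its homology as a $^\delta\balls_m$-module. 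This splitting argument is the central mechanism, and it does not appear in your proposal; without it, the passage from a zigzag over a zigzag of operads to a quasi-isomorphism of modules over the fixed operad $^\delta\balls_m$ is unjustified.
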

\begin{proof}
The first assertion of the proposition is essentially the same as~\cite[Theorem 9.2]{ALV}. We will prove the second assertion. The proof of the first assertion is similar, but easier. The idea of our proof here is the same as in [op. cit.]. The main point is that since the category of infinitesimal bimodules over an operad is equivalent to a category of diagrams (Proposition~\ref{prop: infinitesimal bimodule}), just like the category of right modules, all the formal manipulations that were done in~\cite{ALV} with right modules can also be done with infinitesimal bimodules. 

The main ingredient of the proof is Kontsevich's formality theorem~\cite{Kontsevich}, or more precisely the ``relative'' version of the theorem, introduced by Lambrechts and Volic~\cite{LV}. This theorem is only known to hold over $\R$ rather than over $\Q$, so we will first prove the proposition over $\R$, and then conclude it over $\Q$. 

Relative formality says that under the assumption $2m+1\le n$, the map of operads
$$\chains^\R(\balls_m)\longrightarrow\chains^\R(\balls_n)$$ is equivalent, via a chain of quasi-isomorphisms between maps of operads, to the maps of operads
$$\HH(\balls_m; \R)\longrightarrow  \HH(\balls_n; \R).$$
This means that there is a diagram of operads, where $\calD_m, \calD_n$ are some intermediate operads, and the horizontal homomorphisms are quasi-isomorphisms
$$
\begin{array}{ccccc}
\chains^\R(\balls_m) & \stackrel{\simeq}{\longleftarrow} & \calD_m &\stackrel{\simeq}{\longrightarrow}& \HH(\balls_m; \R)\\
\downarrow & &\downarrow & & \downarrow \\
\chains^\R(\balls_n) &\stackrel{\simeq}{ \longleftarrow} & \calD_n &\stackrel{\simeq}{\longrightarrow}& \HH(\balls_n; \R)
\end{array}
$$
Recall that $m<n$. It follows that for each $i$, the map $\balls_m(i)\to \balls_n(i)$ is null-homotopic, and in particular induces the zero homomorphism on homology above degree zero. In other words, the map $\HH(\balls_m; \R)\longrightarrow \HH(\balls_n, \R)$ on the right side of the diagram factors through $\HH_0(\balls_n, \R)$. It follows that $\HH(\balls_n)$ is equivalent, as a module over $\HH(\balls_m)$, to the direct sum $\bigoplus_{i=0}^\infty \HH_i(\balls_n)$. Next, we can consider $\HH(\balls_n)$ as a module over $\calD_m$ via the pull-back functor. Then, using the identification of the category of modules with a category of diagrams, we can use the derived left Kan extension (which is the derived left adjoint to the pull-back functor) along the map of operads $\calD_m\rightarrow \chains^\R(\balls_m)$ to get a module over $\chains^\R(\balls_m)$. Let us denote this module $ {\mathrm L}\HH(\balls_n; \R)$. It follows from the formality theorem that $ {\mathrm L}\HH(\balls_n; \R)$ is weakly equivalent to $\chains^\R(\balls_n)$ as module over $\chains^\R(\balls_m)$. On the other hand, since $ \HH(\balls_n; \R)$ splits, in the category of modules over $ \HH(\balls_m; \R)$, as a direct sum of modules each concentrated in a single homological degree, it follows that $ {\mathrm L}\HH(\balls_n; \R)$ also is equivalent, as a $\chains(\balls_m)$-module, to a direct sum of modules, with each summand concentrated in a single homological degree. It follows, using Corollary~\ref{cor: formal} and the identification of the category of modules over $\chains(\balls_m)$ with a category of diagrams enriched over $\Ch$, that $ {\mathrm L}\HH(\balls_n; \R)$ is equivalent to $\HH\left( {\mathrm L}\HH(\balls_n; \R)\right)$ as a module over $\chains^\R(\balls_m)$.
 It now follows that $\chains^\R(\balls_n)$ itself 
is fact equivalent to $\HH(\balls_n; \R)$ as a module over $ \chains(\balls_m)$. 

Substituting the equivalence of $\balls_m$-modules $\chains^\R(\balls_n)\simeq \HH(\balls_n; \R)$ into the formulas of Proposition~\ref{prop: chainlevel} gives the desired result over $\R$. That is, we proved that 
$$
\ET_k \left(\chains^\R(\Ebarc(\R^m, \R^n)) \right) \simeq \underset{\chains(\balls_m)}{\hWBimod_{\le k}}(\chains(\balls_m), \HH(\balls_n;\R)).
$$
To finish the proof we observe that there are equivalences
$$
\ET_k \left(\chains^\R(\Ebarc(\R^m, \R^n)) \right) \simeq
\ET_k \left(\chains^\Q(\Ebarc(\R^m, \R^n)) \right) \otimes \R$$
and
$$
\underset{\chains(\balls_m)}{\hWBimod_{\le k}}(\chains(\balls_m), \HH(\balls_n;\R))\simeq \underset{\chains(\balls_m)}{\hWBimod_{\le k}}(\chains(\balls_m), \HH(\balls_n;\Q))\otimes \R.$$
These equivalences follow from an easy compactness argument similar to one done in~\cite{ALV}. It follows that $\ET_k \left(\chains^\Q(\Ebarc(\R^m, \R^n)) \right)$ and $\underset{\chains(\balls_m)}{\hWBimod_{\le k}}(\chains(\balls_m), \HH(\balls_n;\Q))$ are rational chain complexes that become equivalent after tensoring with $\R$. Therefore they are abstractly equivalent as rational complexes.
 \end{proof}

\section{From the little disks operad to the commutative operad}\label{section: change of operads}
Our goal in this section is to show that the formulas in Proposition~\ref{prop: first reduction} can be rewritten in terms of modules over the commutative operad instead of modules over the operad $\chains(\balls_m)$. Let us review again the formulas of the proposition. Taking $k=\infty$, they can be written as follows.

$$
\ET_\infty\chains^\Q(\Ebar(M, \R^n)) \simeq \underset{\chains(\balls_m)}{\hRmod}(\chains(\sEmb(-, M)), \HH(\balls_n;\Q))$$
and
$$
\ET_\infty \chains^\Q(\Ebarc(\R^m, \R^n))  \simeq \underset{\chains(\balls_m)}{\hWBimod}(\chains(\balls_m), \HH(\balls_n;\Q)).
$$

Recall that there is a map of operads $\chains(\balls_m)\longrightarrow \Com$, and the action of $\chains(\balls_m)$ on $\HH(\balls_n;\Q)$ pulls backs from an action of $\Com$. In other words, $\HH(\balls_n; \Q)$ is in the image of the restriction functor from $\Com$-modules to $\chains(\balls_m)$-modules. Here by ``modules'' we mean either right modules or infinitesimal bimodules. In both cases, the restriction functor has a (derived) left adjoint. Since the categories of right modules and infinitesimal bimodules are equivalent to certain diagram categories, the derived left adjoint can be described as a derived left Kan extension. Let $\ind$ denote the functor from right modules over $\chains(\balls_m)$ to right modules over $\Com$ that is derived left adjoint to the forgetful functor. Similarly, let $\widetilde\ind$ denote the analogous induction functor for infinitesimal bimodules. More generally, let $\ind_{\le k}$ be the analogous functors from $k$-truncated right modules over  $\chains(\balls_m)$ to $k$-truncated right modules over $\Com$. Finally, let $\widetilde \ind_{\le k}$ be the analogous functor between categories of $k$-truncated infinitesimal bimodules.
We have the following tautological consequence of Proposition~\ref{prop: first reduction}.
\begin{corollary}\label{cor: left induction}
Suppose that $2m+1\le n$, $M$ is an open subspace of $\R^m$, and $\R^m$ is a subspace of $\R^n$. Then there are equivalences
$$
\ET_k\chains^\Q(\Ebar(M, \R^n)) \simeq \underset{\Com}{\hRmod_{\le k}}\left(\ind_{\le k}\chains(\sEmb(-, M)), \HH(\balls_n;\Q)\right)$$
and
$$
\ET_k \chains^\Q(\Ebarc(\R^m, \R^n)) \simeq \underset{\Com}{\hWBimod_{\le k}}\left(\widetilde \ind_{\le k}\,\chains(\balls_m), \HH(\balls_n;\Q)\right).
$$
\end{corollary}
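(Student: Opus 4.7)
The plan is to deduce the corollary directly from Proposition~\ref{prop: first reduction} by a two-step formal manipulation: first convert the set-valued sources of the mapping complexes into chain-complex-valued sources, and then apply the derived adjunction between restriction and induction along the operad map $^\delta\balls_m\longrightarrow \Com$.

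For the first step, I use the tautological adjunction between the free-abelian-group functor and the forgetful functor. If $F$ is a functor valued in sets and $G$ a functor valued in chain complexes (both on one of the diagram categories $\calF(^\delta\balls_m)$ or $\widetilde\Gamma(^\delta\balls_m)$ that encode right modules / weak bimodules over $^\delta\balls_m$ via Lemma~\ref{lemma: right module} and Proposition~\ref{prop: weak bimodule}), then for every object $A$ one has $G(A)^{F(A)}\cong\hom(\chains(F(A)),G(A))$. This isomorphism is natural, compatible with the cofibrant replacements used to define $\hNat$, and therefore identifies the derived natural-transformation object from $F$ with the derived natural-transformation object from $\chains(F)$. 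Applying this to $F={^\delta\!\sEmb}(-,M)$ (resp.\ ${^\delta\!\sEmb}(-,\R^m)$) and $G=\HH(\balls_n;\Q)$ rewrites the equivalences in Proposition~\ref{prop: first reduction} as
$$\ET_k\chains^\Q(\Ebar(M,\R^n))\simeq\underset{^\delta\balls_m}{\hRmod_{\le k}}\bigl(\chains(\,{^\delta\!\sEmb}(-,M)),\HH(\balls_n;\Q)\bigr)$$
and the analogous formula for $\Ebar_c(\R^m,\R^n)$.

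For the second step I invoke the standard derived adjunction associated to a map of operads. The operad map $^\delta\balls_m\longrightarrow\Com$ induces restriction functors between categories of right modules (resp.\ weak bimodules) with values in rational chain complexes. Both categories are, by Lemma~\ref{lemma: right module} and Proposition~\ref{prop: weak bimodule}, equivalent to categories of enriched contravariant functors out of the small enriched categories $\calF(^\delta\balls_m), \calF(\Com)=\calF$, respectively $\widetilde\Gamma(^\delta\balls_m), \widetilde\Gamma(\Com)=\Gamma$. In these diagram-category incarnations, the restriction functor has a left adjoint given by ordinary left Kan extension, which admits a Quillen left derived functor $\ind$ (resp.\ $\widetilde\ind$) with respect to the projective model structures on the two sides. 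As noted in the excerpt, the weak bimodule structure on $\HH(\balls_n;\Q)$ is in the image of restriction along $^\delta\balls_m\longrightarrow\Com$, i.e.\ $\HH(\balls_n;\Q)$ is (fibrantly) the restriction of a $\Com$-module. Consequently the derived adjunction yields
$$\underset{^\delta\balls_m}{\hRmod_{\le k}}\bigl(\chains(\,{^\delta\!\sEmb}(-,M)),\HH(\balls_n;\Q)\bigr)\simeq\underset{\Com}{\hRmod_{\le k}}\bigl(\ind\,\chains(\,{^\delta\!\sEmb}(-,M)),\HH(\balls_n;\Q)\bigr),$$
and identically for the weak-bimodule version with $\widetilde\ind$ in place of $\ind$. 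Combining with the first step gives the two stated equivalences.

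The only substantive point that needs checking—and hence the potential obstacle—is that the truncation of the mapping categories at level $k$ is respected by both manipulations. For the first step this is automatic, since $\chains(-)$ and $\hom(\chains(-),-)$ act objectwise. For the second, one needs to know that the derived left Kan extension along the truncated operad map ${^\delta\balls_m}_{\le k}\longrightarrow \Com_{\le k}$ is still left adjoint to restriction at the truncated level, and that the adjunction isomorphism $\hNat_{\le k}(\ind F,G)\simeq\hNat_{\le k}(F,G)$ passes to the derived level. This holds because the relevant diagram categories are cofibrantly generated (with generators in arity $\le k$ on each side), so the projective model structure on truncated modules exists and the Kan extension / restriction pair is a Quillen adjunction; thus the familiar argument (cofibrant source on one side, fibrant target on the other) supplies the derived adjunction. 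Once this is in place the rest is purely formal, consistent with the ``tautological'' nature advertised in the statement.
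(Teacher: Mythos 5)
Your proposal matches the paper's argument: the paper also proceeds by first rewriting the formulas of Proposition~\ref{prop: first reduction} via the tautological isomorphism $C^A\cong\hom(\chains(A),C)$, and then invoking the derived restriction/induction adjunction along $^\delta\balls_m\to\Com$ (which exists because both module categories are diagram categories), treating the result as a tautological consequence. Your extra discussion of why the truncation at level $k$ is preserved is a reasonable elaboration of a point the paper leaves implicit, but the route is the same.
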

Our next task is to describe explicitly the right $\Com$-module $\ind\chains\left(\sEmb(-, M)\right)$, and the infinitesimal $\Com$-bimodule $\widetilde \ind \,\chains\left(\balls_m\right)$. Since the singular chains functor $\chains$ commutes with homotopy colimits, it commutes with derived left Kan extensions. Therefore, it is enough to compute $\ind\left(\sEmb(-, M)\right)$ and $\widetilde \ind \left(\balls_m\right)$. Recall that a right $\Com$-module in $\Top$ is the same thing as a contravariant functor on the category $\calF$ of unpointed finite sets, and a infinitesimal $\Com$-bimodule is the same thing as a contravariant functor on the category $\Gamma$ of pointed finite sets (Corollaries~\ref{cor: right com modules are F modules} and~\ref{cor: weak com bimods are gamma mods} respectively). Let $M^-$ be the right $\Com$-module corresponding to the functor from $\calF$ to $\Top$ $A\mapsto M^A$. Let $(S^m)^-$ be the infinitesimal $\Com$-bimodule corresponding to the functor from $\Gamma$ to $\Top_*$, $T\mapsto \map_*(T, S^m)$. Here $S^m$ is the one-point compactification of $\R^m$, considered as a pointed space with $\infty$ being the basepoint. In the following proposition we use the same notation for a module and its $k$-truncation.
\begin{proposition}\label{prop: explicit Kan}
For each $k\le \infty$ there is an equivalence of right $\Com$-modules.
$$\ind_{\le k}(\sEmb(-, M)) \simeq M^-,$$
and an equivalence of infinitesimal $\Com$-bimodules
$$\widetilde \ind_{\le k}\left(\balls_m\right)\simeq (S^m)^-.$$
\end{proposition}
\begin{proof}
Let us prove the first assertion first. Recall that $\calF_{\le k}({\balls_m})$ is the category whose objects are disjoint unions of at most $k$ copies of the standard $m$-ball, and whose morphism spaces are spaces of standard embeddings. $\calF_{\le k}(\Com)$ is the category of finite sets with at most $k$ elements. There is a functor $\pi_0\colon\calF_{\le k}(\balls_m)\longrightarrow\calF_{\le k}(\Com)$, which takes a disjoint union of balls to the set of path components, and a standard embedding to the map that it induces on path components.

The functor $\ind_{\le k}$ is homotopy left Kan extension along $\pi_0$. Our goal is to show that the homotopy left Kan extension along $\pi_0$ of the contravariant functor $$\sEmb(-, M)\colon \calF_{\le k}(\balls_m) \longrightarrow \Top$$ 
is equivalent to the functor
$$M^-\colon \calF_{\le k}(\Com)\longrightarrow \Top.$$

Let us consider first the case when $M$ is itself an object of $\calF_{\le k}(\balls_m)$. That is, when $M$ is the disjoint union of at most $k$ standard balls in $\R^m$. Let us relabel $M=U$ in this case. The functor $\sEmb(-, U)$ is the free functor generated by $U$. It follows, by enriched Yoneda lemma, that the strict left Kan extension of $\sEmb(-, U)$ along $\pi_0$ is the free contravariant functor generated by $\pi_0(U)$. Moreover, for free functors, homotopy left Kan extension is naturally equivalent to the strict Kan extension. Therefore, 
$$\ind\left(\sEmb(-, U)\right)\simeq\pi_0(U)^-.$$
Moreover, the path components of $U$ are contractible, so the natural map $U\longrightarrow \pi_0(U)$ is a homotopy equivalence. It follows that there is a natural equivalence, $\pi_0(U)^-\simeq U^-$. We have obtained an equivalence, natural both in $-$ and in $U$
$$\ind\left(\sEmb(-, U)\right)\simeq U^-.$$

Now let $M$ be a general open subspace of $\R^m$. We claim that if $-$ has at most $k$ components then $\sEmb(-, M)$ is equivalent to the homotopy colimit of $\sEmb(-, U)$, where $U$ ranges over the poset of subsets of $M$ that are the union of at most $k$ standard balls. To prove the claim, suppose that $-$ is the union of $i$ balls. Then $\sEmb( -, U)$ is naturally equivalent to the space $\Config{i}{U}$ of ordered $i$-tuples of distinct points in $U$. So it is enough to prove that the space $\Config{i}{M}$ is equivalent to the homotopy colimit of spaces $\Config{i}{U}$. Notice that the spaces $\Config{i}{U}$ form an open cover of $\Config{i}{M}$, and that every finite intersection of elements of this open cover is a union of elements of this open cover (in other words, the spaces $\Config{i}{U}$ form a basis for the topology on $\Config{i}{M}$). The assertion now follows from~\cite[Corollary 1.4]{DD}.
A similar argument shows that, $M^-$ is equivalent to the homotopy colimit of $U^-$, where $U$ ranges over the same category. The result for a general $M$ follows, because $\ind$ commutes with homotopy colimits.

Now let us consider the second assertion. The idea of the proof is similar. Recall that $\balls_m$ is equivalent, as an infinitesimal bimodule over $\balls_m$, to the functor $\sEmb(-, \R^m)$ (Lemma~\ref{lemma: equivalent infinitesimal bimodules} and Remark~\ref{remark: equivalent infinitesimal bimodules}). Let us recall the picture of the category $\widetilde\Gamma_{\le k}(\balls_m)$ that we are working with. An object of this category is a disjoint union of at most $k$ standard balls in $\R^m$, together with one ``antiball'', that is a copy of the complement of the unit ball in $\R^m$. So an object of $\widetilde\Gamma_{\le k}(\balls_m)$ has the form $S\boxtimes D^m$, where $S$ is a pointed finite set with at most $k$ elements besides the basepoint. Morphisms are standard embeddings between such objects. There is a functor $$\widetilde\Gamma_{\le k}(\balls_m)\longrightarrow \Gamma_{\le k}$$ that takes $S\boxtimes D^m$ to $S$. For $m>1$ this functor can be identified with $\pi_0$ - the path components functor. For $m=1$ this is not quite right, because the anti-ball has two connected components. By slight abuse of notation, we will denote this functor by $\pi_0$ in all cases. We want to show that the homotopy left Kan extension of $\sEmb(-, \R^m)$ along the functor $\pi_0\colon\widetilde\Gamma_{\le k}(\balls_m)\longrightarrow \Gamma_{\le k}$ is equivalent to the functor $\map_*(-, S^m)$.  Again, let us consider first the homotopy left Kan extension of the functor $\sEmb(-, U)$, where $U$ is an object of $\widetilde\Gamma_{\le k}({\balls_m})$. As before, this is a free functor, and so its derived left Kan extension is the functor $ \map_*(-, \pi_0(U))$. Here we can not say, as we did in the first part of the proof, that $\map_*(- , \pi_0(U))\simeq \map_*(-, U)$, because all the components of $U$ are not contractible. More specifically, the antiball is not contractible. Let us view $S^m$ as the one-point compactification of $\R^m$. For an object $U$ of $\widetilde\Gamma(\balls_m)$, let $\overline{U}$ be the space obtained from $U$ by adding the point at $\infty$ to the antiball, and consider the added point of $\overline{U}$ as the basepoint. All the components of $\overline{U}$ are contractible, and therefore $\map_*(-, \pi_0(U))\simeq \map_*(-, \overline{U})$. We obtained an equivalence
$$\widetilde\ind_{\le k}(\sEmb(-, U))\simeq \map_*(-, \overline{U}).$$
It is easy to check that the equivalence is natural in $U$. Now we can argue, exactly as in the first part of the proof, that there is a natural equivalence
$$\underset{U'}{\hocolim}\sEmb(-, U')\longrightarrow \sEmb(-, \R^m)$$ where $U'$ ranges over subspaces of $\R^m$ that are the union of at most $k$ balls and one centrally embedded antiball. The proof is similar to one given at the end of proof of Proposition~\ref{prop: explicit Kan}. There is a similar equivalence
$$\underset{U'}{\hocolim}\map_*(-, \overline{U'}) \longrightarrow \map_*( -, \overline{\R^m})=\map_*(-, S^m)$$
where $-$ is a pointed finite set with at most $k$ non-basepoint elements. Therefore we have equivalences
$$\widetilde\ind_{\le k}(\sEmb(-, \R^m)) \simeq\underset{U'}{\hocolim}  \widetilde\ind_{\le k}(\sEmb(-, U'))\simeq \underset{U'}{\hocolim}  \map_*(-, \overline{U'})\simeq \map_*(-, S^m).$$ This is what we wanted to prove.
\end{proof}

We have the following consequence of Corollary~\ref{cor: left induction} and Proposition~\ref{prop: explicit Kan}
\begin{proposition}\label{prop: getting there}
Under the same assumptions as in Corollary~\ref{cor: left induction} there are weak equivalences of rational chain complexes
$$
\ET_k\chains^\Q(\Ebar(M, \R^n)) \simeq \underset{\Com}{\hRmod_{\le k}}\left(\chains(M^-), \HH(\balls_n;\Q)\right)$$
and
$$
\ET_k \chains^\Q(\Ebarc(\R^m, \R^n)) \simeq \underset{\Com}{\hWBimod_{\le k}}\left(\chains((S^m)^-), \HH(\balls_n;\Q)\right).
$$
\end{proposition}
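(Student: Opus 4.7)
The plan is to observe that Proposition~\ref{prop: getting there} is a direct consequence of the two preceding results: Corollary~\ref{cor: left induction} identifies each Taylor approximation with a derived mapping complex out of an induced module (respectively weak bimodule), while Proposition~\ref{prop: explicit Kan} computes those induced objects explicitly. So the proof amounts to assembling these two facts and invoking invariance of derived mapping complexes under weak equivalences in the source.

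More precisely, I would first recall from Corollary~\ref{cor: left induction} the equivalences
\[
\ET_k\chains^\Q(\Ebar(M, \R^n)) \simeq \underset{\Com}{\hRmod_{\le k}}\left(\ind\!\left(\chains({}^\delta\!\sEmb(-, M))\right),\, \HH(\balls_n;\Q)\right)
\]
and
\[
\ET_k \chains^\Q(\Ebar_c(\R^m, \R^n)) \simeq \underset{\Com}{\hWBimod_{\le k}}\!\left(\widetilde \ind\!\left(\chains({}^\delta\!\sEmb(-, \R^m))\right),\, \HH(\balls_n;\Q)\right).
\]
Then I would apply Proposition~\ref{prop: explicit Kan}, which gives a weak equivalence $\ind(\chains({}^\delta\!\sEmb(-, M))) \simeq \chains(M^\bullet)$ in the model category of right $\Com$-modules with values in rational chain complexes, and likewise a weak equivalence $\widetilde\ind(\chains({}^\delta\!\sEmb(-, \R^m))) \simeq \chains((S^m)^\bullet)$ in the model category of weak $\Com$-bimodules.

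The final step is to transport these source-side equivalences through the derived hom. By construction $\underset{\Com}{\hRmod_{\le k}}(-,-)$ and $\underset{\Com}{\hWBimod_{\le k}}(-,-)$ are the homotopically correct mapping complexes (defined, for instance, by cofibrant replacement in the first argument and fibrant replacement in the second), so weak equivalences between cofibrant replacements of the two sources induce weak equivalences of mapping complexes into any fixed target; the restriction to modules of arity $\le k$ does not affect this, since truncation is compatible with the diagram-category model structure via Proposition~\ref{prop: weak bimodule} and Lemma~\ref{lemma: right module}. Combining, we obtain the two asserted equivalences of rational chain complexes.

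There is no real obstacle: all the substantive work has already been done in Corollary~\ref{cor: left induction} (identifying the Taylor tower as derived module maps, using formality via Proposition~\ref{prop: first reduction}) and in Proposition~\ref{prop: explicit Kan} (computing the left Kan extensions from $\calF({}^\delta\balls_m)$ and $\widetilde\Gamma({}^\delta\balls_m)$ to $\calF$ and $\widetilde\Gamma\simeq\Gamma$ respectively). The only thing to be careful about is bookkeeping of the model structures: one needs the derived hom in the category of right $\Com$-modules (resp. weak $\Com$-bimodules) with values in rational chain complexes to be homotopy invariant in the source, which follows from the equivalence of these categories with functor categories on $\calF$ and $\Gamma$ established in Corollaries~\ref{cor: right com modules are F modules} and~\ref{cor: weak com bimods are gamma mods}, where the standard projective model structure applies.
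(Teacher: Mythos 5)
Your proposal is correct and matches the paper's own (very brief) argument: the paper states Proposition~\ref{prop: getting there} as a direct consequence of Corollary~\ref{cor: left induction} and Proposition~\ref{prop: explicit Kan}, and you assemble exactly those two results, substituting the explicit identifications of the induced (weak bi-)modules into the derived mapping complexes and invoking invariance of derived hom under weak equivalence in the source. The model-category bookkeeping you make explicit (homotopy invariance via cofibrant replacement in the source, compatibility with truncation to arity $\le k$, and the identification of (weak bi-)module categories with diagram categories carrying projective model structures) is precisely the justification the paper leaves implicit.
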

The first part of the proposition may be viewed as an alternative formulation of the main result of~\cite{ALV}.
As to the second part of the proposition, it can be simplified further, using Pirashvili's ``Dold-Kan'' correspondence between right $\Gamma$-modules and right $\Epi$-modules~\cite{PirashviliDold}. Recall that infinitesimal $\Com$-bimodules are the same as right $\Gamma$-modules (Corollary~\ref{cor: weak com bimods are gamma mods}). By~\cite[Theorem 3.1]{PirashviliDold}, the category of right $\Gamma$-modules (with values in an abelian category) is equivalent to the category of right $\Epi$-modules, where $\Epi$ is the category of (unpointed) finite sets and surjective maps between them. Given a right $\Gamma$-module $F$, the corresponding $\Epi$-module is defined using the cross-effect, so it may be denoted $\ce F$. For an unpointed finite set $i$, $\ce F (i)$ is defined to be the cokernel of the natural homomorphism
$$\bigoplus_{j=1}^i F((i-1)_+) \longrightarrow F(i_+).$$
Here the $i$ maps $F((i-1)_+)\longrightarrow F(i_+)$ are induced by the $i$ maps $i_+\longrightarrow (i-1)_+$ where the $j$-th map sends $j$ to the basepoint and is otherwise an order preserving isomorphism. Equivalently, $\ce F(i)$ can be defined as the total cokernel of a certain evident cubical diagram of objects of the form $F(j_+)$, where $j$ ranges over subsets of $i$. In the case when $i$ is empty, $\ce F(\emptyset)=F(*)$. It is easy to see that the $\Epi$-module $\ce\chains((S^m)^-)$ is equivalent to the $\Epi$-module $\widetilde\chains(S^{m-})$ which associates to a set $i$ the complex $\widetilde\chains(S^{mi})=\chains(S^{mi}, *)$ and where the $\Epi$-module structure is defined by the diagonal maps associated with surjective maps of sets. More generally, the $\Epi$-module $\ce\chains(X^-)$ is equivalent to $\widetilde\chains(X^{\wedge -})$.
\begin{definition}\label{def: hatH}
Define the $\Epi$-module $\hat\HH(\balls_n ; \Q)$ to be the cross-effect $\ce\HH(\balls_n; \Q)$. 
\end{definition}
Thus, the second assertion of Proposition~\ref{prop: getting there} translates, via Pirashvili's correspondence, into an equivalence
\begin{equation}\label{eq: pirashvili reduction}
\ET_k \chains^\Q(\Ebarc(\R^m, \R^n)) \simeq \underset{\Epi}{\hRmod_{\le k}}\left(\widetilde\chains(S^{m-}), \hat\HH(\balls_n;\Q)\right).
\end{equation}
In fact, things can be simplified even further, as shown in the following lemma
\begin{lemma}\label{lemma: easy formality}
The right $\Epi$-module $\widetilde\chains^\Q(S^{m-})$ is formal. That is, there is a weak equivalence of modules
$$\widetilde\chains^\Q(S^{m-})\simeq \widetilde\HH(S^{m-};\Q).$$
\end{lemma}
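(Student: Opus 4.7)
The plan is to reduce the formality of the right $\Epi$-module $\widetilde{\chains}^\Q(S^{m\bullet})$ to the classical rational formality of $S^m$, combined with an elementary dimension-counting observation: the reduced diagonal $\widetilde{\chains}^\Q(S^m)\to \widetilde{\chains}^\Q(S^m)^{\otimes 2}$ necessarily induces the zero map on rational homology, since the source lives in degree $m$ while the target lives in degree $2m$. This is exactly the same mechanism that makes every non-bijective surjection act by zero on $\widetilde{\HH}^\Q(S^{m\bullet})$.

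First, I would use Pirashvili's equivalence (which is exact and preserves weak equivalences) to reformulate the problem: it suffices to prove that the weak $\Com$-bimodule $\chains^\Q((S^m)^\bullet)\colon A\mapsto \chains^\Q((S^m)^{A_-})$ is quasi-isomorphic to its homology $\HH^\Q((S^m)^\bullet)$ as a right $\Gamma$-module. The iterated Eilenberg--Zilber shuffle map provides a natural quasi-isomorphism
\[
\chains^\Q(S^m)^{\otimes A_-} \xrightarrow{\simeq} \chains^\Q((S^m)^{A_-}),
\]
which upgrades to a quasi-isomorphism of right $\Gamma$-modules once the source is endowed with the $\Gamma$-module structure determined by the cocommutative coproduct $\delta\colon \chains^\Q(S^m)\to \chains^\Q(S^m)^{\otimes 2}$ (from the diagonal of $S^m$) together with the basepoint augmentation $\chains^\Q(S^m)\to \Q$. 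This reduces the statement to the formality of $\chains^\Q(S^m)$ as a coaugmented cocommutative coalgebra over $\Q$.

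This last fact is the rational formality of $S^m$, classical but in this setting almost immediate: since the reduced part $\widetilde{\chains}^\Q(S^m)$ has homology concentrated in degree $m$ and the reduced coproduct takes values in $\widetilde{\chains}^\Q(S^m)^{\otimes 2}$ whose homology is concentrated in degree $2m$, the induced coproduct on homology is forced to vanish. A standard argument (minimal model, or arity-by-arity construction of a zig-zag) produces a chain of coalgebra quasi-isomorphisms connecting $\chains^\Q(S^m)$ to $\HH^\Q(S^m)=\Q\oplus \Q[m]$ with the primitive coproduct $\Delta(\sigma)=\sigma\otimes 1+1\otimes \sigma$. Propagating this equivalence through the tensor powers and through the $\Gamma$-module structure, and then applying the cross-effect functor $\ce$, yields the claim. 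The observation that $\ce \HH^\Q((S^m)^\bullet)=\widetilde{\HH}^\Q(S^{m\bullet})$ (the diagonal part and the augmentation combine so that only the top stratum survives) completes the identification.

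The main obstacle is a strictness issue: the Alexander--Whitney/shuffle coproduct on singular chains is only cocommutative up to coherent $E_\infty$-homotopy, so the Eilenberg--Zilber equivalences above do not automatically form a diagram of right $\Gamma$-modules. The cleanest way around this is to model chains by a strictly cocommutative dg coalgebra (e.g.\ the dual of Sullivan's polynomial de Rham algebra, or rational simplicial chains of a simplicial model of $S^m$) before forming tensor powers. Alternatively, one may work directly in the projective model category of right $\Epi$-modules, take a cofibrant replacement of $\widetilde{\HH}^\Q(S^{m\bullet})$, and construct the comparison map by induction on arity, using the vanishing of the relevant homology groups (in degrees below the top) to lift successively across the boundaries produced by strict surjections.
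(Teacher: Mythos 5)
Your proposal takes a genuinely different route from the paper's proof, and you have correctly located the difficulty in it.

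The paper does \emph{not} go through coalgebra formality of $S^m$. Instead it argues directly in the category of right $\Epi$-modules: filter by cardinality, consider the extension
$$0 \to M^k \to M_{\le k} \to M_{\le k-1}\to 0$$
for $M=\widetilde\chains^\Q(S^{m\bullet})$, identify the classifying obstruction as living in $\HH_0$ of
$\hom_{\Sigma_k}\bigl(\widetilde\chains^\Q(\Delta^kS^m),\widetilde\chains^\Q(S^{mk})\bigr)$ (using Corollary~\ref{cor: elementary calculation}, i.e.\ the Koszul-dual description of morphism complexes), and show that this group vanishes via Alexander duality and the connectivity of $\Sigma\Config{k}{\R^m}$. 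The outcome is an inductive splitting $\widetilde\chains^\Q(S^{m\bullet})_{\le k}\simeq\bigoplus_{i\le k}\widetilde\HH(S^{m\bullet};\Q)^i$, and passing to the limit gives the result. Your ``second fix'' (cofibrant replacement and induction on arity, using vanishing of obstruction groups) is morally this same argument, but as stated it is too vague to count as a proof: the content of the paper's argument is the precise identification of the obstruction group in arity $k$, and that identification requires the Koszul-dual machinery developed in Section~\ref{section: Koszul spectral sequence}.

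Your main route, via iterated Eilenberg--Zilber and coalgebra formality of $\chains^\Q(S^m)$, is a legitimate alternative idea, and you correctly flag the fatal issue with the naive version: the Alexander--Whitney coproduct on singular (or simplicial) chains is not strictly cocommutative, so the comparison map $\chains^\Q(S^m)^{\otimes A_-}\to\chains^\Q\bigl((S^m)^{A_-}\bigr)$ is not literally a map of right $\Gamma$-modules when the source is given the coproduct-induced structure. Your ``first fix'' is the right repair in principle, but two things need to be said more carefully. First, ``rational simplicial chains'' does not help -- simplicial chains have the same noncocommutativity problem as singular chains; what one actually needs is a model for which chains are strictly symmetric monoidal, and over $\Q$ that means passing through Sullivan's $A_{\mathrm{PL}}$ (or an equivalent cdga-based functor) and dualizing. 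Second, once one does pass to $A_{\mathrm{PL}}$, one must check that the Künneth quasi-isomorphism
$A_{\mathrm{PL}}(S^m)^{\otimes A_-}\to A_{\mathrm{PL}}\bigl((S^m)^{A_-}\bigr)$
is natural with respect to \emph{all} $\Gamma$-morphisms (diagonals and basepoint collapses), not just isomorphisms; this is true, but it is a genuine verification, not a triviality. Finally, the degree-counting argument you give establishes that the reduced coproduct of $\chains^\Q(S^m)$ vanishes on homology, but coalgebra formality of $S^m$ is the stronger statement that there is a zig-zag of quasi-isomorphisms of cocommutative dg coalgebras from your chosen model to $\HH^\Q(S^m)$; this is classical (spheres are formal) but should be cited or proved, not deduced directly from the degree argument.

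In summary: your conceptual route is attractive (it isolates the elementary source of formality -- the degree gap between $m$ and $2m$) and, if the Sullivan-model details were written out, it would give a correct proof. The paper's proof is more self-contained and stays entirely inside the category of $\Epi$-modules; the price it pays is needing the Koszul-dual/obstruction machinery, but it has to develop that machinery anyway for the main theorem, so the marginal cost is essentially zero.
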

\begin{proof}
Let us assume that $m\ge 1$ (the case $m=0$ is trivial, but the forthcoming proof does not apply to it). Let us consider truncated modules first. Let $M$ be a right $\Epi$-module with values in chain complexes. That is, $M$ is a contravariant functor from $\Epi$ to chain complexes. Fix $k\ge 0$. Let $M_{\le k}$ be the module that agrees with $M$ on sets smaller or equal to $k$, and is $0$ on sets bigger than $k$. Clearly, this is well-defined, and there is a canonical surjective map of right $\Epi$-modules $M_{\le k}\to M_{\le k-1}$. Let $M^k$ be the kernel of the homomorphism. Clearly, $M^k$ is the right module that agrees with $M$ on the set with $k$ elements, and is zero elsewhere. There is a natural cofibration sequence
$$0\longrightarrow M^k \longrightarrow M_{\le k} \longrightarrow M_{\le k-1} \longrightarrow 0.$$
This sequence  can be viewed as a homotopy cofibration sequence of right $\Epi$-modules. It is classified by a map (in the homotopy category of right modules)
$$M_{\le k-1}\longrightarrow \Sigma M^k$$
where $\Sigma M^k$ is the point-wise shift of $M^k$.

Now let us apply this to the module $M=\widetilde\chains^\Q(S^{m-})$. We have the cofibration sequence
$$\widetilde\chains^\Q(S^{m-})^k \longrightarrow \widetilde\chains^\Q(S^{m-})_{\le k} \longrightarrow \widetilde\chains^\Q(S^{m-})_{\le k-1}\longrightarrow \Sigma \widetilde\chains^\Q(S^{m-})^k.$$
Let us calculate the derived mapping object, in the category of right modules, from
$\widetilde\chains^\Q(S^{m-})_{\le k-1}$ to $ \Sigma \widetilde\chains^\Q(S^{m-})^k.$ This is a chain complex whose $\HH_0$ gives the set of homotopy classes of maps between these objects. Recall that $ \Sigma \widetilde\chains^\Q(S^{m-})^k$ is a right $\Epi$-module concentrated in place $k$. In Section~\ref{section: Koszul spectral sequence} we will prove some general results about morphisms between right $\Epi$-modules. In particular, we will prove Corollary~\ref{cor: elementary calculation}, which implies that
$$\underset{\Omega}{\hRmod}\left(\widetilde\chains^\Q(S^{m-})_{\le k-1},  \Sigma \widetilde\chains^\Q(S^{m-})^k\right)\simeq \hom_{\Sigma_k}\left(\widetilde\chains^\Q(\Delta^kS^m),\widetilde\chains^\Q (S^{mk})\right).$$
Here $\hom_{\Sigma_k}$ stands for derived homomorphism in the category of chain complexes with an action of $\Sigma_k$ and $\Delta^kS^m$ is the fat diagonal in $S^{mk}$. By Alexander duality, this is quasi-isomorphic to $\widetilde\chains^\Q (\Sigma \Config{k}{\R^m} )^{\Sigma_k}$ (here we have used that over the rationals, invariants are equivalent to derived invariants), which is an $m-1$-connected, and therefore zero-connected complex. It follows that there are no homotopically non-trivial maps of modules from $\widetilde\chains^\Q(S^{m-})_{\le k-1}$ to $ \Sigma \widetilde\chains^\Q(S^{m-})^k$, and therefore there is a weak equivalence of right $\Epi$-modules
$$\widetilde\chains^\Q(S^{m-})_{\le k}  \simeq \widetilde\chains^\Q(S^{m-})_{\le k-1} \oplus \widetilde\chains^\Q(S^{m-})^{k} .$$
The right module $\widetilde\chains^\Q(S^{m-})^{k}$ is essentially determined by the chain complex $\widetilde\chains^\Q(S^{mk})$, together with an action of $\Sigma_k$. It is clear that there is a $\Sigma_k$-equivariant equivalence of $\Sigma_k$ complexes $\widetilde\chains^\Q(S^{mk})\simeq \widetilde\HH(S^{mk}; \Q)$ (this is in fact true integrally). Thus there is an equivalence of right $\Epi$-modules
$$\widetilde\chains^\Q(S^{m-})_{\le k}  \simeq \widetilde\chains^\Q(S^{m-})_{\le k-1} \oplus \widetilde\HH(S^{m-}; \Q)^{k},$$ and by induction we get an equivalence
$$\widetilde\chains^\Q(S^{m-})_{\le k}  \simeq \bigoplus_{i=1}^k \widetilde\HH(S^{m-}; \Q)^{i} .$$
It is easy to see that the right hand side is isomorphic to $\widetilde\HH(S^{m-}; \Q)_{\le k}$ and so we have an equivalence $$\widetilde\chains^\Q(S^{m-})_{\le k}  \simeq \widetilde\HH(S^{m-}; \Q)_{\le k}$$ for each $k$, with the restriction map $\widetilde\chains^\Q(S^{m-})_{\le k} \longrightarrow \widetilde\chains^\Q(S^{m-})_{\le k-1} $ corresponding to the restriction map  $\HH(S^{m-};\Q)_{\le k} \longrightarrow \widetilde\HH(S^{m-};\Q)_{\le k-1} $. Finally, $\widetilde\chains^\Q(S^{m-})$ may be identified with the inverse limit (which is also the homotopy inverse limit) of $\widetilde\chains^\Q(S^{m-})_{\le k}$, and similarly for the homological version, which implies that  $$\widetilde\chains^\Q(S^{m-})  \simeq \widetilde\HH(S^{m-}; \Q)$$
\end{proof}
We are finally ready to state and prove our main theorem.
\begin{theorem} \label{theorem: main theorem}
Suppose that $2m+1\le n$. Then there are weak equivalences for all $k$
$$
\ET_k \chains^\Q(\Ebarc(\R^m, \R^n)) \simeq \underset{\Epi}{\hRmod_{\le k}}\left(\widetilde\HH(S^{m-}), \hat\HH(\balls_n;\Q)\right).
$$
This includes the case $k=\infty$:
$$
\ET_\infty \chains^\Q(\Ebarc(\R^m, \R^n)) \simeq \underset{\Epi}{\hRmod}\left(\widetilde\HH(S^{m-}), \hat\HH(\balls_n;\Q)\right).
$$
It follows that if $2m+1<n$ then there is an equivalence of chain complexes
$$\chains^\Q(\Ebarmn)\simeq \underset{\Epi}{\hRmod}\left(\widetilde\HH(S^{m-}), \hat\HH(\balls_n;\Q)\right).
$$
\end{theorem}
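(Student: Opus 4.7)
The plan is to combine the earlier reductions in the paper with Pirashvili's Dold--Kan correspondence and the formality lemma. Starting from Proposition~\ref{prop: getting there}, we already have, under the assumption $2m+1\le n$, a weak equivalence
\[
\ET_k \chains^\Q(\Ebar_c(\R^m, \R^n)) \simeq \underset{\Com}{\hWBimod_{\le k}}\left(\chains((S^m)^\bullet), \HH(\balls_n(\bullet);\Q)\right).
\]
By Corollary~\ref{cor: weak com bimods are gamma mods}, truncated weak $\Com$-bimodules with values in $\Q$-chain complexes are the same as truncated contravariant functors from $\Gamma$ (pointed finite sets of cardinality $\le k+1$) to chain complexes, so the right-hand side is a derived $\hom$ in the category of right $\Gamma$-modules.

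Next, I would invoke Pirashvili's ``Dold--Kan'' theorem~\cite{PirashviliDold}, which gives an equivalence between the abelian category of right $\Gamma$-modules and the abelian category of right $\Epi$-modules, realised explicitly by the cross-effect functor $\ce$. This is an equivalence of abelian categories, so it extends to an equivalence of the corresponding categories of chain-complex-valued diagrams and preserves projective resolutions; in particular it identifies the derived $\hom$ objects. Under this identification one checks directly on the definitions that $\ce\,\chains((S^m)^\bullet)$ is $\widetilde\chains(S^{m\bullet})$ (the diagonal $\Epi$-module on reduced chains of spheres) and that $\ce\,\HH(\balls_n(\bullet);\Q)=\hat\HH(\balls_n(\bullet);\Q)$ by Definition~\ref{def: hatH}. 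This yields equivalence~\eqref{eq: pirashvili reduction},
\[
\ET_k \chains^\Q(\Ebar_c(\R^m, \R^n)) \simeq \underset{\Epi}{\hRmod_{\le k}}\left(\widetilde\chains(S^{m\bullet}), \hat\HH(\balls_n(\bullet);\Q)\right).
\]
Then Lemma~\ref{lemma: easy formality} lets us replace $\widetilde\chains^\Q(S^{m\bullet})$ by $\widetilde\HH(S^{m\bullet};\Q)$, producing the stated formula for each $k$ including $k=\infty$ (the inverse limit of the truncated derived $\hom$ objects is the untruncated one).

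Finally, to pass from the $k=\infty$ statement to $\chains^\Q(\Ebarmn)$ itself under the stronger hypothesis $2m+1<n$, I would invoke the convergence of the embedding Taylor tower: by the Goodwillie--Weiss--Klein connectivity estimates the canonical map $\Ebar_c(\R^m,\R^n)\to\ET_\infty\Ebar_c(\R^m,\R^n)$ is a weak equivalence provided $n-m\ge 3$, which is guaranteed by $n>2m+1$ (since $m\ge 1$). Applying $\chains^\Q$ and composing with the equivalence already obtained for $\ET_\infty$ completes the proof.

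The main technical obstacle is verifying that Pirashvili's cross-effect equivalence passes cleanly to the derived level for the specific modules at hand, i.e.\ that it is compatible with the model structures so that $\underset{\Gamma}{\hRmod}$ and $\underset{\Epi}{\hRmod}$ agree after applying $\ce$. One way to handle this is to observe that the cross-effect is exact (it is the cokernel of a split-injective map of functors on a suitably pointed source), so it preserves quasi-isomorphisms and commutes with cofibrant replacement up to equivalence; then the identification of derived $\hom$ objects follows from the fact that $\ce$ is an equivalence of abelian categories. The secondary technical point is the limit-of-truncations statement, which follows from the fact that $\widetilde\HH(S^{m\bullet})$ is (up to shifts) concentrated in a single component in each cardinality, so the Koszul-type filtration analyzed later in the paper makes the inverse system $\{\hRmod_{\le k}\}$ eventually constant in each bidegree.
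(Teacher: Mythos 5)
Your proof follows essentially the same route as the paper: start from Proposition~\ref{prop: getting there}, translate weak $\Com$-bimodule maps into right $\Epi$-module maps via Pirashvili's Dold--Kan theorem (yielding equation~\eqref{eq: pirashvili reduction}), apply the formality Lemma~\ref{lemma: easy formality} to replace $\widetilde\chains^\Q(S^{m\bullet})$ with its homology, and invoke Taylor tower convergence for the final statement when $n>2m+1$. The only cosmetic differences are that the paper cites~\cite{WeissHomol} for analyticity of the underlying homology functor while you invoke the Goodwillie--Weiss--Klein connectivity estimates (equivalent for the purpose at hand), and your spelled-out justification that the cross-effect equivalence descends to derived $\hom$ objects makes explicit a step the paper keeps implicit.
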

\begin{proof}
Our starting point is~\eqref{eq: pirashvili reduction}, which says that
$$\ET_k \chains^\Q(\Ebarc(\R^m, \R^n)) \simeq\underset{\Epi}{\hRmod_{\le k}}\left(\widetilde\chains(S^{m-}), \hat\HH(\balls_n;\Q)\right).$$
Since the $\Epi$-module $\hat\HH(\balls_n;\Q)$ takes values in rational chain complexes, there is an equivalence
$$\underset{\Epi}{\hRmod_{\le k}}\left(\widetilde\chains^\Q(S^{m-}),
\hat\HH(\balls_n;\Q)\right)\simeq \underset{\Epi}{\hRmod_{\le k}}\left(\widetilde\chains(S^{m-}),
\hat\HH(\balls_n;\Q)\right).$$
By Lemma~\ref{lemma: easy formality}, $\widetilde\chains^\Q(S^{m-})$ can be replaced with $\widetilde\HH(S^{m-};\Q)$. Then again, since the target of the mapping space consists of rational chain complexes, we may replace the rational homology groups $\widetilde\HH(S^{m-};\Q)$ with the corresponding integral homology. For the last statement we require $2m+1<n$ to ensure that the underlying functor is analytic, see~\cite{WeissHomol}.
\end{proof}

\section{The splitting by homological degree}\label{section: splitting}
Our goal in the rest of the paper is to analyze the consequences of Theorem~\ref{theorem: main theorem}, and to use it to write explicit chain complexes for computing the rational homology  groups of $\Ebarc(\R^m, \R^n)$. Theorem~\ref{theorem: main theorem} expresses $\chains^\Q(\Ebarc(\R^m, \R^n))$ as a space of morphisms between right $\Epi$-modules $\widetilde\HH(S^{m-})$ and $\hat\HH(\balls_n;\Q)$. Obviously, these $\Epi$-modules split as direct sums of $\Epi$-modules concentrated in a single homological degree. For example, there is an isomorphism {\em of $\Epi$-modules}
$$\widetilde\HH(S^{m-})\cong \bigoplus_{i=0}^\infty \widetilde\HH_i(S^{m-}).$$
 We remind the reader that by $\widetilde \HH_i(X)$ we really mean the chain complex that has the $i$-th reduced homology of $X$ in degree $i$ and is zero otherwise. In fact, this decomposition is unnecessarily wasteful, because one need not consider all values of $i$, but only multiples of $m$.
 \begin{definition}
 We define, for each $m\ge 1$ and for each $s\ge 0$ the right $\Epi$-module $Q^m_s$ by the formula $Q^m_s(k)=\widetilde\HH_{ms}(S^{mk})$.
 \end{definition}
 Clearly, $Q^m_s$ is given by the following formula
$$Q^m_s(k)=\left\{\begin{array}{cc} 0 & k\ne s \\ \Z[ms] & k =s  \end{array}\right.$$
where $\Z[ms]=\widetilde\HH_{ms}(S^{ms})$ is the chain complex that has $\Z$ in dimension $ms$ and is zero otherwise. Note that $\Sigma_s$ acts trivially on $\Z[ms]$ if $m$ is even and acts by sign representation if $m$ is odd. $Q^m_s$ is in some sense a minimal non-zero right $\Epi$-module concentrated in degree $s$. It is obvious that there is an isomorphism of right $\Epi$-modules (assuming that $m\ge 1$)
$$\widetilde\HH(S^{m-})\cong\bigoplus_{s=0}^\infty Q^m_s.$$
There is a similar splitting for the right module $\hat\HH(\balls_n; \Q)$.
\begin{definition}
For each $n\ge 2$ and $t\ge 0$ define the right $\Epi$-module $\hat\HH^n_t$ by the formula
$$\hat\HH^n_t  = \hat\HH_{(n-1)t}(\balls_n; \Q).$$
\end{definition}
It is well known that $\HH(\balls_n(k))$ is concentrated in dimensions of the form $(n-1)t$. It follows that there is an isomorphism of $\Epi$-modules (assuming $n\ge 2$)

$$\hat\HH(\balls_n; \Q)\cong \bigoplus_{t=0}^\infty \hat\HH^n_t .$$
It is obvious that each one of the direct sums above is isomorphic to a direct product.
It follows that there is an isomorphism
$$\underset{\Epi}\hRmod\left(\widetilde\HH(S^{m-}), \hat\HH(\balls_n; \Q)\right)\cong\prod_{s,t=0}^\infty\underset{\Epi}\hRmod\left(Q^m_s,\hat\HH^n_t \right).$$
Thus we have the following immediate consequence of Theorem~\ref{theorem: main theorem}
\begin{corollary} \label{corollary: of main theorem}
Suppose that $2m+1\le n$. Then there are weak equivalences for all $k\le \infty$
$$
\ET_k \chains^\Q(\Ebarc(\R^m, \R^n)) \simeq \prod_{s,t=0}^\infty\underset{\Epi}\hRmod{}_{\le k}\left(Q^m_s,\hat\HH^n_t \right).
$$
It follows that
$$
\ET_\infty \chains^\Q(\Ebarc(\R^m, \R^n)) \simeq \prod_{s,t=0}^\infty\underset{\Epi}{\hRmod} \left(Q^m_s,\hat\HH^n_t \right).
$$
If $2m+1<n$ then the Taylor tower converges, and so we have an equivalence
$$
\chains^\Q(\Ebarc(\R^m, \R^n)) \simeq \prod_{s,t=0}^\infty\underset{\Epi}{\hRmod} \left(Q^m_s,\hat\HH^n_t \right).
$$
\end{corollary}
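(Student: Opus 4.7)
The plan is to start from the equivalence provided by Theorem~\ref{theorem: main theorem} and push the two direct sum decompositions of the source and target $\Epi$-modules through the derived hom. So the first step will be to justify that
$$\widetilde\HH(S^{m\bullet})\cong\bigoplus_{s\ge 0} Q^m_s(\bullet)\quad\text{and}\quad\hat\HH(\balls_n(\bullet);\Q)\cong\bigoplus_{t\ge 0}\hat\HH^n_t(\bullet)$$
hold as isomorphisms of right $\Epi$-modules, not merely as graded abelian groups at each object. For this one uses the two concentration facts recalled in the introduction: $\widetilde\HH_i(S^{mk})$ is supported in degree $i=mk$, and $\HH_j(\balls_n(k))$ is supported in degrees $j=(n-1)t$; the $\Epi$-structure maps (diagonals and insertions/contractions) preserve these gradings, so the decompositions are respected. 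In particular, each $Q^m_s$ is nonzero only on the single cardinality $\bullet=s$ and each $\hat\HH^n_t$ inherits the internal homological grading from $\HH(\balls_n(\bullet);\Q)$.

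Next I would invoke the formal fact that the derived mapping complex $\underset{\Epi}{\hRmod}(-,-)$ sends coproducts in the first variable and products in the second variable both to products of chain complexes: this is a routine consequence of $\hRmod$ being a right Quillen bifunctor on the cofibrantly generated model category of right $\Epi$-modules, so it respects these limits in an appropriate way. Since each $Q^m_s$ is supported at a single cardinality, the coproduct $\bigoplus_s Q^m_s$ coincides with $\prod_s Q^m_s$ object by object, and similarly for $\bigoplus_t\hat\HH^n_t$. Applying this to the right-hand side of Theorem~\ref{theorem: main theorem} gives the desired product decomposition
$$\underset{\Epi}{\hRmod}_{\le k}\!\left(\widetilde\HH(S^{m\bullet}),\hat\HH(\balls_n(\bullet);\Q)\right)\simeq\prod_{s,t\ge 0}\underset{\Epi}{\hRmod}_{\le k}\!\left(Q^m_s(\bullet),\hat\HH^n_t(\bullet)\right),$$
valid uniformly in $k\le\infty$ by taking the $\le k$ truncation inside each factor.

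Finally, for the last claim of the corollary, I would invoke Weiss's convergence theorem for the embedding tower (the reference to~\cite{WeissHomol} in the proof of Theorem~\ref{theorem: main theorem}): when $2m+1<n$ the functor $U\mapsto\Ebar_c(U,\R^n)$ is analytic, so $\ET_\infty\chains^\Q(\Ebar_c(\R^m,\R^n))\simeq\chains^\Q(\Ebar_c(\R^m,\R^n))$, and the product formula already established for $k=\infty$ yields the stated equivalence.

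The only step that requires genuine care — and which I see as the main potential obstacle — is the compatibility of $\hRmod$ with the infinite direct sum in the source. Because we are working up to homotopy, one must verify that $\bigoplus_s Q^m_s$ admits a cofibrant replacement that is itself a coproduct of cofibrant replacements of the summands, so that the strict hom out of this replacement into a fibrant replacement of the target splits as a product; equivalently, one wants the coproduct of cofibrant replacements to remain cofibrant in the chosen model structure on right $\Epi$-modules. This is standard in reasonable (cofibrantly generated, proper) model structures, but it is the one place where the argument is not purely formal.
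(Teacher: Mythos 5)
Your proposal is correct and follows the same approach as the paper. The paper treats this corollary as an "immediate consequence" of Theorem~\ref{theorem: main theorem} after recording, in Section~\ref{section: splitting}, exactly the two decompositions you write down, noting that each direct sum is also a direct product (because $Q^m_s$ is concentrated at cardinality $\bullet=s$ and $\hat\HH^n_t$ is concentrated in a single homological degree), and then passing the decompositions through $\hRmod$; your write-up simply makes explicit the standard model-categorical point that the coproduct of cofibrant replacements is a cofibrant replacement of the coproduct, which the paper leaves implicit.
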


We will see in Section~\ref{section: Koszul complex} (Corollary~\ref{corollary: final homological formula}), that if $n>2m+1$ the product above can be replaced by a direct sum. 

\section{The Koszul spectral sequence for $\Epi$-modules}\label{section: Koszul spectral sequence}
Let $F, G$ be right $\Epi$-modules with values in chain complexes. Our next task is to analyze the complex of derived morphisms $$\underset{\Epi}{\hRmod}(F, G),$$ in order to understand better the consequences of Theorem~\ref{theorem: main theorem} and Corollary~\ref{corollary: of main theorem}. In this section we will review a spectral sequence for calculating the homology of $\hRmod_\Epi(F, G)$. The spectral sequence arises from filtering the category $\Epi$ by cardinality. The filtration has appeared in the literature in various guises, for example~\cite{Fresse}. We call it the Koszul spectral sequence, because the Koszul dual of $F$ makes an appearance. In the next section we will apply the general theory to the mapping complex $\hRmod(Q^m_s,\hat\HH^n_t)$. We will see that in this case the $E^1$ term of the Koszul spectral sequence is in fact a single chain complex, which is therefore quasi-isomorphic to $\hRmod(Q^m_s,\hat\HH^n_t)$, and is giving a small model for the mapping complex.

In fact, rather then analyzing the mapping complex $\underset{\Epi}{\hRmod}(F, G)$ we will focus on the dual of this complex. For this, we will assume most of the time that the right module $G$ is the dual of a left module. By a left $\Epi$-module is a {\it covariant} functor from $\Epi$ to a background category. To avoid possible confusion, we remind the reader that while a right $\Epi$-module is the same thing as a right module over the non-unital commutative operad, a left $\Epi$-module is not the same thing as a left module over the operad.

Suppose $C$ is a chain complex of $k$-modules. Let $D(C)$ be the dual chain complex $\hom_k(C, k)$. The grading of the dual complex is defined by $\hom(C,k)_n=\hom(C_{-n},k)$. In particular, if $C$ is non-negatively graded, $D(C)$ is non-positively graded. This means that we can not any longer confine ourselves to the category of non-negatively graded chain complexes. From now on, ``chain complex'' means bounded below chain complex. The model structure that we refer to is Quillen's original structure on bounded below chain complexes. Since in practice we only use chain complexes over the rationals, the model structure has a particularly simple form: fibrations (resp. cofibrations) are chain maps that are level-wise surjective (resp. injective). All objects are fibrant and cofibrant.

So let $G$ be a {\em left} $\Epi$-module with values in bounded chain complexes. Let $D(G)$ be the objectwise dual of $G$. Then $D(G)$ is a right $\Epi$-module. Let $F$ be another right $\Epi$-module. We are interested in the derived morphism object $\hRmod(F, D(G))$. Let $F \stackrel{h}{\otimes} G$ be the derived coend of the contravariant functor $F$ and the covariant functor $G$. It is well known that derived coend is a derived left adjoint to derived hom, so we have the following proposition.
\begin{proposition}\label{prop: coend and hom}
There is a natural weak equivalence
$\hRmod(F, D(G))\simeq D(F \stackrel{h}{\otimes}  G).$
\end{proposition}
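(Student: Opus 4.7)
The plan is to establish the strict (underived) version of the adjunction first, and then upgrade it to the derived level using a cofibrant replacement of $F$.

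First I would verify the strict statement: for a right $\Epi$-module $F$ and left $\Epi$-module $G$, both valued in chain complexes of $k$-modules, there is a natural isomorphism
$$\underset{\Epi}{\Rmod}(F, D(G)) \cong D(F \otimes_\Epi G),$$
where $F \otimes_\Epi G$ is the (strict) coend $\operatorname{coeq}\bigl(\bigoplus_{\alpha: n\to n'} F(n') \otimes G(n) \rightrightarrows \bigoplus_n F(n) \otimes G(n)\bigr)$. This is a formal manipulation: combine the pointwise tensor-hom adjunction $\hom(A \otimes B, k) \cong \hom(A, \hom(B, k))$ with the universal property converting a coend into an end, namely
$$\underset{\Epi}{\Rmod}(F, D(G)) = \int_{n \in \Epi} \hom(F(n), \hom(G(n), k)) \cong \int_{n \in \Epi} \hom(F(n) \otimes G(n), k) \cong D\bigl(F \otimes_\Epi G\bigr).$$

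Next I would derive both sides. Using the projective model structure on right $\Epi$-modules with values in chain complexes (weak equivalences and fibrations are objectwise), all objects are fibrant, so $D(G)$ is fibrant and one computes $\hRmod(F, D(G))$ as $\Rmod(QF, D(G))$ for a cofibrant replacement $QF \xrightarrow{\simeq} F$. On the other hand, by definition $F \stackrel{h}{\otimes} G \simeq QF \otimes_\Epi G$. Applying $D$ and invoking the strict isomorphism above to $QF$ gives
$$D(F \stackrel{h}{\otimes} G) \simeq D(QF \otimes_\Epi G) \cong \underset{\Epi}{\Rmod}(QF, D(G)) \simeq \hRmod(F, D(G)),$$
which is the desired equivalence.

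The one point that needs care — and which I would flag as the main obstacle — is that in the final chain of equivalences we implicitly use that the duality functor $D = \hom(-, k)$ preserves the relevant quasi-isomorphisms, so that applying $D$ to the quasi-isomorphism $QF \otimes_\Epi G \xrightarrow{\simeq} F \stackrel{h}{\otimes} G$ yields a quasi-isomorphism on dual complexes. This holds cleanly when $k$ is a field (the setting the paper eventually specializes to, namely $k = \Q$), since every chain complex over a field is projective so $D$ is exact; alternatively one can invoke degreewise finiteness of the complexes involved. So the substantive step is not the adjunction itself, which is formal, but rather recording the hypothesis on $k$ under which $D$ is homotopically well-behaved; once that is noted, the proof amounts to the adjunction argument above.
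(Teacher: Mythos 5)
Your proof is correct and takes essentially the same route as the paper, which simply declares the tensor--hom adjunction between derived coend and derived hom to be ``well known'' and gives no further argument; you spell out that adjunction strictly and then derive it by cofibrant replacement. You also correctly flag the one substantive hypothesis left implicit, namely that $D=\hom(-,k)$ must preserve quasi-isomorphisms for the final chain of equivalences to go through, which holds here since the paper works over $k=\Q$.
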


For the rest of the section, we will focus on analyzing the homology of $F \stackrel{h}{\otimes}  G$. Ultimately we will work with rational chain complexes, so the homology groups of $D(F \stackrel{h}{\otimes}  G)$ are just the vector space duals of the homology of $F \stackrel{h}{\otimes}  G$.

Our approach to analyzing $F \stackrel{h}{\otimes}  G$ is to filter the category $\Epi$ by cardinality, use this to construct a filtration of $F \stackrel{h}{\otimes}  G$, and thus obtain a spectral sequence for calculating its homology groups.
\begin{definition} For each $k\ge 0$, let $\Epi_{\le k}$ be the full subcategory of $\Epi$ consisting of objects of cardinality $\le k$. Let $F$ and $G$ be a right and a left $\Epi$-module respectively. By restriction, $F$ and $G$ may also be considered as $\Epi_{\le k}$-modules. Let $F \stackrel{h}{\otimes}_{\le k}  G$ be the derived coend between $\Epi_{\le k}$-modules.
\end{definition}
One obtains a filtration of $F \stackrel{h}{\otimes}  G$ by truncated homotopy coends.
\begin{equation}\label{eq: filtration}
F \stackrel{h}{\otimes}_{\le 0}  G\longrightarrow F \stackrel{h}{\otimes}_{\le 1}  G\longrightarrow \cdots \longrightarrow F \stackrel{h}{\otimes}  G.
\end{equation}

We would like to analyze the homotopy cofiber of the map $$F \stackrel{h}{\otimes}_{\le k-1}  G\longrightarrow F \stackrel{h}{\otimes}_{\le k}  G.$$ Let $k\downarrow \Epi_{<k}$ be the category whose objects are surjective maps $k\twoheadrightarrow i$, where $i$ is a set that is strictly smaller than $k$, and whose morphisms are the evident commuting triangles. Thus, since $F$ is a contravariant functor from $\Epi$ to the category of chain complexes, it gives rise to a contravariant functor from $k\downarrow \Epi_{<k}$ to chain complexes, given on objects by the formula
$$(k\twoheadrightarrow i)\mapsto F(i).$$
Define $\Delta^kF$ to be the homotopy colimit of this functor. Note that $\Delta^kF$ has a natural action of the symmetric group $\Sigma_k$ and that there is a natural $\Sigma_k$-equivariant map $\Delta^kF\longrightarrow F(k)$.
\begin{lemma}\label{lemma: pushout}
For each $k$, there is a homotopy pushout square
$$\begin{CD}
\Delta^k F\otimes_{h\Sigma_k} G(k) @>>> F \stackrel{h}{\otimes}_{\le k-1}  G\\
@VVV @VVV \\
F(k)\otimes_{h\Sigma_k} G(k) @>>>  F \stackrel{h}{\otimes}_{\le k}  G.
\end{CD}$$
Here the symbol $\otimes_{h\Sigma_k}$ denotes the derived balanced product of two objects with an action of $\Sigma_k$. The left vertical map is induced from the map $\Delta^kF\longrightarrow F(k)$.
\end{lemma}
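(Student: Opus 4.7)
The approach I would take is to realize the square as the result of a cofiber sequence of right $\Epi_{\le k}$-modules, and then apply the derived coend with $G$. Let $\iota\colon \Epi_{\le k-1}\hookrightarrow \Epi_{\le k}$ denote the inclusion, with restriction $\iota^*$ and derived left Kan extension $\iota_!$. I would first compute the derived counit $\iota_!\,\iota^* F\to F$: it is an equivalence at every object of cardinality strictly less than $k$, while at the object $k$ the slice-category formula for $\iota_!$ yields
$$(\iota_!\,\iota^* F)(k)\;\simeq\;\hocolim_{(k\twoheadrightarrow i,\, i<k)} F(i)\;=\;\Delta^k F,$$
and the counit at $k$ is exactly the canonical map $\Delta^k F \to F(k)$ built into the definition of $\Delta^k F$. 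Hence the homotopy cofiber $C$ of $\iota_!\,\iota^* F\to F$ in right $\Epi_{\le k}$-modules is (up to equivalence) supported only at the object $k$, where its value is the cone $F(k)/\Delta^k F$ with its natural $\Sigma_k$-action.

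Next I would apply the derived coend $(-)\stackrel{h}{\otimes}_{\le k} G$ to this cofiber sequence, producing
$$(\iota_!\,\iota^* F)\stackrel{h}{\otimes}_{\le k} G \longrightarrow F\stackrel{h}{\otimes}_{\le k} G \longrightarrow C\stackrel{h}{\otimes}_{\le k} G.$$
Two formal identifications then finish the argument. First, the standard base-change identity for derived coends along a Kan extension gives
$$(\iota_!\,\iota^* F)\stackrel{h}{\otimes}_{\le k} G \;\simeq\; \iota^* F\stackrel{h}{\otimes}_{\le k-1}\iota^* G \;=\; F\stackrel{h}{\otimes}_{\le k-1} G.$$
Second, since $C$ is derived-supported only at $k$, an inspection of the two-sided bar construction $B(C,\Epi_{\le k},G)$ shows it collapses to $B(C(k),\Sigma_k,G(k))$: no composable chain in $\Epi_{\le k}$ involving an object of cardinality $< k$ in the slot paired with $C$ contributes, because morphisms in $\Epi$ are surjective, so any chain landing on a nonzero value of $C$ must have all entries equal to $k$. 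This gives
$$C\stackrel{h}{\otimes}_{\le k} G \;\simeq\; (F(k)/\Delta^k F)\otimes_{h\Sigma_k} G(k).$$

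The resulting cofiber sequence is therefore
$$F\stackrel{h}{\otimes}_{\le k-1} G \longrightarrow F\stackrel{h}{\otimes}_{\le k} G \longrightarrow (F(k)/\Delta^k F)\otimes_{h\Sigma_k} G(k),$$
and since the right-hand term is also the cofiber of the left vertical map $\Delta^k F\otimes_{h\Sigma_k} G(k)\to F(k)\otimes_{h\Sigma_k} G(k)$ in the proposed square, the square of the lemma is exhibited as a homotopy pushout. The main technical point where I expect to need care is the base-change identity $(\iota_!\,\iota^* F)\stackrel{h}{\otimes}_{\le k} G \simeq F\stackrel{h}{\otimes}_{\le k-1} G$ at the derived level: this is formal but should be justified either via the Quillen adjunction between functor categories in the projective model structure, or by directly comparing two-sided bar resolutions and checking that the natural map between them is a term-wise equivalence.
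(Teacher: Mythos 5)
Your proof is correct and follows essentially the same route as the paper's: you work with the derived left Kan extension $\iota_!\,\iota^* F$ along $\Epi_{\le k-1}\hookrightarrow\Epi_{\le k}$ (which the paper denotes $F\!\uparrow_{k-1}^k$), identify its value at $k$ with $\Delta^k F$ and its values below $k$ with those of $F$, and then use the base-change identity $(\iota_!\,\iota^* F)\stackrel{h}{\otimes}_{\le k} G\simeq F\stackrel{h}{\otimes}_{\le k-1} G$ together with the observation that a module concentrated at $k$ pairs with $G$ to give a homotopy $\Sigma_k$-balanced product. The only cosmetic difference is that the paper packages the argument as an objectwise-verified homotopy pushout square of $\Epi_{\le k}$-modules involving $\underline{\Delta^k F}$ and $\underline{F(k)}$ and then applies the coend to the whole square, whereas you take cofibers first and identify the resulting cofiber term; these are equivalent formulations of the same computation.
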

\begin{proof}
The lemma is elementary and essentially well-known. See, for example, the paper of Ahearn and Kuhn \cite[Lemma 3.7 and Corollary 3.8]{AhearnKuhn} for a proof of closely related result in the setting of topological spaces. The proof that we give is a variation of theirs.

Let $F\!\uparrow_{k-1}^k$ be the homotopy left Kan extension of $F$ from $\Epi_{\le k-1}$ to $\Epi_{\le k}$. It is easy to see that the restriction of $F\!\uparrow_{k-1}^k$ to $\Epi_{\le k-1}$ is equivalent to the restriction of $F$ to $\Epi_{\le k-1}$. On the other hand $F\!\!\uparrow_{k-1}^k(k)=\Delta^k F.$ Let $\underline{\Delta^k F}$ be the right $\Epi_{\le k}$-module defined by $\underline{\Delta^k F}(i)=0$ for $i<k$ and $\underline{\Delta^k F}(k)=\Delta^kF.$ Similarly, let $\underline{F(k)}$ be the $\Epi_{\le k}$-module defined by $\underline{F(k)}(i)=0$ for $i<k$ and $\underline{F(k)}(k)=F(k).$ There is a natural square of $\Epi_{\le k}$-modules
$$\begin{CD}
\underline{\Delta^kF} @>>> F\!\uparrow_{k-1}^k \\
@VVV @VVV \\
\underline{F(k)} @>>> F
\end{CD}$$
and we claim that this is a homotopy pushout diagram of $\Epi_{\le k}$-modules. Indeed, for $i<k$, evaluating this square on $i$ yields the square
$$\begin{CD}
0 @>>> F\!\uparrow_{k-1}^k(i) \\
@VVV @VVV \\
0 @>>> F(i)
\end{CD}$$
which is a homotopy pushout square, because the map $ F\!\uparrow_{k-1}^k(i)\longrightarrow F(i)$ is an equivalence. Evaluating the square at $k$ yields the square
$$\begin{CD}
\Delta^kF @>>> \Delta^k F \\
@VVV @VVV \\
F(k) @>>> F(k)
\end{CD}$$
which is also a homotopy pushout square. It follows that there is a homotopy pushout square
$$\begin{CD}
\underline{\Delta^kF} \stackrel{h}{\otimes}_{\le k}  G @>>> F\!\uparrow_{k-1}^k  \stackrel{h}{\otimes}_{\le k}  G\\
@VVV @VVV \\
\underline{F(k)} \stackrel{h}{\otimes}_{\le k}  G @>>> F \stackrel{h}{\otimes}_{\le k}  G.
\end{CD}$$
The lemma now follows from the following three easily verified equivalences
$$\underline{\Delta^kF} \stackrel{h}{\otimes}_{\le k}  G \simeq \Delta^kF \otimes_{h\Sigma_k} G(k),$$
$$\underline{F(k)} \stackrel{h}{\otimes}_{\le k}  G \simeq F(k) \otimes_{h\Sigma_k} G(k),$$
and
$$F\!\uparrow_{k-1}^k  \stackrel{h}{\otimes}_{\le k}  G \simeq F \stackrel{h}{\otimes}_{\le k-1}  G.$$
\end{proof}
\begin{definition}
Let $F$ be a right $\Epi$-module. For $k\ge 0$, let $\Koszul F(k)$ be the homotopy cofiber of the natural map $\Delta^kF\longrightarrow F(k)$. We call the sequence $\{\Koszul F(k)\}_{k=0}^\infty$ the Koszul dual of $F$.
\end{definition}
\begin{corollary}\label{cor: description of cofiber}
The homotopy cofiber of the map $$F \stackrel{h}{\otimes}_{\le k-1}  G\longrightarrow F \stackrel{h}{\otimes}_{\le k}  G$$ is equivalent to $$\Koszul F(k)\otimes_{h\Sigma_k} G(k).$$
\end{corollary}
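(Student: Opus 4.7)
The plan is to deduce the corollary directly from Lemma~\ref{lemma: pushout} together with the standard fact that in any homotopy pushout square, the homotopy cofibers of the two parallel maps agree. So the first step is simply to invoke the lemma, which presents $F \stackrel{h}{\otimes}_{\le k} G$ as a homotopy pushout of $F(k)\otimes_{h\Sigma_k} G(k)$ with $F \stackrel{h}{\otimes}_{\le k-1} G$ along $\Delta^kF \otimes_{h\Sigma_k} G(k)$.

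Because the square is a homotopy pushout, the homotopy cofiber of the right vertical map
$$F \stackrel{h}{\otimes}_{\le k-1} G \longrightarrow F \stackrel{h}{\otimes}_{\le k} G$$
is canonically equivalent to the homotopy cofiber of the left vertical map
$$\Delta^k F \otimes_{h\Sigma_k} G(k) \longrightarrow F(k) \otimes_{h\Sigma_k} G(k).$$
This is a general feature of homotopy pushouts in any stable (or pointed model) setting and requires no further input beyond the lemma.

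It remains to identify this latter cofiber with $\Koszul F(k) \otimes_{h\Sigma_k} G(k)$. By definition, $\Koszul F(k)$ is the homotopy cofiber of $\Delta^k F \to F(k)$ in the category of chain complexes with a $\Sigma_k$-action. Since the derived balanced product $- \otimes_{h\Sigma_k} G(k)$ is a derived left adjoint (to the cotensor with $G(k)$), it preserves homotopy cofiber sequences. Applying it to the cofiber sequence $\Delta^k F \to F(k) \to \Koszul F(k)$ immediately gives that the cofiber of the left vertical map is $\Koszul F(k) \otimes_{h\Sigma_k} G(k)$, completing the argument.

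Since the whole proof is essentially a formal manipulation of homotopy pushouts, there is no substantive obstacle; the only thing one has to be mildly careful about is the equivariance, that is, that the square of Lemma~\ref{lemma: pushout} can be read as a homotopy pushout of $\Sigma_k$-equivariant objects before taking homotopy orbits, so that the identification of the cofiber is functorial in the $\Sigma_k$-action. This is already implicit in the proof of the lemma, which builds the square out of $\Sigma_k$-equivariant pieces $\underline{\Delta^kF} \to \underline{F(k)}$.
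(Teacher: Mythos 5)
Your argument is exactly the intended deduction: the paper states this as a corollary of Lemma~\ref{lemma: pushout} without further elaboration, and your chain — cofibers of parallel maps in a homotopy pushout agree, then $-\otimes_{h\Sigma_k} G(k)$ preserves the cofiber sequence $\Delta^kF \to F(k) \to \Koszul F(k)$ — is precisely the implicit reasoning. Your remark about tracking $\Sigma_k$-equivariance before passing to homotopy orbits is a sensible caution, and as you note it is already handled in the lemma's proof.
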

\begin{remark}
We will point out that in the case when $F$ and $G$ are two right $\Epi$-modules, one can analyze the derived complex of morphisms $\hRmod(F, G)$ in an analogous way. Namely, one can filter the category $\Epi$ by cardinality, and obtain a tower of fibrations converging to $\hRmod(F, G)$:
$$\hRmod(F, G)\longrightarrow \cdots \longrightarrow \hRmod_{\le k}(F, G) \longrightarrow \cdots.$$
There is a description of the homotopy fiber of the restriction map $\hRmod_{\le k}(F, G) \longrightarrow \hRmod_{\le k-1}(F, G)$ that is analogous to Corollary~\ref{cor: description of cofiber}, and is proved in the same way.
\begin{lemma}\label{lem: description of fiber}
Let $F$, $G$ be right $\Epi$-modules. The homotopy fiber of the map $$\hRmod_{\le k}(F, G)\longrightarrow \hRmod_{\le k-1}(F, G)$$ is equivalent to $$\hom(\Koszul F(k), G(k))^{h\Sigma_k}.$$
\end{lemma}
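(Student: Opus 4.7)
The plan mirrors the argument for Lemma~\ref{lemma: pushout} (and Corollary~\ref{cor: description of cofiber}), but dualized: instead of tensoring the pushout square of $\Epi_{\le k}$-modules with $G$, I apply the contravariant functor $\hRmod_{\le k}(-, G)$. First, I would reuse the homotopy pushout square of right $\Epi_{\le k}$-modules from the proof of Lemma~\ref{lemma: pushout}, namely
$$\begin{CD}
\underline{\Delta^k F} @>>> F\!\uparrow_{k-1}^k \\
@VVV @VVV \\
\underline{F(k)} @>>> F.
\end{CD}$$
Applying the derived contravariant functor $\hRmod_{\le k}(-, G)$ converts this into a homotopy pullback square
$$\begin{CD}
\hRmod_{\le k}(F, G) @>>> \hRmod_{\le k}(F\!\uparrow_{k-1}^k, G) \\
@VVV @VVV \\
\hRmod_{\le k}(\underline{F(k)}, G) @>>> \hRmod_{\le k}(\underline{\Delta^k F}, G).
\end{CD}$$

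Next I would identify the non-trivial corners. By the derived adjunction between homotopy left Kan extension and restriction, the top-right corner is naturally equivalent to $\hRmod_{\le k-1}(F, G)$, and under this identification the top horizontal arrow becomes the restriction map appearing in the statement of the lemma. For a module concentrated at the single object $k$ (and hence determined by a $\Sigma_k$-chain complex), derived hom into $G$ computes equivariant homotopy fixed points of the ordinary hom evaluated at $k$, so
$$\hRmod_{\le k}(\underline{F(k)}, G) \simeq \hom(F(k), G(k))^{h\Sigma_k}, \qquad \hRmod_{\le k}(\underline{\Delta^k F}, G) \simeq \hom(\Delta^k F, G(k))^{h\Sigma_k},$$
with the bottom horizontal map induced by the natural map $\Delta^k F \to F(k)$.

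Finally, in a homotopy pullback square the homotopy fibers of the two horizontal maps are equivalent. Since $\hom(-, G(k))$ turns cofiber sequences into fiber sequences, and since $\Koszul F(k)$ is by definition the homotopy cofiber of $\Delta^k F \to F(k)$, the homotopy fiber of the bottom map is $\hom(\Koszul F(k), G(k))^{h\Sigma_k}$, yielding the desired identification. The main point requiring care is the identification $\hRmod_{\le k}(\underline{F(k)}, G) \simeq \hom(F(k), G(k))^{h\Sigma_k}$, but this is standard model-category bookkeeping: a cofibrant replacement of $\underline{F(k)}$ in $\Epi_{\le k}$-modules can be built from a free $\Sigma_k$-resolution of $F(k)$ placed at cardinality $k$ and extended freely, and derived hom out of such a module recovers the homotopy-fixed-points formula.
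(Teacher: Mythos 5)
Your proof is correct and follows exactly the route the paper itself indicates: the remark preceding the lemma states that it is ``proved in the same way'' as Corollary~\ref{cor: description of cofiber}, and you have simply dualized the proof of Lemma~\ref{lemma: pushout} by applying $\hRmod_{\le k}(-, G)$ to the homotopy pushout square of $\Epi_{\le k}$-modules, obtaining a homotopy pullback square whose horizontal fibers agree. The corner identifications via the Kan-extension/restriction adjunction and the $\Sigma_k$-equivariant hom at cardinality $k$, together with the cofiber sequence defining $\Koszul F(k)$, are precisely the ingredients the paper's sketch calls for.
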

\end{remark}
\begin{corollary}\label{cor: elementary calculation}
Let $n\ge 0$ be an integer. Let $F$ and $G$ be right $\Epi$-modules, and suppose that $G(i)\simeq 0$ for $i\ne n$ then
$$\underset{\Epi}{\hRmod}(F, G)\simeq \hom(\Koszul F(n), G(n))^{h\Sigma_n}.$$
\end{corollary}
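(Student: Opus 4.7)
The plan is to deduce Corollary~\ref{cor: elementary calculation} directly from Lemma~\ref{lem: description of fiber} by analyzing the tower of restriction maps
\[
\cdots \longrightarrow \underset{\Epi}{\hRmod}_{\le k}(F,G) \longrightarrow \underset{\Epi}{\hRmod}_{\le k-1}(F,G) \longrightarrow \cdots
\]
whose homotopy limit is $\underset{\Epi}{\hRmod}(F,G)$. The hypothesis $G(i)\simeq 0$ for $i\ne n$ makes this tower degenerate to a single nontrivial stage, so the computation collapses.

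First I would handle the range $k<n$. In that range the restriction of $G$ to $\Epi_{\le k}$ is objectwise contractible, hence $\underset{\Epi}{\hRmod}_{\le k}(F,G)\simeq 0$. In particular $\underset{\Epi}{\hRmod}_{\le n-1}(F,G)\simeq 0$.

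Next I would apply Lemma~\ref{lem: description of fiber} at level $k=n$. The homotopy fiber of $\underset{\Epi}{\hRmod}_{\le n}(F,G)\to \underset{\Epi}{\hRmod}_{\le n-1}(F,G)\simeq 0$ is, by the lemma, equivalent to $\hom(\Koszul F(n),G(n))^{h\Sigma_n}$. Since the base is contractible, the total space is equivalent to the fiber, giving
\[
\underset{\Epi}{\hRmod}_{\le n}(F,G)\simeq \hom(\Koszul F(n),G(n))^{h\Sigma_n}.
\]

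Finally I would handle the range $k>n$. Applying Lemma~\ref{lem: description of fiber} again, the homotopy fiber of $\underset{\Epi}{\hRmod}_{\le k}(F,G)\to \underset{\Epi}{\hRmod}_{\le k-1}(F,G)$ is $\hom(\Koszul F(k),G(k))^{h\Sigma_k}$, which is contractible because $G(k)\simeq 0$. Hence each restriction map for $k>n$ is a weak equivalence, and passing to the homotopy limit of the tower identifies $\underset{\Epi}{\hRmod}(F,G)$ with $\underset{\Epi}{\hRmod}_{\le n}(F,G)$. Composing with the equivalence from the previous step yields the claimed formula. The only conceptual point to be careful about is that the tower actually converges to $\underset{\Epi}{\hRmod}(F,G)$, but this is immediate from the fact that every object of $\Epi$ lies in some $\Epi_{\le k}$, so $\underset{\Epi}{\hRmod}(F,G)\simeq \holim_k \underset{\Epi}{\hRmod}_{\le k}(F,G)$; no serious obstacle arises.
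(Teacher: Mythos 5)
Your proof is correct and follows essentially the same approach as the paper: both analyze the tower $\cdots\to\hRmod_{\le k}(F,G)\to\hRmod_{\le k-1}(F,G)\to\cdots$ whose limit is $\hRmod(F,G)$, observe via Lemma~\ref{lem: description of fiber} that only the $n$-th fiber is nontrivial, and conclude that the limit equals that fiber. Your write-up is a bit more detailed (splitting the tower into ranges $k<n$, $k=n$, $k>n$ and arguing the low range directly from the objectwise contractibility of $G|_{\Epi_{\le k}}$ rather than from the triviality of the fibers), but the underlying argument is the one in the paper.
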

\begin{proof}
It follows from the assumption that $\hom(\Koszul F(i), G(i))^{h\Sigma_i}\simeq 0$ for $i\ne n$. It follows that all the fibers in the tower of fibrations $$\cdots \longrightarrow \hRmod_{\le i}(F, G)\longrightarrow \hRmod_{\le i-1}(F, G)\longrightarrow \cdots $$ are trivial, except for the $n$-th fiber. It follows that the inverse homotopy limit of the tower, which is equivalent to $\underset{\Epi}{\hRmod}(F, G)$, is equivalent to the $n$-th fiber. But the $n$-th fiber is equivalent to
$$\hom(\Koszul F(n), G(n))^{h\Sigma_n}.$$
\end{proof}

Now we go back to assuming that $F$ is a right $\Epi$-module and $G$ is a left module.
\begin{definition}
Corollary~\ref{cor: description of cofiber} gives rise to a first quadrant spectral sequence converging to the homology groups of $F\stackrel{h}{\otimes} G$. The $k$-th column of the spectral sequence is given by the homology groups of $\Koszul F(k)\otimes_{h\Sigma_k} G(k),$ shifted up by $k$. We will call it the {\it Koszul spectral sequence}.
\end{definition}
The first differential in the Koszul spectral sequence is induced by  maps, for each $k\ge 1$
\begin{equation}\label{eq: connecting map}
\Koszul F(k)\otimes_{h\Sigma_k} G(k)\longrightarrow \Sigma\Koszul F(k-1)\otimes_{h\Sigma_{k-1}}G(k-1)
\end{equation}
which are the connecting maps associated with the filtration~\eqref{eq: filtration}. Our next task is to describe these maps explicitly. For this, we need to describe certain structure maps present in Koszul duals of right $\Epi$-modules. Consider the homotopy cofibration sequence
$$F(k)\longrightarrow \Koszul F(k) \longrightarrow \Sigma \Delta^kF.$$ Note that the maps in this sequence are $\Sigma_k$-equivariant. Recall that, by definition
$$\Delta^k F= \underset{k\twoheadrightarrow i\in k\downarrow \Epi_{<k}}{\hocolim} F(i).$$
Let $(k\downarrow\Epi_{<k-1})\subset (k\downarrow\Epi_{<k})$ be the full subcategory consisting of arrows $k\twoheadrightarrow i$ where $i< k-1$. There is a natural inclusion
$$ \underset{k\twoheadrightarrow i\in k\downarrow \Epi_{<k-1}}{\hocolim} F(i)\hookrightarrow  \underset{k\twoheadrightarrow i\in k\downarrow \Epi_{<k}}{\hocolim} F(i).$$
It is not difficult to show that the quotient of this inclusion is naturally equivalent to $\sur(k, k-1)_+ \wedge_{\Sigma_{k-1}} \Koszul F(k-1).$ Here $\sur(m,n)$ denotes the set of surjective functions from the standard set with $m$ elements to the standard set with $n$ elements. Thus we obtain a natural map
$$\Delta^k F\longrightarrow \sur(k, k-1)_+ \wedge_{\Sigma_{k-1}} \Koszul F(k-1).$$
From here, we obtain the following composite map
\begin{equation}\label{eq: quotient}
\Koszul F(k)\longrightarrow\Sigma \Delta^k F \longrightarrow \sur(k, k-1)_+ \wedge_{\Sigma_{k-1}} \Sigma \Koszul  F(k-1).
\end{equation}
\begin{remark}\label{remark: coalgebra structure}
The existence of the map~\eqref{eq: quotient} is closely related to the fact that $\Koszul F$ is the Koszul dual of a right $\Epi$-module $F$, and therefore is a right comodule over a suitable version of the Lie cooperad. See the papers of Fresse~\cite{Fresse} and Ching~\cite{Ching} for more detail.
\end{remark}

The following lemma is an exercise in manipulating colimits.
\begin{lemma}\label{lemma: connecting map}
The connecting map~\eqref{eq: connecting map} is equivalent to the following composition of maps
\begin{multline*}
\Koszul F(k)\otimes_{h\Sigma_k} G(k)\longrightarrow \left(\sur(k, k-1)_+ \wedge_{\Sigma_{k-1}} \Sigma \Koszul  F(k-1)\right) \otimes_{h\Sigma_k} G(k) \stackrel{\simeq}{\longrightarrow } \\  \stackrel{\simeq}{\longrightarrow}\Sigma\Koszul F(k-1) \otimes_{h\Sigma_{k-1}}\left(G(k)\wedge_{\Sigma_k} \sur(k, k-1)_+\right) \longrightarrow \Sigma\Koszul F(k-1)\otimes_{h\Sigma_{k-1}}G(k-1)
\end{multline*}
Here the first map is induced by the composed map~\eqref{eq: quotient}, the second map is just regrouping, and the last map is induced by the $\Sigma_{k-1}$-equivariant map
$G(k)\otimes_{h\Sigma_k} \sur(k, k-1)_+\longrightarrow G(k-1)$ which arises from the left $\Epi$-module structure on $G$.
\end{lemma}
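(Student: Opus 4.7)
The strategy is to trace through the connecting map of the filtration~\eqref{eq: filtration} using the explicit homotopy pushout square provided by Lemma~\ref{lemma: pushout}, then use a second filtration — this time of $\Delta^k F$ by the cardinality of $i$ — to pick out the component landing in $\Sigma \Koszul F(k-1) \otimes_{h\Sigma_{k-1}} G(k-1)$.

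First, I would observe that Lemma~\ref{lemma: pushout} exhibits $F\stackrel{h}{\otimes}_{\le k} G$ as the pushout of $F(k)\otimes_{h\Sigma_k}G(k) \leftarrow \Delta^k F \otimes_{h\Sigma_k} G(k) \to F\stackrel{h}{\otimes}_{\le k-1}G$. By general properties of homotopy pushouts, the boundary map of the cofiber sequence $F\stackrel{h}{\otimes}_{\le k-1}G \to F\stackrel{h}{\otimes}_{\le k}G \to \Koszul F(k)\otimes_{h\Sigma_k} G(k)$ factors canonically as
$$\Koszul F(k)\otimes_{h\Sigma_k} G(k) \xrightarrow{\partial\otimes 1} \Sigma\bigl(\Delta^k F \otimes_{h\Sigma_k} G(k)\bigr) \longrightarrow \Sigma\bigl(F\stackrel{h}{\otimes}_{\le k-1}G\bigr),$$
where $\partial\otimes 1$ is induced by the boundary $\Koszul F(k)\to \Sigma\Delta^k F$ of the defining cofiber sequence of $\Koszul F(k)$, and the second arrow is induced by the right-hand map of the pushout square. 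The spectral-sequence differential~\eqref{eq: connecting map} is obtained by further composing with the projection $\Sigma(F\stackrel{h}{\otimes}_{\le k-1}G) \twoheadrightarrow \Sigma(\Koszul F(k-1)\otimes_{h\Sigma_{k-1}}G(k-1))$.

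Second, I would identify the composite
$$\Delta^k F \otimes_{h\Sigma_k} G(k) \longrightarrow F\stackrel{h}{\otimes}_{\le k-1}G \twoheadrightarrow \Koszul F(k-1) \otimes_{h\Sigma_{k-1}} G(k-1).$$
Filter $\Delta^k F$ by the subcategory $k\downarrow\Epi_{<k-1}\subset k\downarrow\Epi_{<k}$; by the discussion preceding the lemma, the cofiber is $\sur(k,k-1)_+ \wedge_{\Sigma_{k-1}} \Koszul F(k-1)$. The subpart $\underset{k\twoheadrightarrow i\in k\downarrow\Epi_{<k-1}}{\hocolim} F(i) \otimes_{h\Sigma_k}G(k)$ maps into $F\stackrel{h}{\otimes}_{\le k-2}G$ and therefore dies in the quotient $\Koszul F(k-1)\otimes_{h\Sigma_{k-1}}G(k-1)$. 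Hence the composite factors through
$$\bigl(\sur(k,k-1)_+ \wedge_{\Sigma_{k-1}} \Koszul F(k-1)\bigr) \otimes_{h\Sigma_k} G(k) \;\cong\; \Koszul F(k-1)\otimes_{h\Sigma_{k-1}} \bigl(G(k)\wedge_{\Sigma_k}\sur(k,k-1)_+\bigr),$$
the isomorphism being the regrouping appearing in the lemma. Chasing the definitions, the map from the right-hand side to $\Koszul F(k-1)\otimes_{h\Sigma_{k-1}}G(k-1)$ is induced on the second factor by the $\Epi$-module action $\sur(k,k-1)\otimes G(k)\to G(k-1)$.

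Third, combining the factorization from the first step with the identification from the second step precisely yields the four-term composition stated in the lemma. The only nontrivial point is the factorization of the structure map~\eqref{eq: quotient} as the boundary $\Koszul F(k)\to\Sigma\Delta^k F$ followed by projection onto the top layer $\Sigma(\sur(k,k-1)_+\wedge_{\Sigma_{k-1}}\Koszul F(k-1))$; the rest is colimit manipulation. The main technical obstacle, such as it is, will be bookkeeping the several $\Sigma_k$- and $\Sigma_{k-1}$-equivariance structures and verifying that the regrouping isomorphism is compatible with the action maps coming from the $\Epi$-module structures on $F$ and $G$; this is essentially the same calculation that justifies the identification of the cofiber of $\Delta^k_{<k-1}F\hookrightarrow \Delta^kF$ that was asserted just before the lemma.
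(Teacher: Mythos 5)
Your proposal is correct and is essentially the argument the authors had in mind: the paper offers no written proof (it is dismissed as ``an exercise in manipulating colimits''), so there is no alternative route to compare against, and the two-step strategy you use — first exploiting the pushout square of Lemma~\ref{lemma: pushout} to factor the connecting map through $\Sigma\bigl(\Delta^k F\otimes_{h\Sigma_k}G(k)\bigr)$, then filtering $\Delta^k F$ by the subposet $k\downarrow\Epi_{<k-1}$ to identify the further projection onto $\Sigma\Koszul F(k-1)\otimes_{h\Sigma_{k-1}}G(k-1)$ — is the unique natural one. The one place that merits a little more care in a fully written-up version is the claim that the restriction of the map $\Delta^k F\otimes_{h\Sigma_k}G(k)\to F\stackrel{h}{\otimes}_{\le k-1}G$ to the sub-hocolim over $k\downarrow\Epi_{<k-1}$ coincides with the map to $F\stackrel{h}{\otimes}_{\le k-2}G$ followed by the inclusion; this follows because $\Delta^k_{<k-1}F = F\!\uparrow_{k-2}^k(k)$ and the comparison of Kan extensions $F\!\uparrow_{k-2}^k\to F\!\uparrow_{k-1}^k$ is compatible with both tensor products (the same kind of identification already used in the proof of Lemma~\ref{lemma: pushout}). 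With that observation in hand the factorization through the quotient $\sur(k,k-1)_+\wedge_{\Sigma_{k-1}}\Koszul F(k-1)$ and the regrouping of the balanced products are straightforward, as you say, and the argument is complete.
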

Since $F$ and $G$ take values in the category of chain complexes, where suspension is invertible, the connecting map can also be written as having the form $$\Sigma^{-1} \Koszul F(k)\otimes_{h\Sigma_k} G(k)\longrightarrow \Koszul F(k-1)\otimes_{h\Sigma_{k-1}} G(k-1).$$
We obtain the following proposition.
\begin{proposition}\label{proposition: Koszul spectral sequence}
Filtration by cardinality gives rise to a spectral sequence (the Koszul spectral sequence), calculating $\HH(F\stackrel{h}{\otimes} G)$. The first page of the Koszul spectral sequence has the following form.
\begin{multline} \label{eq: Koszul complex}
\HH(\Koszul F(0)\otimes G(0))\longleftarrow \HH(\Sigma^{-1}\Koszul F(1)\otimes G(1))\longleftarrow \\ \longleftarrow \HH(\Sigma^{-2}\Koszul F(2)\otimes_{h\Sigma_2}G(2))\longleftarrow  \cdots\longleftarrow \HH(\Sigma^{-k}\Koszul F(k)\otimes_{h\Sigma_k} G(k))\longleftarrow \cdots
\end{multline}
Here each term describes a column in the spectral sequence page, the homomorphisms constitute the first differential, and they are induced by the map described in Lemma~\ref{lemma: connecting map}
\end{proposition}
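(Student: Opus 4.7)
The plan is to assemble the standard spectral sequence of an exhaustive filtration, using the filtration \eqref{eq: filtration} as input. The three ingredients I need are: strong convergence, identification of the $E_1$ page, and identification of the $d_1$ differential. Each of these should follow almost formally from results already in place; the proof is really bookkeeping rather than new work.

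First I would verify convergence. The natural map $\colim_k F \stackrel{h}{\otimes}_{\le k} G \longrightarrow F \stackrel{h}{\otimes} G$ is an equivalence because $\Epi = \bigcup_k \Epi_{\le k}$ and derived coend commutes with filtered colimits in the indexing category, and because homology of chain complexes commutes with sequential colimits. Hence the standard spectral sequence of the filtered complex is strongly convergent to $\HH(F \stackrel{h}{\otimes} G)$.

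Next I would identify the $E_1$ page. By construction, the $k$-th column of $E_1$ computes the homology of the associated graded piece, which is the homotopy cofiber of $F \stackrel{h}{\otimes}_{\le k-1} G \longrightarrow F \stackrel{h}{\otimes}_{\le k} G$. Corollary~\ref{cor: description of cofiber} identifies this cofiber with $\Koszul F(k) \otimes_{h\Sigma_k} G(k)$. The suspension shift $\Sigma^{-k}$ that appears in \eqref{eq: Koszul complex} is just the usual reindexing that records the filtration degree: the $k$-th associated graded sits in filtration level $k$, which contributes a shift by $k$ when one reads off the $E_1$ column as a chain complex in internal degree.

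Finally, the $d_1$ differential is by definition the boundary map of the triple $(F \stackrel{h}{\otimes}_{\le k} G,\, F \stackrel{h}{\otimes}_{\le k-1} G,\, F \stackrel{h}{\otimes}_{\le k-2} G)$, i.e., the composite of the connecting map from the $k$-th associated graded to $\Sigma(F \stackrel{h}{\otimes}_{\le k-1} G)$ with the projection to $\Sigma(\text{the }(k-1)\text{-st associated graded})$. This is precisely the map~\eqref{eq: connecting map}, and its explicit form in terms of \eqref{eq: quotient} and the $\Epi$-action on $G$ is the content of Lemma~\ref{lemma: connecting map}. The only thing remaining is to observe that after the $\Sigma^{-k}$ reindexing the connecting map becomes a map $\Sigma^{-k}\Koszul F(k) \otimes_{h\Sigma_k} G(k) \to \Sigma^{-(k-1)}\Koszul F(k-1) \otimes_{h\Sigma_{k-1}} G(k-1)$, as displayed in \eqref{eq: Koszul complex}. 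The main (minor) obstacle is keeping the signs and suspension conventions consistent; no new homotopical input is required beyond Corollary~\ref{cor: description of cofiber} and Lemma~\ref{lemma: connecting map}.
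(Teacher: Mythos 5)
Your proof is correct and follows essentially the same route as the paper: the paper itself treats the proposition as a summary of the preceding material (the filtration \eqref{eq: filtration}, Corollary~\ref{cor: description of cofiber} identifying the associated graded, and Lemma~\ref{lemma: connecting map} identifying the $d_1$ differential), and your proposal assembles exactly these ingredients, with the small addition of spelling out the convergence argument via $\colim_k F\stackrel{h}{\otimes}_{\le k}G\simeq F\stackrel{h}{\otimes}G$, which the paper leaves implicit in calling it a first-quadrant spectral sequence.
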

We will sometimes like to think of the first page of the Koszul spectral sequence, together with the first differential, as a chain complex of graded abelian groups.

\section{The Koszul dual of $Q^m_s$}\label{section: Koszul dual}
Our goal for the rest of the paper is to apply the theory of the previous section to the calculation of the homology of the mapping complexes $\underset{\Epi}{\hRmod}\left(Q^m_s,\hat\HH^n_t \right)$ that appear in Corollary~\ref{corollary: of main theorem}. Thus we need to describe the module $\hat\HH^n_t$, as well as the Koszul dual of the module $Q^m_s$, as explicitly as possible.

In this section we describe the Koszul dual of $Q^m_s$, which we continue denoting $\Koszul Q^m_s$. Recall that the right $\Epi$-module $Q^m_s$ is defined by the formula
$$Q^m_s=\widetilde \HH_{ms}(S^{m-}).$$
Recall that $\Delta^kS^m$ is the fat diagonal in $S^{mk}$. Let $S^{mk}/\Delta^kS^m$ be the quotient space. It is well known that the reduced homology of this space is concentrated in degrees $k+s(m-1)$, where $s=0,1,\ldots, k$.
\begin{proposition}\label{prop: description of KQ}
For each $k$, $\Koszul Q^m_s(k)$ is quasi-isomorphic to a chain complex concentrated in a single homological degree. Moreover, if $m>1$ then there is an equivalence of chain complexes
$$\Koszul Q^m_s(k)\cong \widetilde\HH_{k+s(m-1)}\left(S^{mk}/\Delta^kS^m\right).$$
\end{proposition}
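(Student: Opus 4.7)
The plan is to interpret $\Koszul$ geometrically by identifying $\Koszul \widetilde{\HH}(S^{m\bullet})(k)$ with $\widetilde{\HH}(S^{mk}/\Delta^kS^m)$, and then to extract the $s$-summand $\Koszul Q^m_s(k)$ by matching homological degrees.

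First, I would identify $\Koszul$ for the chain-level enveloping module. Consider the right $\Epi$-module $\widetilde{\chains}(S^{m\bullet})$ with $\Epi$-structure induced by the diagonal embeddings $S^{mi} \hookrightarrow S^{mk}$ associated to surjections $\alpha\colon k \twoheadrightarrow i$. I would show that the natural map
\[
\Delta^k\widetilde{\chains}(S^{m\bullet}) \;=\; \hocolim_{(k\twoheadrightarrow i,\, i<k)} \widetilde{\chains}(S^{mi}) \;\longrightarrow\; \widetilde{\chains}(\Delta^kS^m)
\]
is a quasi-isomorphism. Topologically this says the fat diagonal is the homotopy colimit of its diagonal subspheres, which follows from a standard cofibrancy argument since the diagonal inclusions are closed CW inclusions whose intersection pattern is captured by $k \downarrow \Epi_{<k}$. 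Taking cofibers of $\widetilde{\chains}(\Delta^kS^m) \to \widetilde{\chains}(S^{mk})$ then yields $\Koszul\widetilde{\chains}(S^{m\bullet})(k) \simeq \widetilde{\chains}(S^{mk}/\Delta^kS^m)$.

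Next, I would use formality and the weight splitting. Each sphere is integrally formal as a chain complex, and the $\Epi$-structure maps of $\widetilde{\HH}(S^{m\bullet})$ vanish on all non-iso surjections for dimensional reasons, so the derived Hom obstructions between the graded pieces vanish and we obtain an integral equivalence of right $\Epi$-modules $\widetilde{\chains}(S^{m\bullet}) \simeq \widetilde{\HH}(S^{m\bullet}) = \bigoplus_s Q^m_s$. Combined with the previous step and the formality of the wedge of spheres $S^{mk}/\Delta^kS^m$, this gives
\[
\bigoplus_{s\ge 0} \Koszul Q^m_s(k) \;\simeq\; \Koszul\widetilde{\chains}(S^{m\bullet})(k) \;\simeq\; \widetilde{\HH}(S^{mk}/\Delta^kS^m).
\]

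Finally, I would match summands by homological degree. As noted earlier in the excerpt, $\widetilde{\HH}_*(S^{mk}/\Delta^kS^m)$ is concentrated in the degrees $k + s(m-1)$ for $s = 0, 1, \ldots, k$, the $s$-th piece arising from tuples with exactly $s$ distinct non-basepoint coordinates. This stratification matches the weight decomposition $\bigoplus_s Q^m_s$: since $Q^m_s$ has cardinality $s$ and homological degree $ms$, applying $\Koszul$ shifts it to degree $k + s(m-1)$. When $m > 1$ the target degrees are distinct, so the summand structure is determined by homological degree alone, yielding $\Koszul Q^m_s(k) \cong \widetilde{\HH}_{k+s(m-1)}(S^{mk}/\Delta^kS^m)$ and, \emph{a fortiori}, the single-degree concentration. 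For $m=1$ a separate direct computation of the hocolim over the subgroupoid of surjections $k \twoheadrightarrow s$ gives the weaker concentration statement. The main obstacle is the first step: verifying that $\Delta^kS^m$ really is the homotopy colimit of its diagonal subspheres. The indexing category $k \downarrow \Epi_{<k}$ has intricate combinatorial structure, and a careful projective (or Reedy) cofibrancy argument is needed to match the strict colimit with the homotopy colimit, with additional bookkeeping to track the $\Sigma_k$-equivariance used in later sections.
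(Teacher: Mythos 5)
Your overall strategy diverges from the paper's in a substantive way, and it has a real gap at its crux.

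The step in trouble is the claim of an \emph{integral} equivalence of right $\Epi$-modules $\widetilde\chains(S^{m\bullet}) \simeq \widetilde\HH(S^{m\bullet}) = \bigoplus_s Q^m_s$. You assert this follows because ``the $\Epi$-structure maps of $\widetilde\HH(S^{m\bullet})$ vanish on all non-iso surjections for dimensional reasons, so the derived Hom obstructions between the graded pieces vanish.'' The first half is correct (a surjection $k\twoheadrightarrow i$ with $i<k$ induces a map $\widetilde\HH(S^{mi})\to\widetilde\HH(S^{mk})$ between complexes concentrated in different degrees, hence zero), but the second half does not follow. The relevant obstruction to splitting off $Q^m_s$ from the truncation $\widetilde\chains(S^{m\bullet})_{\le k}$ lives in $\HH_0$ of a derived hom of $\Sigma_k$-complexes, which by Lemma~\ref{lem: description of fiber} has the form $\hom(\Koszul F(k), G(k))^{h\Sigma_k}$. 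The homotopy fixed point spectral sequence for that $\Sigma_k$-object has $E_2$-term $H^p(\Sigma_k; -)$, and these group cohomology groups carry torsion; vanishing of the underlying maps in no way forces vanishing of these Ext-classes. Indeed, already for $m=2,\ k=2$ the obstruction group contains a $\Z/2$ coming from $H^2(\Sigma_2;\Z)$. The paper's own formality argument for exactly this module (Lemma~\ref{lemma: easy formality}) is explicitly rational and uses the fact that ``over the rationals, invariants are equivalent to derived invariants''; over $\Z$ this step breaks. Your approach therefore proves at best a rational version of the proposition, whereas the statement (and the paper's proof) are integral.

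A second, smaller gap: even granting the splitting, your matching-by-degree argument requires knowing in advance that each $\Koszul Q^m_s(k)$ is homologically concentrated in degree $k+(m-1)s$. You assert this from a ``degree count'' (cardinality $s$ plus degree $ms$ gives $k+s(m-1)$), but this is not a formal consequence; it needs the computation of $\Delta^k Q^m_s$ as a homotopy colimit over $k\downarrow\Epi_{<k}$, which is exactly where the nontrivial combinatorics lives. Without doing that computation, the matching with the summands of $\widetilde\HH(S^{mk}/\Delta^kS^m)$ is not forced. Your ``separate direct computation of the hocolim'' for $m=1$ hints at this, but is left unspecified.

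The paper avoids both issues by never invoking formality of the whole diagram. Instead it observes that $Q^m_s$ is an \emph{isolated} module (nonzero only at $\bullet=s$), hence is weakly equivalent, using only $\Sigma_s$-equivariant formality of a single sphere, to $\widetilde\chains$ of the spectral module $I^{S^m}_s$ of Definition~\ref{definition: I_X,s}. It then computes $\Koszul I^X_s(k)\simeq D_s(\Sigma^\infty X^{\wedge k}/\Delta^kX)$ via Goodwillie calculus (Lemma~\ref{lemma: Koszul by differentiation}), and uses the partition complex description of these layers (Lemma~\ref{lemma: general formula}) to exhibit $\Koszul I^{S^m}_s(k)$ as a wedge of spheres of dimension $k+(m-1)s$. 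This simultaneously gives the single-degree concentration for \emph{all} $m\ge 1$, the explicit partition-complex formula used in later sections, and, when $m>1$, the identification with $\widetilde\HH_{k+s(m-1)}(S^{mk}/\Delta^kS^m)$ by reading off the layer $D_s$ of the Goodwillie tower. Your Step 1 (identifying $\Koszul\widetilde\chains(S^{m\bullet})(k)\simeq\widetilde\chains(S^{mk}/\Delta^kS^m)$) is correct and parallels the paper's observation $\Koszul\Sigma^\infty X^{\wedge\bullet}\simeq \Sigma^\infty X^{\wedge\bullet}/\Delta^\bullet X$, but the rest of your route does not close the argument as stated.
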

To prove the proposition, we will analyze the Koszul dual of a couple of right $\Epi$-modules of a more general type. 
\begin{definition}\label{definition: koszul duals}
Let $X$ be a pointed topological space. Let $\Sigma^\infty X^{\wedge -}$ be the right $\Epi$-module defined by the formula $i\mapsto \Sigma^\infty X^{\wedge i}$. The convention is that $X^{\wedge 0}=S^0$. Given a surjective map $i\twoheadrightarrow j$, the corresponding map $\Sigma^\infty X^{\wedge j} \to \Sigma^\infty X^{\wedge i}$ is defined using the diagonal inclusion.
\end{definition}
Let $\Delta^kX$ be the fat diagonal in $X^{\wedge k}$. It follows easily from the definition that the Koszul dual of the module $\Sigma^\infty X^{\wedge -}$ is given by the formula
$$\Koszul \Sigma^\infty X^{\wedge -}\simeq \Sigma^\infty X^{\wedge -}/\Delta^- X.$$
\begin{definition}\label{definition: I_X,s}
Fix an integer $s\ge 0$, and Let $I^X_s$ be the evident right $\Epi$-module defined by the formula
$$I^X_s(k)=\left\{\begin{array}{ll}
\Sigma^\infty X^{\wedge k} & k=s \\
* & k\ne s \end{array}\right. $$
\end{definition}
Observe that $I^X_s$ is related to $\Sigma^\infty X^{\wedge -}$ via Goodwillie differentiation. Let $F$ be a functor from the category of pointed spaces to the category of spectra (or spaces). Let $D_sF(X)$ be the $s$-th layer in the Goodwillie tower of $F$. In particular, for fixed integers $k, s$
$$D_s(\Sigma^\infty X^{\wedge k})=\left\{\begin{array}{ll}
\Sigma^\infty X^{\wedge k} & k=s \\
* & k\ne s \end{array}\right. $$
In other words, there is an equivalence of right $\Epi$-modules
$$D_s(\Sigma^\infty X^{\wedge -})\simeq I^X_s.$$
We can use this to give a description of $\Koszul I^X_s$.
\begin{lemma}\label{lemma: Koszul by differentiation}
For each $k$, there is an equivalence
$$\Koszul I^X_s(k)\simeq D_s\left(\Sigma^\infty X^{\wedge k}/\Delta^kX\right).$$
\end{lemma}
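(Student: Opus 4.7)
The plan is to prove this by combining two observations: that $I^X_s$ is itself the $s$-th Goodwillie layer of $\Sigma^\infty X^{\wedge\bullet}$ as a functor of $X$, and that the Koszul dual construction commutes with Goodwillie differentiation.

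First I would recall that for each fixed $k$, the functor $X \mapsto \Sigma^\infty X^{\wedge k}$ is homogeneous of degree $k$, so its $s$-th Goodwillie layer $D_s(\Sigma^\infty X^{\wedge k})$ is $\Sigma^\infty X^{\wedge k}$ when $k=s$ and contractible otherwise. Comparing this with Definition~\ref{definition: I_X,s}, we obtain a natural equivalence of right $\Epi$-modules
\begin{equation*}
I^X_s(\bullet) \simeq D_s\bigl(\Sigma^\infty X^{\wedge \bullet}\bigr),
\end{equation*}
where $D_s$ is applied objectwise in the $\bullet$-variable (the right $\Epi$-module structure is natural in $X$, and $D_s$ is a natural operation on functors of $X$, so the $\Epi$-module structure is preserved).

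Next I would observe that the Koszul dual construction $F \mapsto \Koszul F(k)$ is, by definition, the homotopy cofiber of the map $\Delta^k F \to F(k)$, where $\Delta^kF$ is itself a finite homotopy colimit over $k\downarrow\Epi_{<k}$. Since $D_s$, viewed as an operation on functors of $X$ with values in spectra, is a stable (and in particular homotopy colimit preserving, cofiber-sequence preserving) operation, applying $D_s$ to the defining cofiber sequence of $\Koszul \Sigma^\infty X^{\wedge\bullet}(k)$ commutes with forming the cofiber and with forming the homotopy colimit $\Delta^k$. Thus
\begin{equation*}
D_s\bigl(\Koszul \Sigma^\infty X^{\wedge \bullet}(k)\bigr) \simeq \Koszul\bigl(D_s \Sigma^\infty X^{\wedge\bullet}\bigr)(k) \simeq \Koszul I^X_s(k).
\end{equation*}

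Finally, I would invoke the calculation recorded just before the lemma, namely $\Koszul \Sigma^\infty X^{\wedge\bullet}(k) \simeq \Sigma^\infty X^{\wedge k}/\Delta^k X$, and combine it with the two displays above to conclude
\begin{equation*}
\Koszul I^X_s(k) \simeq D_s\bigl(\Sigma^\infty X^{\wedge k}/\Delta^k X\bigr),
\end{equation*}
as desired. The only step requiring genuine care is verifying that $D_s$ commutes with the cofiber and with the homotopy colimit defining $\Delta^k$; this is essentially formal once one notes that $D_s$ is a natural endofunctor on functors $\mathrm{Top}_*\to\Spectra$ that preserves both cofiber sequences and filtered/finite homotopy colimits, but it is the only place where one could lose the equivalence if one is not careful about the model-categorical setup.
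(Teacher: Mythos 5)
Your argument is essentially identical to the paper's: both identify $I^X_s$ with $D_s(\Sigma^\infty X^{\wedge\bullet})$, note that $\Koszul(-)$ is built from homotopy colimits while $D_s$ commutes with homotopy colimits of spectrum-valued functors, and then invoke $\Koszul\,\Sigma^\infty X^{\wedge\bullet}(k)\simeq \Sigma^\infty X^{\wedge k}/\Delta^k X$. Your write-up is just slightly more explicit about why $D_s$ commutes with the cofiber defining the Koszul dual.
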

\begin{proof}
Since the Koszul dual of a right $\Epi$-module is defined using only homotopy colimits, and taking a layer in the Goodwillie tower is an operation that commutes with homotopy colimits of spectrum-valued functors, it follows that the Koszul dual of $I^X_s$ can be obtained by taking the $s$-th layer of the Koszul dual of $\Sigma^\infty X^{\wedge -}$, which is defined by the formula $\Sigma^\infty X^{\wedge -}/\Delta^- X$. In other words, we have natural equivalences
$$\Koszul I^X_s\simeq \Koszul D_s(\Sigma^\infty X^{\wedge -})\simeq  D_s(\Koszul \Sigma^\infty X^{\wedge -})\simeq  D_s\left(\Sigma^\infty X^{\wedge -}/\Delta^- X\right).$$
\end{proof}
The layers of the Goodwillie tower of the functor $\Sigma^\infty X^{\wedge k}/\Delta^k X$ are well understood. They can be described in terms of the space of partitions.
\begin{definition}
Let $\calP_k$ be the poset of partitions of the set $\{1,\ldots, k\}$ ordered by refinements. Note that it has an initial and a final object. Let $|\calP_k|$ be the geometric realization of this poset (a contractible complex). Let $\partial|\calP_k|$ be the subcomplex of $\calP_k$ that is the union of simplices that do not contain both the initial and the final object as a vertex ($\partial|\calP_k|$ can be interpreted as a kind of boundary of $|\calP_k|$). Let $T_k$ be the quotient space
$$T_k:= |\calP_k|/\partial|\calP_k|.$$
\end{definition}

The space $T_k$ often appears in the context of the calculus of functors. It is well-known that $T_k$ is homotopy equivalent to a wedge sum of $(k-1)!$ copies of $S^{k-1}$.

For any finite set $S$, we define $T_S$ to be the space constructed out of the poset of partitions of $S$ in the same way as $T_k$ is constructed out of $\{1,\ldots, k\}$. Clearly, $T_S$ is functorial with respect to isomorphisms of $S$.
\begin{definition}
Suppose $\alpha\in\sur(k,s)$ is a surjective function. Define
$$T_\alpha=T_{\alpha^{-1}(1)}\wedge \ldots \wedge T_{\alpha^{-1}(s)}.$$
\end{definition}
\begin{lemma}\label{lemma: general formula}
There is an equivalence
\begin{equation*}
\Koszul I^X_s(k)= \left(\bigvee_{\alpha\in \sur(k, s)} \Sigma^\infty \, T_{\alpha}\wedge X^{\wedge s}\right)_{\Sigma_s}.
\end{equation*}
\end{lemma}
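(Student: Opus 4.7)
The plan is to apply Lemma~\ref{lemma: Koszul by differentiation} to reduce the problem to computing $D_s F_k(X)$, where $F_k(X) := \Sigma^\infty X^{\wedge k}/\Delta^k X$, and then to extract this Goodwillie layer by analyzing the cross-effects of $F_k$.

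First I would establish the basic wedge-decomposition of $F_k$. Distributivity of smash over wedge gives
$$(X_1 \vee \cdots \vee X_s)^{\wedge k} = \bigvee_{f\colon \{1,\ldots,k\} \to \{1,\ldots,s\}} X_{f(1)} \wedge \cdots \wedge X_{f(k)}.$$
Examining the fat diagonal carefully, two coordinates lying in distinct wedge summands can only be equal at the basepoint, so the fat diagonal intersected with each smash summand is exactly the ``fat diagonal within the blocks of $f$''. This produces
$$F_k(X_1 \vee \cdots \vee X_s) \simeq \bigvee_{f\colon \{1,\ldots,k\}\to \{1,\ldots,s\}} F_{|f^{-1}(1)|}(X_1) \wedge \cdots \wedge F_{|f^{-1}(s)|}(X_s),$$
with the convention $F_0 := S^0$. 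Taking the $s$-th reduced cross-effect, i.e., iterated cofibers along the inclusions of sub-wedges $X_1 \vee \cdots \widehat{X_i} \cdots \vee X_s \hookrightarrow X_1 \vee \cdots \vee X_s$, kills all summands indexed by non-surjective $f$, leaving
$$\CR_s F_k(X_1, \ldots, X_s) \simeq \bigvee_{\alpha \in \sur(k, s)} F_{|\alpha^{-1}(1)|}(X_1) \wedge \cdots \wedge F_{|\alpha^{-1}(s)|}(X_s).$$

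Next I would multilinearize to extract $\partial_s F_k$. The essential input is the identification $\partial_1 F_n \simeq T_n$ as a $\Sigma_n$-spectrum for each $n \ge 1$, where $\Sigma_n$ acts on $T_n$ by block-permutation of partitions. For $n=1$ this reduces to $\partial_1\Sigma^\infty = S^0 = T_1$; for $n \geq 2$ this is classical (Arone--Mahowald). Granting it, and using the definition $T_\alpha = T_{\alpha^{-1}(1)} \wedge \cdots \wedge T_{\alpha^{-1}(s)}$, linearizing each factor produces
$$\partial_s F_k \wedge X_1 \wedge \cdots \wedge X_s \simeq \bigvee_{\alpha \in \sur(k, s)} T_\alpha \wedge X_1 \wedge \cdots \wedge X_s,$$
with $\Sigma_s$ acting on the indexing set by post-composition. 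Setting $X_i = X$ and taking $\Sigma_s$-coinvariants (the action on $\sur(k,s)$ is free so derived and strict coinvariants agree) yields the desired formula.

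The main obstacle is proving $\partial_1 F_n \simeq T_n$ as a $\Sigma_n$-spectrum. One route uses the stratification of $X^{\wedge n}$ by generalized diagonals $X^{\wedge n}_{\geq \pi} \cong X^{\wedge|\pi|}$ indexed by $\pi \in \calP_n$, realizing $\Sigma^\infty \Delta^n X$ as a homotopy colimit over the punctured partition poset. Applying $D_1$ kills all contributions with $|\pi| > 1$ (by polynomial degree), and the surviving $\pi = \hat{1}$ term combined with the poset nerve assembles into a cofiber sequence whose total quotient is $T_n$. Keeping track of the $\Sigma_n$-equivariance throughout this argument is the delicate point.
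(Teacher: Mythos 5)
Your proof takes the same first step as the paper (reduce via Lemma~\ref{lemma: Koszul by differentiation} to computing the Goodwillie layer $D_s$ of $F_k(X) = \Sigma^\infty X^{\wedge k}/\Delta^k X$), but where the paper then simply cites~\cite[Section 2, Remark 2.3]{Schwartz} for this layer, you supply a self-contained cross-effect argument. Your computation is correct: the wedge decomposition of $F_k(X_1\vee\cdots\vee X_s)$, the observation that the fat diagonal meets each smash summand in the ``block-wise'' fat diagonal, and the passage to the $s$-th cross-effect killing the non-surjective indices are all right, and they isolate the combinatorics of $\sur(k,s)$ cleanly. The remaining ingredient, $\partial_1 F_n\simeq T_n$ as a $\Sigma_n$-spectrum, is indeed the crux, and it is precisely the content of the reference~\cite{Schwartz} that the paper relies on; your proposed stratification-by-partitions argument is the right idea, though as you note the $\Sigma_n$-equivariant bookkeeping is the delicate part. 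One small caveat on attribution: Arone--Mahowald concerns $\partial_n\mathrm{Id}$ for the identity functor on spaces, which is (a shifted) Spanier--Whitehead \emph{dual} of $T_n$, not $T_n$ itself; the two statements are related by a Koszul-duality phenomenon but are not the same theorem, and the cleaner reference for $\partial_1(\Sigma^\infty X^{\wedge n}/\Delta^n X)\simeq T_n$ is the one the paper cites. What your approach buys is transparency about where the set $\sur(k,s)$ and the $\Sigma_s$-free action come from, at the cost of needing to prove (or at least properly attribute) the derivative identification $\partial_1 F_n\simeq T_n$ that the paper treats as a black box.
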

\begin{proof}
This follows from Lemma~\ref{lemma: Koszul by differentiation} and the calculations in~\cite[Section 2]{Schwartz} (in particular, remark 2.3 in [op. cit.]).
\end{proof}
\begin{proof}[Proof of Proposition~\ref{prop: description of KQ}]
We noted already that the chain complex $ \widetilde\HH(S^{ms})=Q^m_s(s)$ is $\Sigma_s$-equivariantly quasi-isomorphic to the complex $\widetilde\chains(S^{ms})$. Since $k =s$ is the only value of  $k$ for which $Q^m_s(k)$ is not zero, it follows by an easy argument that the right $\Epi$-module $ Q^m_s$ is weakly equivalent to the right $\Epi$-module that has value  $\widetilde\chains(S^{ms})$ in degree $s$, and is zero otherwise. Using chains on spectra, and the natural equivalence $\chains(X)\simeq \chains(\Sigma^\infty X)$, we can identify $Q^m_s$ with the module that has value $\widetilde\chains(\Sigma^\infty S^{ms})$ in degree $s$, and is zero otherwise. Since the singular chains functor preserves homotopy colimits, we just need to calculate the Koszul dual of the spectrum-valued $\Epi$-module that has value $\Sigma^\infty S^{ms}$ in degree $s$ and is $*$ otherwise. This is exactly the module $I^{S^m}_s$, a special case of the module $I^{X}_s$ of Definition~\ref{definition: I_X,s}. We saw that its Koszul dual $\Koszul I^{S^m}_s$ is given by the following formula (it is a special case of Lemma~\ref{lemma: general formula})
$$\Koszul I^{S^m}_s(k)\simeq \left(\bigvee_{\alpha\in \sur(k, s)} \Sigma^\infty \, T_\alpha \wedge S^{ms}\right)_{\Sigma_s}.$$
Note that the action of $\Sigma_s$ on the set $\sur(k,s)$ is free. It follows that the module $\Koszul Q^m_s$, which is equivalent to $\widetilde\chains(\Koszul I_{S^m, s})$ is given by the following formula
\begin{equation}\label{eq: preliminary model for KQ} \Koszul Q^m_s(k)\simeq \widetilde\chains \left( \bigvee_{\alpha\in \sur(k, s)} \left(T_\alpha \wedge S^{ms}\right)\right)_{\Sigma_s}.\end{equation}
Notice that for all $\alpha\in \sur(k,s)$, $T_\alpha$ is homotopy equivalent to a wedge of spheres of dimension $k-s$. It follows that $\Koszul I^{S^m}_s(k)$ is equivalent to a wedge of spheres of dimension $k+(m-1)s$. Therefore $\Koszul Q^m_s(k)$ is homologically concentrated in dimension $k+(m-1)s$, and there is an equivalence
\begin{equation}\label{eq: final model for KQ} \Koszul Q^m_s(k)\simeq \widetilde\HH \left( \bigvee_{\alpha\in \sur(k, s)} \left(T_\alpha \wedge S^{ms}\right)\right)_{\Sigma_s}.\end{equation}

Moreover, recall that 
For each $k$, there is an equivalence (Lemma~\ref{lemma: Koszul by differentiation})
$$\Koszul I^X_s(k)\simeq D_s\left(\Sigma^\infty X^{\wedge k}/\Delta^kX\right).$$
We saw that if $X=S^m$, $D_s\left(\Sigma^\infty X^{\wedge k}/\Delta^kX\right)$ is homologically concentrated in dimension $k+(m-1)s$. Assuming $m>1$, we see that this dimension is a strictly increasing funciton of $s$. Since the spectra $D_s\left(\Sigma^\infty X^{\wedge k}/\Delta^kX\right)$ are the layers in the Taylor tower of the functor $\Sigma^\infty X^{\wedge k}/\Delta^k X$, it follows easily that when $X=S^m$ and $m>1$,  $D_s\left(\Sigma^\infty X^{\wedge k}/\Delta^kX\right)$ detects the homology of $\Sigma^\infty S^{mk}/\Delta^kS^m$ in dimension $k+(m-1)s$.
\end{proof}
Let us introduce notation that allows us to describe the complex $\Koszul Q^m_s(k)$ even more explicitly.
\begin{definition}
Let $L_k:=\widetilde \HH(T_k)$. Thus, $L_k$ is a complex concentrated in degree $k-1$ and it is isomorphic in this degree to ${\mathbb Z}^{(k-1)!}$. For $\alpha\in\sur(k,s)$, define
$$L_\alpha=\widetilde \HH(T_\alpha)=L_{\alpha^{-1}(1)}\otimes\cdots\otimes L_{\alpha^{-1}(s)}.$$
Also recall that we define $\Z[n]=\widetilde \HH(S^n)$.
\end{definition}
For any $\alpha\in\sur(k,s)$ there is a quasi-isomorphism
$$\widetilde\chains \left(T_\alpha \wedge S^{ms}\right)\simeq L_\alpha \otimes \Z[ms].$$
Using this, together with Equation~\eqref{eq: preliminary model for KQ}, we obtain the following formula
\begin{equation}\label{eq: model for KQ}
\Koszul Q^m_s (k)\simeq \left(\bigoplus_{\alpha\in \sur(k, s)} L_\alpha \otimes\Z[ms]\right)_{\Sigma_s}.
\end{equation}
\section{Homology of configuration spaces, revisited}\label{section: configurations spaces revisited}
In this section we describe more explicitly the right $\Epi$-module $\hat\HH^n_t$. The results are essentially well-known and are included for convenience. We remind the reader that  $\hat\HH^n_t=\hat\HH_{(n-1)t}(\balls_n; \Q)$, where the decoration $\hat{} $ denotes Pirashvili's cross-effect of a right $\Gamma$-module. Thus $\hat\HH^n_t(k)$ is a summand of the rational homology of the configuration space of ordered $k$-tuples of points in $\R^n$. More precisely, $\hat\HH^n_t(k)$ is the summand of the homology that is not detected in configuration spaces of fewer than $k$ points. Let $\widehat\sur(k, k-t)$ be the set of functions from $k$ to $k-t$ for which the inverse image of every point of $k-t$  has at least two elements. Elements of $\widehat\sur(k, k-t)$ correspond to partitions of $k$ with $k-t$ components which do not have singletons for components. The following formula follows easily from the results of \cite{CT:RCCS, ALV, Arone}

\begin{equation}
\hat\HH^n_t(k)\cong \left(\bigoplus_{\beta\in \widehat\sur(k, k-t)}\hom\left(L_\beta, \Q[nt]\right)\right)_{\Sigma_{k-t}}.\end{equation}
where $\Q[nt]$ denotes, as usual, the chain complex $\widetilde\HH(S^{nt};\Q)$. It is worth noting that $S^{nt}$ (and therefore also $\Q[nt]$) depends, in a sense, on the surjection $\beta$. Namely, $\beta$ induces an injective homomorphism
$\Q^{n(k-t)}\hookrightarrow \Q^{nk}$. The quotient of this homomorphism is non-canonically isomorphic to $\Q^{nt}$, and $S^{nt}$ is the one-point compactification of this $\Q^{nt}$. In particular,  this identification plays a role in the action of $\Sigma_{k-t}$ on the direct sum.

Let us use the notation $\Q[-nt]=\hom(\Q[nt], \Q)$. This is, again, a chain complex concentrated in dimension $-nt$. Note that we are dealing with actions of finite groups on rational chain complexes, so in our context derived quotients, strict quotients, derived fixed points and strict fixed points are all naturally equivalent to each other. Elementary manipulations show that the above formula for $\hat\HH^n_t(k)$ is equivalent to the following one
\begin{equation}\label{eq: model for hatHH}
\hat\HH^n_t(k)\cong \hom \left(\left(\bigoplus_{\beta\in \widehat\sur(k, k-t)}L_\beta\otimes \Q[-nt]\right)_{\Sigma_{k-t}}, \Q\right).\end{equation}

The right $\Epi$-module structure on $\hat\HH^n_t(k)$ is, in these terms, dual to a left $\Epi$-module structure on the symmetric sequence
$$k\mapsto \left(\bigoplus_{\beta\in \widehat\sur(k, k-t)} L_\beta \otimes \Q[-nt]\right)_{\Sigma_{k-t}}.$$
The left module structure was essentially described in~\cite{ALV, Arone}. We will give one description of the module structure in Section~\ref{section: forests}.
\section{The Koszul complex}\label{section: Koszul complex}
Now we are ready to describe the Koszul complex for $\underset{\Epi}{\hRmod}\left(Q^m_s, \hat\HH^n_t\right)$. (As a reminder the Koszul complex is actually dual to $\underset{\Epi}{\hRmod}\left(Q^m_s, \hat\HH^n_t\right)$, see Proposition~\ref{prop: coend and hom}.) Since right $\Epi$-module $\hat\HH^n_t$ is the dual of a left $\Epi$-module, we can use the results of Section~\ref{section: Koszul spectral sequence}, and in particular~\eqref{eq: Koszul complex}. By~\eqref{eq: Koszul complex},~\eqref{eq: model for KQ} and~\eqref{eq: model for hatHH}, the Koszul complex has the following form
$$\cdots \longleftarrow \Sigma^{-k}\left(\bigoplus_{\alpha\in \sur(k, s)} L_\alpha \otimes\Z[ms]\right)_{\Sigma_s} \otimes_{_{\Sigma_k}} \left(\bigoplus_{\beta\in \widehat\sur(k, k-t)}L_\beta\otimes \Q[-nt]\right)_{\Sigma_{k-t}}\longleftarrow \cdots $$
To shorten notation, let us introduce some abbreviations. For $\alpha\in\sur(k, s), \beta\in \widehat\sur(k, k-t)$, let $L_{\alpha, \beta}=L_{\alpha}\otimes L_{\beta}$. Also, let us abbreviate $\Sigma^{-k}L_{\alpha,\beta}\otimes \Z[ms]\otimes \Q[-nt]$ as $L_{\alpha,\beta}[ms-nt-k]$. Then the Koszul complex can be written in the following form
\begin{equation}\label{eq: dual Koszul complex}
\cdots \longleftarrow \left(\bigoplus_{\underset{\sur(k, s)\times \widehat\sur(k, k-t) } {\alpha,\beta \in} }L_{\alpha,\beta} [ms-nt-k]\right)_{\Sigma_k\times\Sigma_s\times\Sigma_{k-t}}\longleftarrow \cdots
\end{equation}
\begin{remark}\label{remark: about degree}
Recall that for $\alpha\in\sur(k,s)$, $L_\alpha$ is a chain complex concentrated in degree $k-s$. Similarly $L_\beta$ is concentrated in degree $k-(k-t)$, i.e., $t$. It follows that the $k$-th term in the complex~\eqref{eq: dual Koszul complex} is a chain complex concentrated in degree $(k-s)+t+(ms-nt-k)$, which is $(m-1)s-(n-1)t$. In particular, {\it the degree is independent of $k$}. It follows that~\eqref{eq: dual Koszul complex}, which a-priori is a chain complex of graded vector spaces, is in fact an ordinary chain complex of rational vector spaces, where all the spaces have an internal degree $(m-1)s-(n-1)t$.
\end{remark}
\begin{definition}
We let $\HH\HH^{m,n}_{s,t}$ denote the chain complex~\eqref{eq: dual Koszul complex}. We call $\HH\HH^{m,n}_{s,t}$ the Koszul complex of $\hRmod\left(Q^m_s, \hat\HH^n_t\right)$.
\end{definition}
We saw that $\HH\HH^{m,n}_{s,t}$ is a chain complex of vector spaces, which have an internal grading $(m-1)s-(n-1)t$. It follows that the dual complex $\hom\left(\HH\HH^{m,n}_{s,t},\Q\right)$ is a chain complex of vector spaces with internal grading $(n-1)t-(m-1)s$. We define the cohomology groups $\HH\HH^{m,n}_{s,t}$ to be the homology groups of the dual complex. More precisely we have isomorphisms (the last of which is not natural)
$$\HH^k(\HH\HH^{m,n}_{s,t}):=\HH_{-k}\left(\hom\left(\HH\HH^{m,n}_{s,t},\Q\right)\right)\cong\hom\left(\HH_k(\HH\HH^{m,n}_{s,t}),\Q\right)\cong \HH_k(\HH\HH^{m,n}_{s,t}),$$ where in calculation of homology we forget about the internal grading.  Combining Proposition~\ref{prop: coend and hom}, Proposition~\ref{proposition: Koszul spectral sequence} and Remark~\ref{remark: about degree} we obtain that the cohomology groups of $\HH^{m,n}_{s,t}$ are the homology groups of $\hRmod(Q^m_s, \hat\HH^n_t)$. More explicitly, we have an isomorphism for each $k\ge 0$:
\begin{equation}\label{eq: calculation of Rmod}
 \HH^k\left(\HH^{m,n}_{s,t}\right)\cong \HH_{(n-1)t-(m-1)s-k}\left(\hRmod\left(Q^m_s, \hat\HH^n_t\right)\right).\end{equation}
 \begin{remark}\label{remark: relevant degrees}
Suppose that $s, t>0$. Then $\sur(k,s)\ne \emptyset$ only if $k\ge s$. Similarly $\widehat\sur(k, k-t)\ne \emptyset$ only if $k-t>0$ and $k\ge 2(k-t)$. It follows that the complex $\HH\HH^{m,n}_{s,t}$ is non-zero only in degrees $k$ that satisfy $\max\{s, t+1\}\le k\le 2t$. It follows that $\HH_j\left(\hRmod\left(Q^m_s, \hat\HH^n_t\right)\right)$ can be non-zero only when
$$(n-3)t - (m-1)s \le j \le (n-2)t-(m-1)s-1.$$
Moreover, $\hRmod\left(Q^m_s, \hat\HH^n_t\right)$ is non-trivial only if $s\le 2t$. It follows easily that for fixed $m,n$ satisfying $m\ge 1$ and $n>2m+1$, and for each $j\ge 0$, $\HH_j\left(\hRmod\left(Q^m_s, \hat\HH^n_t\right)\right)$ is non-zero only for finitely many values of $s, t$. It follows that, assuming that $n>2m+1$, the direct product on the right hand side of the formulas in Corollary~\ref{corollary: of main theorem} are equivalent to direct sums. We obtain the following corollary
\begin{corollary}\label{corollary: final homological formula}
Suppose that $n>2m+1$. Then there are isomorphisms
\begin{equation}\label{eq:double_splitting_in_homology}
\widetilde\HH(\Ebarc(\R^m,\R^n);\Q)\cong \bigoplus_{1\le s\le 2t} \HH\left(\hRmod(Q^m_s,\hat\HH^n_t)\right)\cong  \bigoplus_{1\le s\le 2t} \HH^{(n-1)t-(m-1)s-*}(\HH\HH^{m,n}_{s,t}).
\end{equation}
\end{corollary}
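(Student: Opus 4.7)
The plan is to assemble the corollary from three results already in place: Corollary~\ref{corollary: of main theorem}, Remark~\ref{remark: relevant degrees}, and the translation between $\hRmod$ and the Koszul complex given in equation~\eqref{eq: calculation of Rmod}. First I would start from the conclusion of Corollary~\ref{corollary: of main theorem}, which under the hypothesis $n>2m+1$ gives the weak equivalence
\begin{equation*}
\chains^\Q(\Ebar_c(\R^m,\R^n))\simeq \prod_{s,t\ge 0}\underset{\Epi}{\hRmod}\left(Q^m_s(\bullet),\hat\HH^n_t(\bullet)\right).
\end{equation*}
Taking homology in a fixed degree $j$ reduces the corollary to two separate tasks: (a) replace the direct product by a direct sum over the stated range $1\le s\le 2t$, and (b) rewrite the summands in terms of the Koszul complex $\HH\HH^{m,n}_{s,t}$.

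For (a), the key input is the vanishing range given in Remark~\ref{remark: relevant degrees}: the complex $\HH\HH^{m,n}_{s,t}$ (and hence $\hRmod\left(Q^m_s,\hat\HH^n_t\right)$) is non-trivial only when $s\le 2t$, and in that case $\HH_j\left(\hRmod\left(Q^m_s,\hat\HH^n_t\right)\right)$ vanishes unless
\begin{equation*}
(n-3)t-(m-1)s\le j\le (n-2)t-(m-1)s-1.
\end{equation*}
Combined with the inequality $s\le 2t$, the lower bound on $j$ yields $(n-2m-1)t\le j$. Since $n>2m+1$ we have $n-2m-1>0$, so this bounds $t$ from above by $j/(n-2m-1)$, and then $s$ by $2t$. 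Consequently only finitely many pairs $(s,t)$ contribute to $\HH_j$, so the product commutes with taking homology and equals the corresponding direct sum. The pairs with $s=0$ or $t=0$ contribute only the trivial summand accounting for the basepoint of $\Ebar_c(\R^m,\R^n)$; passing from $\HH$ to $\widetilde\HH$ discards this contribution and leaves exactly the index set $1\le s\le 2t$, giving the first isomorphism of the corollary.

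For (b), I would simply invoke~\eqref{eq: calculation of Rmod}, which asserts
\begin{equation*}
\HH^k\left(\HH\HH^{m,n}_{s,t}\right)\cong \HH_{(n-1)t-(m-1)s-k}\left(\hRmod\left(Q^m_s,\hat\HH^n_t\right)\right).
\end{equation*}
Reindexing by $k=(n-1)t-(m-1)s-*$ produces the second isomorphism in the statement of the corollary.

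There is no real obstacle here, since the corollary is essentially a bookkeeping consequence of previously established results; the only step that requires genuine content is the product-to-sum reduction, which rests on the numerical inequalities encoded in Remark~\ref{remark: relevant degrees} together with the strict inequality $n>2m+1$. The role of this strict inequality is precisely to make the coefficient $n-2m-1$ of $t$ positive, which is what yields the finiteness of contributions to each homological degree.
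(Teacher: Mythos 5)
Your proposal is correct and follows essentially the same route as the paper's argument (given in Remark~\ref{remark: relevant degrees}, which precedes the corollary): use the vanishing bounds on $\HH\HH^{m,n}_{s,t}$ and the inequality $s\le 2t$ to show only finitely many $(s,t)$ contribute in each homological degree, convert the product to a sum, discard the $(s,t)=(0,0)$ term when passing to reduced homology, and reindex via~\eqref{eq: calculation of Rmod}. The explicit derivation of the lower bound $(n-2m-1)t\le j$ from $(n-3)t-(m-1)s\le j$ and $s\le 2t$ (using $m\ge 1$) is a welcome expansion of the paper's terse ``it follows easily.''
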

\end{remark}
\begin{examples}
When $s=t=0$, $\HH\HH^{m,n}_{s,t}$ is the trivial complex which has $\Q$ in degree zero and nothing else. It detects the unreduced zero-dimensional homology of $\Ebarc(\R^m, \R^n)$. In all other cases $\HH\HH^{m,n}_{s,t}$ is non-trivial only if $s,t>0$. Furthermore it follows from Remark~\ref{remark: relevant degrees} that $\HH\HH^{m,n}_{s,t}$ is non-zero only if $s\le 2t$.

Let us consider the case $s=1$. Clearly, in this case $s\le t+1$ and, again by remark~\ref{remark: relevant degrees}, the Koszul complex  $\HH\HH^{m,n}_{1,t}$ is non-trivial only in degrees $t+1\le k\le 2t$. The non-zero portion of  $\HH\HH^{m,n}_{1,t}$ has $t$ terms, and it has the following form
$$0\leftarrow L_{t+1}\otimes L_{t+1}[m\!-\!(n\!+\!1\!)t\!-\!1]/_{\Sigma_{t+1}}\longleftarrow\cdots\longleftarrow L_{2t}\otimes L_2^{\otimes t}[m\!-\!(n\!+\!2)t]/_{\Sigma_t\wr\Sigma_{2}}\leftarrow 0 .$$
For a general $k$ the $k$-th term of $\HH\HH^{m,n}_{1,t}$ has the form
$$\left(\bigoplus_{[\beta]\in \,{\widehat\sur(k, k-t)/_{\Sigma_{k-t}}}} L_k\otimes L_\beta[m-nt-k]\right)_{\Sigma_k}.$$
This is so because $\sur(k,1)$ consists of just one point, and there is just one partition of $k$ with $1$ component. Thus the sum is indexed by irreducible partitions of $\{1,\ldots, k\}$ of excess $t$. When $k=t+1$ there is one such partition, namely the partition with one component. At the other extreme, when $k=2t$, there is again just one type of irreducible partition of excess $t$, namely the partition of type $2-2-\ldots-2$. We believe that partitions of this type are  related to ``chord diagrams'', familiar from knot theory.

In particular, when $t=1$ the complex $\HH\HH^{m,n}_{1,1}$ has only one (possibly) non-zero term, corresponding to $k=2$. The non-zero term is
$$(L_2\otimes L_2[m-n-2])_{\Sigma_2}.$$
$L_2\otimes L_2[m-n-2]$ is really just $\Q$ concentrated in dimension $m-n$. It is easy to see that the action of $\Sigma_2$ on $L_2\otimes L_2[m-n-2]$ is trivial if $n$ is even and is multiplication by $-1$ if $n$ is odd. It follows that if $n$ is odd then  $\HH\HH^{m,n}_{1,1}$ is the zero complex. If $n$ is even then $\HH\HH^{m,n}_{1,1}$ consists of a single copy of $\Q$ in degree $k=2$ and of internal dimension $m-n$. It follows that when $n$ is even $\HH\HH^{m,n}_{1,1}$ contributes a class of dimension $n-m-2$ to $\HH\left(\Ebarc(\R^m, \R^n);\Q\right)$.

Let us also consider briefly the case $s=1$, $t=2$. In this case, the complex $\HH\HH^{m,n}_{1,2}$ has two non-zero terms, corresponding to $k=3, 4$. It has the following form
$$0\leftarrow (L_3\otimes L_3[m-2n-3])_{\Sigma_3} \longleftarrow( L_4\otimes L_2\otimes L_2[m-2n-4])_{\Sigma_2\wr\Sigma_2}\leftarrow 0.$$

The complexity of $\HH\HH^{m,n}_{s,t}$ grows rapidly with $s$ and $t$.
\end{examples}

\section{A complex of forests}\label{section: forests} In this section we will show how the Koszul complex $\HH\HH^{m,n}_{s,t}$ (see~\eqref{eq: dual Koszul complex}) can be described more explicitly as a complex of forests. Recall that the direct sum over $s$ and $t$ of all these complexes computes the rational cohomology of $\Ebarc(\R^m,\R^n)$, see Corollary~\ref{corollary: final homological formula}. The internal grading of each summand $\HH\HH^{m,n}_{s,t}$ is $(m-1)s-(n-1)t$; the total grading, which is the sum of internal grading plus grading $k$,  is minus the homological grading of the space of embeddings.

The basic ingredient in the construction of $\HH\HH^{m,n}_{s,t}$ is the graded module (concentrated in degree $k-1$) $L_k=\widetilde \HH(T_k)$. It is well known that $L_k$ can be described as the free $\Z$-module spanned by trees with vertex set $\{1,\ldots, k\}$, modulo the Arnold relation.
More precisely, let $\calA_k$ be the free graded commutative algebra on $k \choose 2$ one-dimensional generators. Thus $\calA_k$ has a one-dimensional generator $u_{i,j}$ for each unordered pair $\{i,j\}$ of distinct indices between $1$ and $k$ (so $u_{i,j}=u_{j,i}$). Note that the generators anti-commute and their squares are zero. Let $I$ be the ideal of $\calA_k$ generated by the {\it Arnold relation}
$$u_{i,j}u_{j,k}+u_{j,k}u_{k,i}+u_{k,i}u_{i,j}=0$$ for all $i,j,k.$
$I$ is generated by homogeneous elements so $\calA_k/I$ is a graded algebra. It is well known that there is an isomorphism of graded algebras~\cite{Arn,Coh}
$$\calA_k/I\cong \HH^*(\mathrm{C}(k, \R^2)).$$
This has the following consequence, which is also well-known.
\begin{proposition}
$L_k$ is naturally isomorphic to the homogeneous degree $k-1$ part of $\calA_k/I$. Moreover, the degree $k-1$ part is generated by monomials of the form $u_{i_1,j_1}\cdots u_{i_{k-1},j_{k-1}}$ for which the graph with vertex set $\{1,\ldots,k\}$ and edge set $\{i_1,j_1\},\ldots,\{i_{k-1},j_{k-1}\}$ is connected and acyclic (i.e., a tree).
\end{proposition}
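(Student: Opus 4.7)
My plan is to prove both assertions together by establishing the spanning statement first, and then using a dimension count to upgrade the resulting surjection into a natural isomorphism. Both sides have rank $(k-1)!$: for $L_k$, this is the rank stated immediately after the definition of $T_k$ (and follows from Björner's shellability of the partition lattice, which yields $\partial|\calP_k|\simeq \bigvee_{(k-1)!} S^{k-3}$); for $(\calA_k/I)_{k-1}$, this is the top Betti number of $C(k,\R^2)$ via Arnold's theorem $\calA_k/I\cong \HH^*(C(k,\R^2))$, which was already cited immediately above the proposition.

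For the spanning statement, I would argue as follows. Fix a monomial $\mu=u_{i_1,j_1}\cdots u_{i_{k-1},j_{k-1}}$ of degree $k-1$ and let $G$ be its underlying graph on $\{1,\ldots,k\}$. If some edge of $G$ is repeated then $\mu=0$ since $u_{i,j}^2=0$. Otherwise $G$ has exactly $k-1$ distinct edges, and if $G$ is not a tree it must contain a cycle $v_0,v_1,\ldots,v_p=v_0$. Choose three consecutive vertices $v_{j-1},v_j,v_{j+1}$ on this cycle and multiply the Arnold relation $u_{v_{j-1},v_j}u_{v_j,v_{j+1}}+u_{v_j,v_{j+1}}u_{v_{j+1},v_{j-1}}+u_{v_{j+1},v_{j-1}}u_{v_{j-1},v_j}=0$ by the remaining $k-3$ factors. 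This rewrites $\mu$ as a signed sum of two monomials, each with strictly shorter longest cycle (the diagonal $\{v_{j-1},v_{j+1}\}$ is used to short-cut the cycle). Iterating this procedure on an appropriate measure of complexity (e.g.\ the number of edges in a shortest cycle times the total cycle-edge count) terminates with a sum of tree monomials.

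For the isomorphism $L_k\cong(\calA_k/I)_{k-1}$, I would produce a natural surjective map $\Phi\colon (\calA_k/I)_{k-1}\twoheadrightarrow L_k$. Since $\partial|\calP_k|$ is a wedge of $(k-1)!$ copies of $S^{k-3}$, its top homology $\widetilde\HH_{k-3}(\partial|\calP_k|)\cong L_k$ has a distinguished ``tree basis'' (indexed, say, by labeled binary trees with a fixed root, where a tree encodes a maximal chain of set-theoretic partitions refining step by step). Dually, send a tree monomial $\prod_{\{i,j\}\in E(T)} u_{i,j}$ to the fundamental class of the corresponding maximal chain in $\calP_k$; the key verification is that the Arnold relation in $\calA_k/I$ goes to the boundary relations among maximal chains coming from coface operators in the nerve of $\calP_k$, which are precisely the ``three-term'' identities among partition refinements. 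Since the tree monomials span the source by the previous step and their images span the target, $\Phi$ is surjective; by the dimension count above it is an isomorphism.

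The main technical obstacle will be the bookkeeping of signs and orderings so that $\Phi$ is well-defined, natural in $\Sigma_k$, and actually sends the Arnold relation to the chain-boundary relation rather than some twisted variant. A convenient way around this is to recognize both sides as twisted forms of $\mathrm{Lie}(k)$: $L_k$ is the sign-twist of $\mathrm{Lie}(k)$ by the standard computation of partition-poset homology, and $(\calA_k/I)_{k-1}$ is the same representation as the top degree of the Cohen/Poisson operad, and then match explicit bracket-by-bracket generators on each side.
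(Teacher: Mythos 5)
The paper's own proof is essentially one sentence: it chains the natural isomorphisms $L_k\cong\HH_{k-1}(T_k)\cong\HH_{k+1}(S^{2k}/\Delta^kS^2)\cong\HH^{k-1}(\mathrm{C}(k,\R^2))$, invokes Arnold's theorem identifying the last group with $(\calA_k/I)_{k-1}$, and cites Cohen--Taylor for the tree-basis statement. Your proposal takes a genuinely different, bottom-up combinatorial route: prove spanning directly from the Arnold relation, then upgrade to a natural isomorphism via an explicit map $\Phi$ into partition-poset homology plus a $(k-1)!$-dimension count. The paper's route is short precisely because it exploits the already-established machinery (the partition-complex / configuration-space duality, built up in Sections~\ref{section: Koszul dual}--\ref{section: configurations spaces revisited}), while your route would produce a self-contained proof usable without that apparatus. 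Your cycle-reduction argument for spanning is the standard one and is fine; the one caveat is that the termination argument should be stated more carefully (e.g.\ induct on the total number of edges lying on cycles, noting that each Arnold move either kills the monomial via a repeated edge or strictly decreases this count), but this is cosmetic.

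Two genuine issues remain. First, there is an off-by-one error in the degree: since $|\calP_k|$ is a cone, $T_k=|\calP_k|/\partial|\calP_k|\simeq\Sigma\,\partial|\calP_k|$, so $\partial|\calP_k|\simeq\bigvee_{(k-1)!}S^{k-2}$, not $S^{k-3}$, and $L_k\cong\widetilde\HH_{k-2}(\partial|\calP_k|)$, not $\widetilde\HH_{k-3}$. (You may be conflating $\partial|\calP_k|$ with the order complex of the open interval $(\hat 0,\hat 1)$; the latter is a wedge of $(k-1)!$ copies of $S^{k-3}$, and $\partial|\calP_k|$ is its unreduced suspension.) This is harmless for the dimension count but would trip up the construction of $\Phi$. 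Second, and more substantively, the map $\Phi$ as described is underspecified: a tree on $\{1,\dots,k\}$ does not single out one maximal chain of $\calP_k$ but rather the $(k-1)!$ chains obtained by ordering its edges, so the correct target of a tree monomial is the signed sum of the corresponding relative fundamental chains (a standard construction, but the signs must be pinned down to verify that the Arnold relation maps to a boundary). The proposed shortcut of ``recognizing both sides as twisted forms of $\mathrm{Lie}(k)$'' only yields an abstract isomorphism of $\Sigma_k$-representations, which is weaker than the natural (functorial-in-the-set) isomorphism the proposition asserts; you would still have to exhibit a specific equivariant map and check it is nonzero. So the skeleton of the argument is sound and is a legitimate alternative to the paper's citation-based proof, but it is not yet complete as written.
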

Indeed, one way to obtain the first part of the proposition is to use the natural isomorphisms
$$L_k\cong \HH_{k-1}(T_k)\cong\HH_{k+1}(S^{2k}/\Delta^kS^2)\cong \HH^{k-1}(\mathrm{C}(k, \R^2))$$
and the identification of $\calA_k/I$ with the cohomology of the configuration space $\mathrm{C}(k, \R^2)$. The second part of the proposition is proved, for example, in~\cite{CT:RCCS}.

Next we describe the Lie-comodule structure on the sequence $L_1, \ldots, L_k,\ldots$ in terms of this ``tree basis''. Recall that  for every surjective function $\beta\in\sur(k,k-1)$ there is a map
$T_k\longrightarrow \Sigma T_{k-1}$ associated with $\beta$. This map induces a homomorphism in homology
$$L_k\longrightarrow \Z\langle v\rangle \otimes L_{k-1}.$$ Here $v$ is a one-dimensional generator and $\Z\langle v\rangle$ is the free abelian group with generator $v$. To describe this homomorphism, we need to tell what it does on a generic element of the form $u_{i_1,j_1}\cdots u_{i_{k-1},j_{k-1}}$.
Since $\beta$ is a surjective function from the standard set with $k$ elements to the one with $k-1$ elements, there exist two elements $i, j$ such that $\beta(i)=\beta(j)$ and otherwise $\beta$ is a bijection. Roughly speaking the effect of $\beta$ on the tree basis for homology is determined by the following rule: if $\{i,j\}$ is not among the edges of a tree, then this tree is sent to zero. If $\{i,j\}$ is among the edges, then the tree is sent to another tree, obtained by contracting the edge $\{i,j\}$, and the edge $\{i,j\}$ is mapped to the suspension coordinate. More precisely, the rule is as follows: if $\beta(i)\ne\beta(j)$ then $u_{i,j}$ is sent to $u_{\beta(i),\beta(j)}$. If $\beta(i)=\beta(j)$ then $u_{i,j}$ is sent to $v$. This induces the following function on the generators of $L_k$: if the unordered pair $\{i,j\}$ is not among the pairs $\{i_1,j_1\},\ldots \{i_{k-1},j_{k-1}\}$, then the element $u_{i_1,j_1}\cdots u_{i_{k-1},j_{k-1}}$ is sent to zero. Otherwise, suppose that $\{i,j\}=\{i_l,j_l\}$. Then the element $u_{i_1,j_1}\cdots u_{i_{k-1},j_{k-1}}$ is sent to $$ u_{\beta(i_1),\beta(j_1)}\cdots u_{\beta(i_{l-1}),\beta(j_{l-1})} v u_{\beta(i_{l+1}),\beta(j_{l+1})}\cdots u_{\beta(i_{k-1}),\beta(j_{k-1})},$$ which is the same as $$(-1)^{l-1}v\otimes u_{\beta(i_1),\beta(j_1)}\cdots u_{\beta(i_{l-1}),\beta(j_{l-1})}u_{\beta(i_{l+1}),\beta(j_{l+1})}\cdots u_{\beta(i_{k-1}),\beta(j_{k-1})}.$$
One can desuspend the above homomorphism, to obtain a homomorphism
$$\Z\langle v^{-1}\rangle \otimes L_k\longrightarrow L_{k-1}$$
where $v^{-1}$ is a generator of degree $-1$. This homomorphism sends $v^{-1}\otimes u_{i_1,j_1}\cdots u_{i_{k-1},j_{k-1}}$ to $(-1)^{l-1}u_{\beta(i_1),\beta(j_1)}\cdots\hat u_{\beta(i_l),\beta(j_l)}\cdots u_{\beta(i_{k-1}),\beta(j_{k-1})}$ if $\{i,j\}=\{i_l,j_l\}$, and to zero if $\{i, j\}$ is not equal to any of the unordered pairs $\{i_1,j_1\},\ldots, \{i_k,j_k\}$.

Now let $\alpha\in\sur(k,s)$ be a surjective function. Recall that we define $L_\alpha=L_{\alpha^{-1}(1)}\otimes\cdots\otimes L_{\alpha^{-1}(s)}$. It follows that the module $L_\alpha$ is generated by {\it forests} with vertex set $\{1,\ldots, k\}$, whose connected components are labeled by $1,\ldots, s$, subject to the Arnold relation. Finally
recall that the $k$-th term of the complex $\HH\HH^{m,n}_{s,t}$ is isomorphic to the following graded rational vector space
$$\Sigma^{-k}\left(\bigoplus_{(\alpha\beta) \in \sur(k, s)\times \widehat\sur(k, k-t)} L_\alpha \otimes L_\beta \otimes \Z[ms]\otimes \Q[-nk]\otimes \Q[n(k-t)]\right)_{\Sigma_k\times \Sigma_s\times \Sigma_{k-t}} .$$
It follows that the $k$-th term of the Koszul complex $\HH\HH^{m,n}_{s,t}$ can be described as a quotient (by the Arnold relation) of a direct sum of vector spaces, indexed by equivalence classes (with respect to the action of $\Sigma_k$) of pairs of forests with vertex set $\{1,\ldots, k\}$, where the first forest has $s$ components and the second forest has $k-t$ components, all bigger than a point. Each summand is generated by equivalence classes (with respect to the action of the stabilizer group of the pair of forests) of monomials that are products of the following generators:
\begin{itemize}
\item a one-dimensional generator for each edge of the two forests,
\item $s$ generators of dimension $m$ corresponding to connected components of the first forest (and arising from $\Z[ms]$),
\item $k$ generators of dimension $-n$ corresponding to vertices and $k-t$ generators of dimension $n$ corresponding to connected components of the second forest to account for $\Q[-nk]$ and $\Q[n(k-t)]$.
\item and finally $k$ more generators of dimension $-1$, to account for the $k$-fold de-suspension. Note that the group $\Sigma_k$ does not permute these generators.
\end{itemize}
The order of generators can be interchanged as prescribed by graded commutativity.

The boundary homomorphism is defined as follows: fix a pair of forests with vertex set $\{1,\ldots, k\} $(representing an equivalence class of such pairs), where the two forests have $s$ components and $k-t$ non-singleton components respectively. The boundary homomorphism on the summand corresponding to this pair is given by a sum of $k\choose 2$ homomorphisms, corresponding to all the ways to glue together two elements of $\{1,\ldots, k\}$. The only summands that are not zero are the ones that contract an edge in the first forest (and so preserve the number $s$ of components in this forest) and glue together two components of the second forest (and so preseve the number $t$ of edges of the second forests). The contracted edge is used to cancel one of the de-suspension coordinates, as explained above.

\end{document}